\documentclass[12pt]{amsart}

\usepackage[T1]{fontenc}
\usepackage{lmodern}
\usepackage{amsmath,amssymb,mathtools}
\usepackage{enumitem}
\usepackage{microtype}
\usepackage[colorlinks=true,linkcolor=blue,citecolor=blue,urlcolor=blue]{hyperref}
\hypersetup{
  pdftitle={Conformal welding of chord-arc curves},
  pdfauthor={Tailiang Liu},
  pdfsubject={Conformal weldings, analytic projections on BMOA, and chord-arc curves},
  pdfkeywords={BMOA, conformal welding, chord-arc curve, Faber operator, strongly quasisymmetric homeomorphism}
}

\allowdisplaybreaks

\newtheorem{theorem}{Theorem}[section]
\newtheorem{proposition}[theorem]{Proposition}
\newtheorem{lemma}[theorem]{Lemma}

\theoremstyle{definition}

\theoremstyle{remark}

\newcommand{\D}{\mathbb D}
\newcommand{\Ds}{\mathbb D^{*}}
\newcommand{\T}{\mathbb T}
\newcommand{\C}{\mathbb C}
\newcommand{\R}{\mathbb R}
\newcommand{\BMO}{\mathrm{BMO}}
\newcommand{\BMOA}{\mathrm{BMOA}}
\newcommand{\VMO}{\mathrm{VMO}}
\newcommand{\SQS}{\mathrm{SQS}}
\newcommand{\Ran}{\operatorname{Ran}}
\newcommand{\Ker}{\operatorname{Ker}}
\newcommand{\dist}{\operatorname{dist}}
\newcommand{\diam}{\operatorname{diam}}
\newcommand{\length}{\operatorname{length}}

\newcommand{\Faber}{\mathcal F}
\newcommand{\Bplus}{\mathcal B_{+}}
\newcommand{\Bminus}{\mathcal B_{-}}
\newcommand{\BplusGamma}{\mathcal B_{+}(\Gamma)}
\newcommand{\BminusGamma}{\mathcal B_{-}(\Gamma)}
\newcommand{\Bloch}{B}

\title[Conformal welding of chord-arc curves]
{Conformal welding of chord-arc curves}
\author{Tailiang Liu}
\address{School of Mathematics and Physics, Jiangsu University of Technology, Changzhou, Jiangsu, P. R. China}
\email{Ltlmath@jsut.edu.cn}
\thanks{This work was supported by the National Natural Science Foundation of China under Grants Nos. 12401095}
\date{}
\subjclass[2020]{Primary 30C62, 30H35; Secondary 30E20}
\keywords{BMOA, conformal welding, chord-arc curve, analytic projection, Faber operator, strongly quasisymmetric homeomorphism}

\begin{document}

\begin{abstract}
We study the relation between the geometric properties of a chord-arc curve and its conformal welding. Let $h$ be the conformal welding of a closed  quasicircle $\Gamma$. 
By Jones’s theorem, the pull-back operator $C_h u=u\circ h$ is bounded on BMO if and only if $h$ corresponds to
the welding of a Bishop-Jones quasicircle, equivalently, $h$ is strongly quasisymmetric.  Let $A_h$ denote the analytic projection of $C_h$.
We prove that  $A_h$ is a bounded isomorphism on BMOA if and only if $\Gamma$ is a chord-arc curve. More strongly, the same characterization holds if invertibility is replaced by Fredholmness. This gives an intrinsic conformal-welding characterization of chord-arc curves and a complete geometric answer to the invertibility problem posed by Semmes in the 1980s. Furthermore, we establish an exact correspondence between the inverse of $A_h$ and the classical Faber integral operator, showing that for a rectifiable quasicircle, the Faber operator is a bounded isomorphism on BMOA if
and only if the curve satisfies the chord-arc condition.
\end{abstract}

\maketitle

\section{Introduction and main results}\label{sec:introduction}
 Let $\mathbb{D}$ denote the open unit disk, $\mathbb{D}^* = \overline{\mathbb{C}} \setminus \overline{\mathbb{D}}$ its exterior in the Riemann sphere, and $\mathbb{T} = \partial\mathbb{D}$ the unit circle. Let $\Gamma$ be a bounded Jordan curve in the complex plane $\mathbb{C}$, and denote by $\Omega_{+}$ and $\Omega_{-}$ its bounded and unbounded complementary components.
Let $g: \mathbb{D} \to \Omega_+$ and $f: \mathbb{D}^* \to \Omega_-$ be conformal mappings, with $f(\infty)=\infty$. 
 By Carath\'eodory's theorem, both maps extend homeomorphically to the boundary $\mathbb{T}$. Their boundary correspondence\begin{equation}\label{eq:welding-intro-new}h=f^{-1}\circ g\big|_{\mathbb{T}}\end{equation}is called the \emph{conformal welding} of $\Gamma$. A basic problem of conformal welding asks how geometric properties of $\Gamma$ are reflected in analytic properties of $h$. The classical quasi-conformal mapping theory shows that  $\Gamma$ is a quasicircle if and only if its welding is quasisymmetric (see \cite{Ahlfors,BA}). Here, a homeomorphism $h$ on $\mathbb{T}$ is quasisymmetric and belongs to the class $\text{QS}(\mathbb{T})$  if there exists a positive constant $C$ such that for any interval $I\subset \mathbb{T}$ of length less than $\pi$, we have$$\vert{}h(2I)\vert{}\leq C\vert{}h(I)\vert{},$$ where $2I$ is the interval with the same center as $I$ but $\vert{}2I\vert{}=2\vert{}I\vert{}$. The next natural classes to consider are chord-arc curves,  which was asked by Jerison and Kenig (see \cite[p.246]{Jk}). A rectifiable Jordan curve is called \emph{chord-arc curve} if there is $M\ge1$ such that, for every $z_1,z_2\in\Gamma$, the shorter subarc $\Gamma(z_1,z_2)$ satisfies\begin{equation}\label{eq:chord-arc-definition}\text{length}\big(\Gamma(z_1,z_2)\big)\le M|z_1-z_2|.\end{equation}
A   homeomorphism $h$  is called strongly quasisymmetric  in the sense of Semmes \cite{Semmes} and belongs to the class SQS$(\mathbb{T})$, if there exist $0<C_{1}, C_{2}<\infty$ such that for all intervals $I$ in $\mathbb{T}$  and all measurable subsets $A$ of $I$ we have
\[\frac{|h(A)|}{|h(I)|}<C_{1}\Big(\frac{|A|}{|I|}\Big)^{C_{2}}.\] Equivalently, $h\in\text{SQS}(\mathbb{T})$   if and only if $h$ is locally absolutely continuous so that $h'$ belongs to the class $A_{\infty}$ weights  introduced by Muckenhoupt (see \cite{CF}), in particular, $\log h'\in$ BMO$(\mathbb{T})$,  the space  of functions on
$\mathbb{T}$ of bounded mean oscillation. 
Lavrentiev's theorem implies that, for a chord-arc curve, the boundary derivatives of the conformal maps are $A_\infty$ weights. It follows from the welding $h$ is strongly quasisymmetric. The converse fails dramatically. Semmes \cite{SemmesCounterexample}constructed a bi-Lipschitz welding whose welded Jordan curve is not locally rectifiable (see also Bishop's \cite{Bishop}). Hence the condition
\[
 h\in\SQS(\T),\qquad\text{equivalently}\qquad
 h\text{ is absolutely continuous and }h'\in A_\infty,
\]
contains the chord-arc class but does not characterize it. In fact, the strongly quasisymmetric weldings correspond precisely to Bishop--Jones quasicircles (see \cite{AstalaZinsmeister,BJ, FeffermanKenigPipher,Semmes})
In the small-constant regime, David's theorem \cite{David}, together with the work of Lavrentiev and Pommerenke \cite{PommerenkeSchlichte}, gives a quantitative correspondence between chord-arc curves with small constant and weldings for which $\|\log h'\|_{\BMO}$ is small. Finer subclasses are now much better understood. For example, asymptotically smooth and Weil--Petersson curves admit descriptions through VMO, fractional Sobolev spaces; see \cite{LiuShenVMO, ShenWP, ShenTang,ShenWei,ShenWu}.
The global chord-arc locus remains subtler, concerning the connectedness of the chord-arc space in the BMO-Teichm\"uller space and the characterization of its corresponding complex  dilatation; see \cite{AstalaGonzalez,AstalaZinsmeister,Bishop2,CoifmanMeyer}.

The operator considered here arises from the BMO pull-back
\[
 C_hu=u\circ h.
\]
For $h\in\SQS(\T)$, the pull-back $C_h$ is a bounded isomorphism on BMO \cite{Jones}.

Decompose boundary BMO into its analytic and anti-analytic trace spaces,
\[
 \BMO(\T)=\Bplus\dotplus\Bminus,
\] where $\Bplus$ consists of the boundary values on $\mathbb{T}$ of analytic functions in $\text{BMOA}(\mathbb{D})$, and $\Bminus$ represents the boundary values of their anti-analytic counterparts $\overline{\text{BMOA}}(\mathbb{D})$.
Let $P_{+}$ and $P_{-}$ be the corresponding Riesz projections. The two  projected composition operators are
\begin{equation*}
\label{eq:Ah-def} A_h=
P_{+}C_h\big|_{\Bplus}
\quad\text{and}\quad
B_h=P_{-}C_h\big|_{\Bplus}.
\end{equation*}

Semmes asked when the operator  $A_h$ is invertible \cite[p.424]{SemmesQuestion}. Bishop's counterexample \cite{Bishop} shows that strong quasisymmetry alone does not prevent a nontrivial kernel, while David's theorem settles the problem when $\|\log h'\|_{\BMO}$ is sufficiently small.   Fan-Hu-Shen conjectured in \cite{FanHuShen} that the condition $\log h' \in \VMO$, which geometrically means that the curve $\Gamma$ is asymptotically smooth, implies the invertibility of this operator.  Furthermore, they also conjectured that the operator $B_h$ is compact if and only if the curve is asymptotically smooth, which have be discussed  in  \cite{LiuAsymptotically}. 

Our first purpose is to give a complete result for chord-arc curves
\begin{theorem}[Main characterization]\label{thm:main-characterization}
Let $\Gamma$, $f$, $g$ and $h$ be as in \eqref{eq:welding-intro-new}, and assume $h\in\SQS(\T)$. Then the following are equivalent:
\begin{enumerate}[label=\textup{(\roman*)}]
\item $\Gamma$ is a chord-arc curve;
\item $A_h:\Bplus\to\Bplus$ is a bounded isomorphism;
\item $A_{h^{-1}}:\Bplus\to\Bplus$ is a bounded isomorphism.
\end{enumerate}
More sharply, if there is $c>0$ such that
\begin{equation}\label{eq:lower-Ahinv}
 \|A_{h^{-1}}\varphi\|_{\BMOA}
 \ge c\|\varphi\|_{\BMOA},
 \qquad \varphi\in\Bplus,
\end{equation}
then $\Gamma$ is chord-arc.
\end{theorem}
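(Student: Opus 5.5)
The plan is to identify $A_{h^{-1}}$ and $A_h$ with Faber-type operators built from $\Gamma$, and then read chord-arc geometry off the boundedness from below of these operators.

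\textbf{Conformal-map interpretation.} Fix $\varphi\in\Bplus$ and extend it analytically to $\D$. Transplant it to $\Gamma$ via $\varphi\circ g^{-1}$. Since $h\in\SQS(\T)$, Jones's theorem shows $C_{h^{-1}}\varphi=\varphi\circ h^{-1}$ lies in $\BMO(\T)$. Transplanting $\psi\in\BMO(\T)$ to $\Gamma$ by $\psi\circ f^{-1}$, we see that the Riesz decomposition of $C_{h^{-1}}\varphi$ corresponds to the Cauchy-integral splitting of $F=\varphi\circ g^{-1}$ on $\Gamma$ into a part holomorphic in $\Omega_+$ and a part holomorphic in $\Omega_-$. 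Concretely, $P_-C_{h^{-1}}\varphi$ corresponds to the part of $F$ extending to $\Omega_-$, namely the part vanishing at $\infty$. Hence $A_{h^{-1}}\varphi$ is the pullback by $f$ of the $\Omega_-$-component of the Cauchy transform of $\varphi\circ g^{-1}$. The analogous formula with the roles of $f$ and $g$ exchanged identifies $A_h$. This gives the exact Faber-operator correspondence mentioned in the abstract, and it is how the Faber statement will be proved.

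\textbf{(i)$\Rightarrow$(ii),(iii).} For a chord-arc curve, Lavrentiev gives $|g'|,|f'|\in A_\infty$. The Cauchy integral is bounded on $\BMO(\Gamma,ds)$ (David/Coifman--McIntosh--Meyer), and composition with $f,g$ identifies $\BMO(\Gamma)$ with $\BMO(\T)$. This yields boundedness of the Cauchy projections. To show $A_h$ is invertible, I would exhibit the inverse explicitly. Writing $u=\varphi\circ f^{-1}$ on $\Gamma$ and projecting onto $\Omega_+$, the composite $P_+\!\left((\cdot)\circ g\right)$ is a left and right inverse of $A_h$ modulo the projection onto the $\Omega_-$ part. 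The key identity is $\Ker A_h=0$: if $P_+C_h\varphi=0$, then $\varphi\circ h$ is anti-analytic, so $\varphi\circ f^{-1}$ extends holomorphically to both $\Omega_-$ and $\Omega_+$ with BMO (hence Smirnov-class) boundary values. Chord-arc curves are Smirnov, so this function is constant by Liouville. Surjectivity follows symmetrically. Boundedness of the inverse then follows from the open mapping theorem, and (iii) is the same argument with $f\leftrightarrow g$.

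\textbf{(ii) or (iii), and more sharply \eqref{eq:lower-Ahinv}, $\Rightarrow$(i).} This is the main obstacle. I would test \eqref{eq:lower-Ahinv} on $\varphi=\log g'$, or on functions like $\varphi_a=\log(1-\bar a z)$, which have BMOA norm about $1$ and concentrate at $a\in\D$. By the Faber identification, $A_{h^{-1}}\varphi$ relates to $\log f'$ through the jump formula for $\log|{\rm derivative}|$. The goal is an inequality
\[
\Bigl\|\log \tfrac{(f^{-1})'}{\,\cdot\,}\Bigr\|
\]
controlling rectifiability. More concretely, I would try to show the lower bound forces $\log g'\in\BMOA$ with $g'\in H^1$, i.e.\ $\Gamma$ rectifiable. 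The route would be to write $\log g'$ as the image under the inverse of a function expressed through $\log h'$ and $\log f'$, both in BMO. Once $\Gamma$ is rectifiable with $\log g'$ and $\log f'$ in BMO and $h\in\SQS$, a Bishop--Jones-type criterion should give $|g'|\in A_\infty$; together with the quasicircle property, this yields chord-arc by Jerison--Kenig/Pommerenke. The delicate point is passing from an \emph{a priori} inequality to membership of $\log g'$, since $\log g'$ is not known to be BMOA in advance. I would handle this by approximating $\Gamma$ by analytic curves $\Gamma_r=g(r\T)$, obtaining uniform lower bounds for the corresponding operators by a perturbation/compactness argument, deriving uniform chord-arc constants, and passing to the limit $r\to1$.
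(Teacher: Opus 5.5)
\textbf{The converse direction has no working mechanism.} This is the part that carries the content of the theorem. To use \eqref{eq:lower-Ahinv} you need inputs $\psi$ whose images $A_{h^{-1}}\psi$ you can compute and bound, and whose size measures arclength. Neither $\log g'$ nor $\log(1-\bar a z)$ is such an input.

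Moreover, $\log g'\in\BMOA$ is not ``unknown in advance''. Under $h\in\SQS$ it holds automatically, since strongly quasisymmetric weldings are exactly those of Bishop--Jones quasicircles. It also does not give $g'\in H^1$: Semmes' bi-Lipschitz welding produces a non-rectifiable curve. The boundary identity $\log g'=\log f'\circ h+\log h'+\cdots$ that you want to invert presupposes boundary differentiability of $f$ and $g$, which is exactly the rectifiability being sought.

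Your closing step fails as well. Rectifiability together with $h\in\SQS$ does not in general give $|g'|\in A_\infty$. For example, glue rescaled finite-generation truncations of a non-rectifiable Bishop--Jones curve into a smooth curve, at scales that keep the total length finite. This gives a rectifiable, non-chord-arc quasicircle with strongly quasisymmetric welding. So the hypothesis must itself produce the scale-invariant bound $\length g(J)\lesssim\diam g(J)$, not merely finite length. The approximation by $g(r\T)$ cannot supply this bound. The weldings of those curves inherit no lower bound from $A_{h^{-1}}$, and uniform chord-arc constants for them is exactly what is to be proved.

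The missing idea is the paper's logarithmic jump test.
\begin{itemize}
\item Take an ordered partition $z_j=f(\zeta_j)$ of a subarc, with unit chord directions $\tau_j$. Put $L^{\pm}=\frac{1}{2\pi i}\sum_j\overline{\tau_j}\log\frac{w-z_{j+1}}{w-z_j}$, with branches cut along the subarcs and normalized at $\infty$.
\item Then $\psi=L^+\circ g\in\Bplus$ and $v=L^-\circ f\in\Bminus$ satisfy $\psi\circ h^{-1}-v=u$ with $|u|\le1$.
\item Hence $A_{h^{-1}}\psi=P_+u$ is uniformly bounded, and \eqref{eq:lower-Ahinv} gives $\|\psi\|_{\BMOA}\le C/c$ independently of the partition.
\item The derivative $(L^+)'(w)=\frac{1}{2\pi i}\sum_j\frac{|z_{j+1}-z_j|}{(w-z_j)(w-z_{j+1})}$ has all its terms in a common cone for $w$ in a corkscrew ball at the scale of the arc.
\item The local Dirichlet-energy bound then gives $\sum_j|z_{j+1}-z_j|\lesssim\diam g(J)$. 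This yields rectifiability and, with bounded turning, the chord-arc inequality.
\end{itemize}

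\textbf{The direct implication uses the right ingredient, but the bookkeeping is off.} Transporting boundedness of the Cauchy integral on $\BMO(\Gamma)$ to $\T$ is the correct tool. However, your kernel argument is attached to the wrong operator. If $P_+(\varphi\circ h)=0$, then $\varphi\circ f^{-1}=(\varphi\circ h)\circ g^{-1}$ on $\Gamma$ is not the trace of a holomorphic function on either side, because $\varphi$ lives on $\D$ and $\varphi\circ h$ on $\Ds$.

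The two-sided Cauchy argument does work for $A_{h^{-1}}$. There $P_+(\varphi\circ h^{-1})=0$ gives $\varphi\circ g^{-1}=\Psi\circ f^{-1}$ on $\Gamma$, with the left side holomorphic in $\Omega_+$ and the right side in $\Omega_-$. One then passes to $A_h$ by Proposition~\ref{prop:inverse-symmetry}.

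Surjectivity is not a symmetric consequence of injectivity. It is the existence half of the decomposition $\BMO(\Gamma)=\BplusGamma\dotplus\BminusGamma$, pulled back by $U_f$.

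Finally, your Faber identification is reversed. The $\Omega_-$-part of the Cauchy transform of $\varphi\circ g^{-1}$ vanishes by Cauchy's theorem. The correct statement is $(A_{h^{-1}})^{-1}=C_g\Faber_f^{+}$.
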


This result provides a global conformal-welding characterization of chord-arc curves, resolving a question posed by Jerison and Kenig. It also gives a complete answer to Semmes’s invertibility problem. David’s small-BMO theorem emerges as a special case: when \(\log h'\) has sufficiently small BMO norm, the welding curve is chord-arc, which immediately implies the invertibility of \(A_h\). In particular, it settles the conjecture of Fan, Hu and Shen \cite{FanHuShen}: if \(\log h'\in\VMO\), then the associated curve is asymptotically smooth and hence chord-arc, so \(A_h\) is invertible.

 The proof extracts the missing geometry directly. For every finite ordered partition of a boundary subarc we construct a logarithmic jump function across the subarc whose derivative contains the  variation $|z_{j+1}-z_j|$. Estimate \eqref{eq:lower-Ahinv} gives a uniform BMOA bound independent of the partition, which allows us to control the variation at one fixed interior point, and hence establish rectifiability. Localizing the same functions to an interior comparison ball at the scale of the target arc gives the chord-arc estimate. 

It is worth noting that the proof method of the main theorem remains applicable, which allows us to obtain the stronger result below.
\begin{theorem}[Fredholm rigidity]\label{cor:fredholm-intro}
Under the hypotheses of Theorem~\ref{thm:main-characterization}, the
following conditions are also equivalent to the chord-arc property of $\Gamma$:
\begin{enumerate}[label=\textup{(\roman*)}]
\item $A_h$ is a Fredholm operator on $\Bplus$;
\item $A_{h^{-1}}$ is a Fredholm operator on $\Bplus$;
\item at least one of $A_h$ and $A_{h^{-1}}$ has closed range and a
finite-dimensional kernel.
\end{enumerate}
\end{theorem}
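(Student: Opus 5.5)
We reduce Theorem~\ref{cor:fredholm-intro} to Theorem~\ref{thm:main-characterization}. If $\Gamma$ is chord-arc, then by Theorem~\ref{thm:main-characterization} both $A_h$ and $A_{h^{-1}}$ are bounded isomorphisms. Isomorphisms are Fredholm of index zero, so (i), (ii) and (iii) all hold. Moreover (i)$\Rightarrow$(iii) and (ii)$\Rightarrow$(iii) are trivial. It therefore remains to show that (iii) forces $\Gamma$ to be chord-arc.

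\textbf{Case $A_{h^{-1}}$.} Suppose first that $A_{h^{-1}}$ has closed range and finite-dimensional kernel $K$. By the open mapping theorem there is a closed complement $X$ of $K$ in $\Bplus$ and a constant $c>0$ with
\[
\|A_{h^{-1}}\varphi\|_{\BMOA}\ge c\,\|\varphi\|_{\BMOA},\qquad \varphi\in X .
\]
Equivalently, $\|A_{h^{-1}}\varphi\|_{\BMOA}\ge c\,\|\varphi\|_{\BMOA}-C\|Q\varphi\|$ for all $\varphi$, where $Q$ is a finite-rank projection onto $K$. We then rerun the proof of the sharp part of Theorem~\ref{thm:main-characterization}. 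That proof tests \eqref{eq:lower-Ahinv} only on the logarithmic jump functions built from finite partitions of subarcs, localized at the scale of the target arc. The plan is to show that the finite-rank error is negligible on these test functions. We fix a basis $k_1,\dots,k_m$ of $K$ together with a bounded dual system of functionals $\ell_1,\dots,\ell_m$ (for instance given by pairing against functions in $H^1$). We then check that the test functions $\varphi_I$, normalized in BMOA, satisfy $\ell_j(\varphi_I)\to0$ as $|I|\to0$, or at least along sub-collections that can still be used. These functions are essentially localized logarithms, so they converge weak-$*$ to constants modulo the normalization. It follows that the lower bound \eqref{eq:lower-Ahinv} holds with constant $c/2$ on all test functions at sufficiently small scales. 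This yields local rectifiability and the chord-arc inequality \eqref{eq:chord-arc-definition} for all pairs $z_1,z_2$ with $|z_1-z_2|<\delta$. For large-scale pairs, the quasicircle property together with compactness, using the local rectifiability already obtained, upgrades the small-scale estimate to a global one.

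\textbf{Case $A_h$.} Suppose instead that $A_h$ has closed range and finite-dimensional kernel. We transfer this to $A_{h^{-1}}$ through the algebraic identity underlying Theorem~\ref{thm:main-characterization}. Since $C_{h^{-1}}C_h=I$, splitting into $P_\pm$ components gives
\[
A_{h^{-1}}A_h+P_+C_{h^{-1}}B_h=I \quad\text{on } \Bplus .
\]
We use the intertwining between $A_h$ and the Faber operator (the inverse relation announced in the abstract) to identify $\ker A_h$ with a finite-codimensional obstruction for $A_{h^{-1}}$. We also use the interpretation of $A_{h}$ via Cauchy integrals on $\Gamma$ versus on the welded curve for $h^{-1}$, namely the reflected curve $\Gamma^*$. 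Under inversion, $\Gamma^*$ is chord-arc if and only if $\Gamma$ is. This makes the roles of $h$ and $h^{-1}$ symmetric, so the first case applies directly to the welding $h^{-1}$ of $\Gamma^*$.

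\textbf{Main obstacle.} The hard step is the perturbation argument in the first case: showing that the partition-based test functions become asymptotically independent of any fixed finite-dimensional subspace, uniformly over all partitions. If weak-$*$ vanishing fails, the fallback is to translate the test functions. We would use $m+1$ disjoint, well-separated copies of the construction at each scale and take a linear combination annihilated by $\ell_1,\dots,\ell_m$. The BMO norms of such combinations are comparable to the maximum of the norms of the pieces, because the pieces are separated, so the main estimate survives with constants depending only on $m$.
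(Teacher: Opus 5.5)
Your architecture matches the paper's: reduce to $A_{h^{-1}}$, show that a finite-dimensional kernel cannot absorb the BMOA-normalized small-scale logarithmic tests, then rerun the localized length--diameter estimate and the small/large chord globalization. Your perturbative ``$c/2$ at small scales'' formulation is equivalent to the paper's contradiction argument in Theorem~\ref{thm:upper-semi-fredholm}.

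\textbf{The gap.} The step you call the main obstacle is the essential new ingredient beyond Theorem~\ref{thm:main-characterization}, and your heuristic for it does not prove it. The relevant functions are the inputs $\psi=L^+\circ g$, not the outputs $P_+u$, which are merely bounded. These inputs are sums of $N$ logarithms. By \eqref{eq:positive-derivative-audited} and Lemma~\ref{lem:fixed-cone-audited}, $\psi'$ at a fixed interior point is comparable to $V=\sum_j|z_{j+1}-z_j|$, not to $d=\diam g(J)$. So $\psi/\|\psi\|_{\BMOA}$ is small in the interior only if $\|\psi\|_{\BMOA}\gg V$, whereas the Bloch bound at a fixed point only gives $\|\psi\|_{\BMOA}\gtrsim V$. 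The single-logarithm intuition says nothing here, and uniformity in the partition is exactly what is at stake.

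\textbf{What fills it.} The missing input is the unconditional localized lower bound \eqref{eq:discrete-test-norm}, $\|\psi\|_{\BMOA}\ge cV/d$. It holds for every $|J|<\ell_0$ and every partition, with no hypothesis on $A_{h^{-1}}$. Combine it with $\sup_{|z|\le r}|\psi'(z)|\le M_rV/(2\pi\delta_r^2)$, which follows from \eqref{eq:positive-derivative-audited}. This gives $\sup_{|z|\le r}|(\psi/\|\psi\|_{\BMOA})'(z)|\le C_rd$, which tends to $0$ uniformly in the partition as $|J|\to0$. Now choose your dual functionals $\ell_j$ as combinations of finitely many Taylor-coefficient functionals; this is possible because these separate points of the kernel. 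Then $\ell_j(\psi/\|\psi\|)\to0$ uniformly, and your argument closes. This is precisely Lemma~\ref{lem:localized-null}.

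Drop the fallback with $m+1$ separated copies. Its claim that the BMOA norm of a combination is comparable to its largest piece is unjustified. The other pieces contribute derivatives of size roughly $V_i/\operatorname{dist}^2$ on the comparison ball of a given piece, and they may cancel its local energy.

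\textbf{The $A_h$ case.} The identity $A_{h^{-1}}A_h+P_+C_{h^{-1}}B_h=I$ is correct, but it yields nothing without compactness of $B_h$, which is not available. The Faber intertwining is also unavailable: $C_g\Faber_f^+=(A_{h^{-1}})^{-1}$ presupposes rectifiability and invertibility, which is what you are trying to prove. Moreover, kernels of $A_h$ correspond to kernels, not cokernels, of $A_{h^{-1}}$.

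Your reflected-curve remark, however, suffices on its own. Take $\iota(z)=1/\overline{(z-g(0))}$, $g^*(z)=\iota(f(1/\bar z))$ on $\D$ and $f^*(z)=\iota(g(1/\bar z))$ on $\Ds$. Then $f^*(\infty)=\infty$, and these maps weld $\Gamma^*=\iota(\Gamma)$ with welding $h^{-1}\in\SQS(\T)$. Since $\iota$ is bi-Lipschitz on $\Gamma$, applying your first case to $\Gamma^*$ handles $A_h$.

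The paper instead uses Proposition~\ref{prop:inverse-symmetry}. There the conjugate-linear isomorphism $\mathcal J C_{h^{-1}}$ carries $C_h(\Bplus)\cap\Bminus$ onto $\Bminus\cap C_{h^{-1}}(\Bplus)$, and the corresponding sums onto each other. This gives equal kernel dimensions, equal range codimensions and simultaneous closedness of ranges, with no geometry. Your route is the geometric version of the same symmetry and needs only the first case.
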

Here a bounded operator is Fredholm if its range is closed and both its
kernel and its cokernel are finite-dimensional. The weaker condition in
\textup{(iii)} is usually called upper semi-Fredholmness.   Theorem \ref{thm:main-characterization} gives a complete characterization of chord‑arc curves via invertibility, and Theorem \ref{cor:fredholm-intro} provides a finer identification of the failure mechanism for the chord‑arc property.
It also suggests a potentially useful route toward the connectedness problem for the chord‑arc curves in the BMO‑Teichm\"uller space. Namely, one may study paths of weldings and ask whether Fredholmness can be controlled along such paths in the BMO topology.

The second purpose of the paper is to identify the inverse analytic projection with the classical Faber operator. Once $\Gamma$ is rectifiable, define the interior Faber operator by
\begin{equation}\label{eq:Faber-intro-new}
 \Faber_f^{+}\varphi(w)
 =\frac{1}{2\pi i}\int_{\T}
 \frac{f'(\zeta)\varphi(\zeta)}{f(\zeta)-w}\,d\zeta,
 \qquad w\in\Omega_{+},
\end{equation}
where the integral is understood through the $H^1$--BMOA pairing.  For analytic polynomials, the same  integral also has a classical exterior branch on $\Omega_{-}$.

Recall that $C_g F = F\circ g$, and $\BMOA(\Omega_{+})$ is the space of holomorphic functions on $\Omega_+$ obtained by conformal pull‑back of $g$.
By adapting the main‑theorem arguments to a continuous setting, we show that boundedness of the Faber operator forces the chord‑arc estimate for every rectifiable quasicircle.  The exact identity is established for polynomials via the Cauchy jump formula, and then extended to \(\mathrm{BMOA}\) by weak‑star approximation.

\begin{theorem}[Faber characterization]\label{thm:Faber-characterization-intro}
Let $\Gamma$ be a bounded rectifiable quasicircle, with conformal maps
$f$, $g$ and welding $h$ as in \eqref{eq:welding-intro-new}. Then the
following are equivalent:
\begin{enumerate}[label=\textup{(\roman*)}]
\item $\Gamma$ is chord-arc;
\item $\Faber_f^{+}:\BMOA(\D)\to\BMOA(\Omega_{+})$ is bounded;
\item $\Faber_f^{+}:\BMOA(\D)\to\BMOA(\Omega_{+})$ is a bounded isomorphism.
\end{enumerate}
Whenever these conditions hold, $h$ is strongly quasisymmetric,
$A_h$ and $A_{h^{-1}}$ are bounded isomorphisms, and
\begin{equation}\label{eq:main-exact-identity}
 \boxed{\quad C_g\Faber_f^{+}=(A_{h^{-1}})^{-1}.\quad}
\end{equation}
\end{theorem}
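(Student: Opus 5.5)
The plan is to prove the exact identity \eqref{eq:main-exact-identity} first, on a dense class and under the chord-arc hypothesis, and then to use Theorem~\ref{thm:main-characterization} to get the equivalences. The order is (i)$\Rightarrow$(iii)$\Rightarrow$(ii)$\Rightarrow$(i).

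\emph{The identity on polynomials.} Let $\varphi$ be an analytic polynomial and put $F(w)=\frac{1}{2\pi i}\int_\Gamma \frac{\varphi(f^{-1}(\tau))}{\tau-w}\,d\tau$, which is the Cauchy integral of $\varphi\circ f^{-1}$ over $\Gamma$. After the substitution $\tau=f(\zeta)$ this is exactly $\Faber_f^{+}\varphi$ on $\Omega_+$. On $\Omega_-$ it is a function $F_-$ with $F_-(\infty)=0$. The function $\varphi\circ f^{-1}$ extends holomorphically to $\Omega_-$ except for a polynomial-type principal part at $\infty$. Hence the exterior branch equals $\varphi\circ f^{-1}$ minus its principal part, and the Plemelj jump formula gives, on $\Gamma$,
\[
F_+-F_-=\varphi\circ f^{-1}.
\]
Pulling back to $\T$ by $g$ and using $f^{-1}\circ g=h$ on $\T$, we get
\[
C_gF_+=\varphi\circ h+(F_-\circ g).
\]
Now compose with $h^{-1}$:
\[
(C_gF_+)\circ h^{-1}=\varphi+F_-\circ f .
\]
Here $F_-\circ f$ is anti-analytic on $\T$, since it extends holomorphically to $\Ds$ and vanishes at $\infty$. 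Applying $P_+$ kills that term, so $A_{h^{-1}}(C_g\Faber_f^{+}\varphi)=\varphi$. This argument needs $C_g\Faber_f^{+}\varphi\in\Bplus$, which holds when $\Gamma$ is chord-arc: Lavrentiev's theorem makes the Cauchy integral bounded from $\BMO(\Gamma)$ into analytic $\BMO$ (David's theorem), and the arclength weights are $A_\infty$. It also needs $h^{-1}\in\SQS(\T)$, which likewise follows from Lavrentiev.

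\emph{(i)$\Rightarrow$(iii).} Assume $\Gamma$ is chord-arc. Then $\|C_g\Faber_f^+\varphi\|_{\BMOA}\lesssim\|\varphi\|_{\BMOA}$ on polynomials. Polynomials are weak-star dense in $\BMOA$ under the $H^1$ duality. Both $\Faber_f^+$ (defined through the $H^1$--BMOA pairing) and $A_{h^{-1}}$ are weak-star continuous, because their preduals are the corresponding operators on $H^1$. So the identity $A_{h^{-1}}C_g\Faber_f^+=I$ extends to all of $\BMOA$. By Theorem~\ref{thm:main-characterization}, $A_{h^{-1}}$ is a bounded isomorphism, and therefore $C_g\Faber_f^+=(A_{h^{-1}})^{-1}$. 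Since $C_g$ is an isometric isomorphism $\BMOA(\Omega_+)\to\BMOA(\D)$ by definition, $\Faber_f^+$ is an isomorphism.

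\emph{(iii)$\Rightarrow$(ii).} This is immediate.

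\emph{(ii)$\Rightarrow$(i).} This is the main obstacle, because $h$ is not assumed to be $\SQS$, so Theorem~\ref{thm:main-characterization} cannot be quoted. The boundedness of $A_{h^{-1}}$ is not available either. I would instead mimic the main-theorem argument directly in the Faber setting. Fix a subarc $\Gamma(z_1,z_2)$ and a partition $\{w_j\}$ of it. Take test functions $\varphi=\log\frac{\zeta-a}{\zeta-b}$ in $\BMOA(\D)$, where $a,b$ are points of $\Ds$ near $f^{-1}(z_1),f^{-1}(z_2)$ at distance comparable to the scale, or finite sums of such jump logarithms. Their Faber transforms are, up to bounded errors controlled by the quasicircle geometry, the logarithms $\log\frac{w-f(a)}{w-f(b)}$ together with a jump across the arc. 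Differentiating at a fixed interior comparison point $w_0\in\Omega_+$ at distance about $|z_1-z_2|$ from the arc yields a sum $\sum|w_{j+1}-w_j|/|w_0-w_j|^{...}$. Boundedness of $\Faber_f^+$ bounds this, via the Bloch/BMOA derivative estimate $|F'(w_0)|\dist(w_0,\Gamma)\lesssim\|F\|$, uniformly in the partition. Taking the supremum over partitions gives $\length\Gamma(z_1,z_2)\lesssim|z_1-z_2|$. The delicate points are making the derivative contain the absolute variation rather than a signed sum, which needs a sign-adapted choice of coefficients, and checking that the rectifiability of $\Gamma$ makes every integral legitimate. I expect this localization to be the hardest step.
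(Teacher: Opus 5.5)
\paragraph{Overall.} Your (i)$\Rightarrow$(iii) follows the paper's route and needs only minor repairs, but (ii)$\Rightarrow$(i) has a genuine gap.

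\paragraph{(i)$\Rightarrow$(iii).} You use the polynomial identity $A_{h^{-1}}(C_g\Faber_f^+p)=p$ from the Plemelj jump, get invertibility of $A_{h^{-1}}$ from Theorem~\ref{thm:main-characterization}, and then pass to weak-star limits. The repairs are these.
\begin{itemize}
\item For a polynomial $p$, $\Faber_f^+p$ is a finite combination of Faber polynomials, so $C_g\Faber_f^+p\in\Bplus$ holds without any chord-arc input.
\item The exterior branch is the polynomial part minus $p\circ f^{-1}$, not the reverse. Your jump relation is still right.
\item ``$\Faber_f^+$ is weak-star continuous because its predual is the corresponding operator on $H^1$'' is unsupported. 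What works is a \emph{bounded} polynomial approximation $p_n\to\varphi$. Then $\Faber_f^+p_n(w)\to\Faber_f^+\varphi(w)$ pointwise because $K_w=f'/(f-w)\in H^1(\Ds)$. Montel upgrades this to locally uniform convergence, and bounded locally uniformly convergent sequences converge weak-star.
\end{itemize}

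\paragraph{The gap in (ii)$\Rightarrow$(i).} Your test functions do not behave as you claim. For $a,b\in\Ds$, $\varphi=\log\frac{\zeta-a}{\zeta-b}$ is analytic off a cut in $\Ds$ and is $O(\zeta^{-1})$ at $\infty$. Contour deformation gives \emph{exactly}
\[
\Faber_f^+\varphi(w)=\pm\log\frac{w-f(a)}{w-f(b)}
\]
up to an additive constant. There is no jump across $\Gamma$ and no error term. Its derivative sees only the chord between the two exterior points $f(a),f(b)\in\Omega_-$. With $a,b$ at distance comparable to the scale, the polygonal variation of such points cannot control $\length\Gamma(z_1,z_2)$, since $\Gamma$ may oscillate at all smaller scales.

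More fundamentally, ``boundedness of $\Faber_f^+$ bounds this uniformly in the partition'' presupposes a partition-independent BMOA bound on the input. Nothing in your construction provides one, and for general unimodular combinations of such logarithms no such bound holds.

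\paragraph{The missing idea.} Take as input the Riesz projection of a bounded function adapted to the curve:
\[
\varphi_I=P_+u_I,\qquad u_I=\overline{\tau}\chi_I,
\]
with $\tau$ the unit tangent of $\gamma(t)=f(e^{it})$ and $I=h(J)$. Then $\|\varphi_I\|_{\BMOA}\le C_P$ for free. Since $K_w$ has only negative frequencies, it annihilates $P_-u_I$, and hence
\[
2\pi i\,(\Faber_f^+\varphi_I)'(w)=\int_I\frac{|\gamma'(t)|}{(w-\gamma(t))^{2}}\,dt .
\]
This is a positive-weight integral; the choice $\overline\tau$ is exactly the ``sign adaptation'' you anticipated.

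\paragraph{Finishing from there.} The common-cone lemma at a corkscrew point at distance $\asymp\diam g(J)$ from $\Gamma$ gives $|(\Faber_f^+\varphi_I)'|\gtrsim\length g(J)/(\diam g(J))^2$. The Bloch bound at that point then gives $\length g(J)\lesssim\|\Faber_f^+\|\diam g(J)$. The paper instead integrates over the comparison ball $B_J$ with the local energy bound; either works. The comparison point must sit at the scale of the \emph{diameter} of the arc, not of $|z_1-z_2|$. Passing to the chord needs the bounded-turning property, plus the trivial large-chord case.

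\paragraph{Rescuing your discrete version.} Put $a_j=r\zeta_j$ with $r>1$ at consecutive partition points, and take coefficients $\overline{\tau_j}$ aligned with $f(r\zeta_{j+1})-f(r\zeta_j)$. The input is then $2\pi i\,(P_+u)(\cdot/r)$ for a piecewise unimodular $u$, which is uniformly bounded in BMOA. Let $r\downarrow1$ at the end.
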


The paper is organized as follows. Section~\ref{sec:spaces} fixes the Hardy, BMO, BMOA and Bloch-space notation. Section~\ref{sec:projection} gives the transversality interpretation of $A_h$ and proves the direct implication from the BMO jump decomposition on a chord-arc curve. Sections~\ref{sec:logarithmic-jumps} and \ref{sec:alternative-whitney} establish the converse direction of the main theorem; Section~\ref{sec:alternative-whitney} further derives the Fredholm‑type characterization. Section~\ref{sec:classical-faber} treats the Faber operator, proving the corresponding converse statement as well as exact operator identities and equivalence relations. 

Throughout the paper, $C,C_1,C_2,\ldots$ denote positive constants whose values may change from line to line. Their dependence is indicated when it matters. We write $X\lesssim Y$ if $X\le CY$, $X\gtrsim Y$ if $X\ge C^{-1}Y$, and $X\asymp Y$ if both estimates hold. The Euclidean disk with center $z$ and radius $r$ is denoted by $B(z,r)$.

\section{Preliminaries}\label{sec:spaces}

\subsection{Hardy spaces and boundary decompositions}
For $0<p<\infty$, the Hardy space $H^p(\D)$ consists of holomorphic functions $F$ on $\D$ for which
\[
 \|F\|_{H^p(\D)}^p
 =\sup_{0<r<1}\frac1{2\pi}\int_0^{2\pi}|F(re^{it})|^p\,dt<\infty,
\]
with the usual modification for $p=\infty$. The exterior space $H^p(\Ds)$ is defined analogously by radial limits from $|z|>1$. We use $H_0^p(\Ds)$ for the subspace of functions vanishing at infinity. Boundary values are always understood almost everywhere.

For $u\in L^1(\T)$ and an arc $I\subset\T$, set
\[
 u_I=\frac1{|I|}\int_Iu\,|d\zeta|,
 \qquad
 \|u\|_{\BMO(\T)}
 =\sup_{I\subset\T}\frac1{|I|}\int_I|u-u_I|\,|d\zeta|.
\]
Throughout the paper, $\BMO$ and $\BMOA$ are understood modulo constants; this convention is made once here. Whenever an actual function norm is needed, one may adjoin a point value, for example $|F(0)|+\|F\|_{\BMOA}$. Composition and interior Faber operators preserve constants, while the Riesz projections send constants to constants or zero; hence these operators induce well-defined maps on the quotient spaces. When actual boundary functions are used, $P_+$ retains the zero Fourier coefficient and $P_-$ contains only strictly negative frequencies. Pointwise statements use fixed representatives, and a quotient identity is understood modulo constants.

Recall that  $\Bplus$ is the boundary trace space of $\BMOA(\D)$, $\Bminus$ is the boundary trace space of $\BMOA_0(\Ds)$. In Fourier language,
\[
 \Bplus=\{u\in\BMO:\widehat u(n)=0\text{ for }n<0\}/\C,
 \qquad
 \Bminus=\{u\in\BMO:\widehat u(n)=0\text{ for }n\ge0\}.
\]
The Hilbert transform is bounded on BMO, and hence
\begin{equation}\label{eq:BMO-direct-sum}
 \BMO(\T)=\Bplus\dotplus\Bminus.
\end{equation}
We denote the corresponding bounded Riesz projections by
\[
 P_{+}:\BMO(\T)\to\Bplus,
 \qquad
 P_{-}:\BMO(\T)\to\Bminus.
\]
Thus
\begin{equation}\label{eq:Pplus-kernel-range}
 \Ran P_{+}=\Bplus,
 \qquad
 \Ker P_{+}=\Bminus,
 \qquad
 P_{+}+P_{-}=I.
\end{equation}

\subsection{Equivalent descriptions of BMOA and the Bloch embedding}
For $a\in\D$, let
\[
 \phi_a(z)=\frac{a-z}{1-\overline a z}.
\]
For a holomorphic function $F$ on $\D$, the following conditions are equivalent:
\begin{enumerate}[label=\textup{(\roman*)}]
\item $F\in H^2(\D)$ and its boundary values belong to $\BMO(\T)$;
\item
\begin{equation}\label{eq:mobius-BMOA}
 \sup_{a\in\D}\|F\circ\phi_a-F(a)\|_{H^2(\D)}<\infty;
\end{equation}
\item the measure
\begin{equation}\label{eq:Carleson-measure-BMOA}
 d\mu_F(z)=|F'(z)|^2(1-|z|^2)\,dA(z)
\end{equation}
is a Carleson measure, equivalently
\begin{equation}\label{eq:Carleson-BMOA-audited}
 \sup_{I\subset\T}\frac1{|I|}
 \int_{S(I)}|F'(z)|^2(1-|z|^2)\,dA(z)<\infty.
\end{equation}
\end{enumerate}
Here $S(I)$ is any standard Carleson box above $I$. The quantity in \eqref{eq:mobius-BMOA}, the square root of the quantity in \eqref{eq:Carleson-BMOA-audited}, and the boundary BMO seminorm give equivalent seminorms on $\BMOA(\D)$; see \cite{Garnett}.

The Bloch space $\Bloch(\D)$ consists of holomorphic functions satisfying
\[
 \|F\|_{\Bloch}
 =\sup_{z\in\D}(1-|z|^2)|F'(z)|<\infty.
\]
The standard embedding $\BMOA(\D)\hookrightarrow\Bloch(\D)$ gives
\begin{equation}\label{eq:Bloch-BMOA-audited}
 (1-|z|^2)|F'(z)|\le C\|F\|_{\BMOA},
 \qquad z\in\D.
\end{equation}

For a holomorphic function $G$ on $\Ds$ with $G(\infty)=0$, put
\[
 \widetilde G(z)=\overline{ G(1/\bar{z})},\qquad z\in\D.
\]
We say that $G\in\BMOA_0(\Ds)$ if $\widetilde G\in\BMOA(\D)$. This definition is equivalent to requiring the exterior boundary trace of $G$ to belong to $\Bminus$. The analogous definitions will be used on simply connected Jordan domains by conformal pull-back.

We shall repeatedly use a local form of the Carleson-measure estimate.

\begin{lemma}[Uniform Dirichlet energy on hyperbolic disks]\label{lem:hyperbolic-energy-audited}
For every fixed hyperbolic radius $R>0$ there is $C_R<\infty$ such that
\begin{equation}\label{eq:hyperbolic-energy-audited}
 \int_{B_{\mathrm{hyp}}(z,R)}|F'(\zeta)|^2\,dA(\zeta)
 \le C_R\|F\|_{\BMOA}^2
\end{equation}
for every $z\in\D$ and every $F\in\BMOA(\D)$.
\end{lemma}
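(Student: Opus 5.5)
The plan is to reduce \eqref{eq:hyperbolic-energy-audited} to the Carleson-measure characterization \eqref{eq:Carleson-BMOA-audited}, splitting into two cases according to how close $z$ is to $\T$. First fix $R>0$. The hyperbolic disk $B_{\mathrm{hyp}}(z,R)$ is a Euclidean disk whose radius is comparable to $1-|z|$, with constants depending only on $R$. Moreover, on this disk $1-|\zeta|^2\asymp 1-|z|^2$, again with constants depending only on $R$. Both facts follow from conformal invariance: $B_{\mathrm{hyp}}(z,R)=\phi_z(B_{\mathrm{hyp}}(0,R))$, the disk $B_{\mathrm{hyp}}(0,R)=B(0,\rho_R)$ with $\rho_R<1$, and $|\phi_z'(w)|=(1-|z|^2)/|1-\bar z w|^2\asymp 1-|z|^2$ for $|w|\le\rho_R$.

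\emph{Case $1-|z|\ge\delta_R$}, where $\delta_R>0$ is a fixed small constant. Here I would not need Carleson boxes at all. The disk $B_{\mathrm{hyp}}(z,R)$ lies in the fixed compact set $\{|\zeta|\le r_R\}$ with $r_R<1$. On that set $|F'(\zeta)|\le C(1-r_R^2)^{-1}\|F\|_{\BMOA}$ by the Bloch embedding \eqref{eq:Bloch-BMOA-audited}. The area of the disk is bounded, so the integral is at most $C_R\|F\|_{\BMOA}^2$.

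\emph{Case $1-|z|<\delta_R$.} Let $I$ be the arc of $\T$ centred at $z/|z|$ with $|I|=K(1-|z|)$, where $K=K_R$ is large enough that $B_{\mathrm{hyp}}(z,R)\subset S(I)$. The radius comparability gives such a $K$, and $\delta_R$ is chosen small enough that $|I|<2\pi$. Then
\[
\int_{B_{\mathrm{hyp}}(z,R)}|F'|^2\,dA
\le \frac{C_R}{1-|z|^2}\int_{S(I)}|F'(\zeta)|^2(1-|\zeta|^2)\,dA(\zeta)
\le C_R\frac{|I|}{1-|z|}\,\|F\|_{\BMOA}^2 ,
\]
which is at most $C_R K\|F\|_{\BMOA}^2$. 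The first inequality inserts the factor $(1-|\zeta|^2)/(1-|z|^2)\asymp1$, and the second is \eqref{eq:Carleson-BMOA-audited} together with the equivalence of seminorms quoted from \cite{Garnett}.

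An alternative route would be Möbius invariance: pull back by $\phi_z$ and use $\int_{B(0,\rho_R)}|(F\circ\phi_z)'|^2\,dA\le C_R\|F\circ\phi_z-F(z)\|_{H^2}^2$, which is controlled by \eqref{eq:mobius-BMOA}. There is no real obstacle in either route. The only care needed is the geometric bookkeeping: checking that $B_{\mathrm{hyp}}(z,R)$ fits in a Carleson box of comparable size, which is exactly what the choice of $K$ and $\delta_R$ handles.
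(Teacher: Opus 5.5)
Your proof is correct, but your main route differs from the paper's. The paper's proof is exactly the ``alternative route'' you mention at the end. It pulls back by $\phi_z$ and uses conformal invariance of the Dirichlet integral to move $B_{\mathrm{hyp}}(z,R)$ to $B(0,r_R)$. It then expands $F\circ\phi_z-F(z)=\sum_{n\ge1}a_n\zeta^n$ and bounds $\pi\sum_{n\ge1} n|a_n|^2r_R^{2n}\le C_R\sum_{n\ge1}|a_n|^2$, where $C_R=\pi\sup_n n\,r_R^{2n}$ up to the seminorm-equivalence constant. The last sum is controlled by the M\"obius-invariant seminorm \eqref{eq:mobius-BMOA}. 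That argument treats all $z\in\D$ at once, with no case split and no geometry.

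Your version instead localizes the Carleson condition \eqref{eq:Carleson-BMOA-audited}, and this requires several checks:
\begin{itemize}
\item $B_{\mathrm{hyp}}(z,R)$ is a Euclidean disk of radius $\asymp 1-|z|$;
\item $1-|\zeta|^2\asymp 1-|z|^2$ on it, which follows from $1-|\phi_z(w)|^2=|\phi_z'(w)|(1-|w|^2)$;
\item it fits inside a box $S(I)$ with $|I|=K(1-|z|)$;
\item a separate interior case, handled by the Bloch bound \eqref{eq:Bloch-BMOA-audited}, is needed so that $I$ stays a proper arc.
\end{itemize}
Each of these steps is correct as you state it.

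What your approach buys is transparency. It shows the lemma is just the fact that a Carleson measure gives mass $\lesssim 1-|z|$ to a Whitney-type region on which the weight $1-|\zeta|^2$ is essentially constant. It also uses only the box description, not the Garsia-type norm. The paper's approach is shorter because M\"obius invariance absorbs all of that bookkeeping.
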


\begin{proof}
Let $\phi_z$ be a disk automorphism with $\phi_z(0)=z$. Conformal invariance of the Dirichlet integral gives
\[
 \int_{B_{\mathrm{hyp}}(z,R)}|F'|^2\,dA
 =\int_{B_{\mathrm{hyp}}(0,R)}|(F\circ\phi_z)'|^2\,dA.
\]
Subtracting $F(z)$ does not change the derivative. If $r_R<1$ is the Euclidean radius of $B_{\mathrm{hyp}}(0,R)$ and
\[
 F\circ\phi_z-F(z)=\sum_{n\ge1}a_n\zeta^n,
\]
then
\[
 \int_{|\zeta|<r_R}|(F\circ\phi_z)'(\zeta)|^2\,dA(\zeta)
 =\pi\sum_{n\ge1}n|a_n|^2r_R^{2n}
 \le C_R\sum_{n\ge1}|a_n|^2.
\]
The last sum is bounded by the square of the M\"obius-invariant BMOA seminorm in \eqref{eq:mobius-BMOA}.
\end{proof}

\section{Analytic projections and chord-arc curves}\label{sec:projection}
This section proves the implication
\[
\Gamma\text{ chord-arc}\quad\Longrightarrow\quad A_h\text{ is a bounded isomorphism},
\] which relies crucially on the jump decomposition of $\text{BMO}$ functions on a chord-arc curve established in the our   previous work \cite{LiuShenCauchy,LiuShenFaber}. 
\subsection{A transversality theorem}

The next  result gives the transversality interpretation of invertibility.
\begin{theorem}\label{thm:transversality}
For $h\in\SQS(\T)$, the following conditions are equivalent:
\begin{enumerate}[label=\textup{(\roman*)}]
\item $A_h:\Bplus\to \Bplus$ is a bounded isomorphism;
\item
\begin{equation}\label{eq:transversality}
 \BMO(\T)=C_h(\Bplus)\dotplus \Bminus;
\end{equation}
\item the restriction
\[
 P_{+}|_{C_h(\Bplus)}:C_h(\Bplus)\longrightarrow \Bplus
\]
is a bounded isomorphism.
\end{enumerate}
If these conditions hold, the bounded projection onto $C_h(\Bplus)$ along $\Bminus$ is
\begin{equation}\label{eq:Qh}
 Q_h=C_hA_h^{-1}P_{+}.
\end{equation}
\end{theorem}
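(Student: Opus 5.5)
This is pure functional analysis once we know, by Jones's theorem, that $C_h$ is a bounded isomorphism of $\BMO(\T)$. In particular $C_h(\Bplus)$ is a closed subspace of $\BMO(\T)$, and $C_h|_{\Bplus}:\Bplus\to C_h(\Bplus)$ is a bounded isomorphism. Quotients by constants cause no trouble: $C_h$ maps constants to constants and $P_+$ keeps them. Since $A_h=P_{+}|_{C_h(\Bplus)}\circ C_h|_{\Bplus}$, and the second factor is an isomorphism onto $C_h(\Bplus)$, statement (i) holds exactly when $P_{+}|_{C_h(\Bplus)}$ is a bounded isomorphism onto $\Bplus$. This gives (i)$\Leftrightarrow$(iii) immediately. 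Boundedness is automatic throughout, by the open mapping theorem, so only bijectivity matters.

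For (iii)$\Leftrightarrow$(ii), I will use the splitting \eqref{eq:BMO-direct-sum} and \eqref{eq:Pplus-kernel-range}, and the fact that $\Ker P_+=\Bminus$.

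Injectivity of $P_+$ on $C_h(\Bplus)$ is equivalent to $C_h(\Bplus)\cap\Bminus=\{0\}$ (modulo constants).

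Surjectivity onto $\Bplus$ is equivalent to $C_h(\Bplus)+\Bminus=\BMO(\T)$. For the forward direction, given $u\in\BMO$, pick $v\in C_h(\Bplus)$ with $P_+v=P_+u$; then $u-v\in\Ker P_+=\Bminus$. For the converse, write $\varphi\in\Bplus$ as $\varphi=v+w$ with $v\in C_h(\Bplus)$ and $w\in\Bminus$, and apply $P_+$ to get $\varphi=P_+v$.

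Thus algebraic transversality is equivalent to (iii). The direct sum in (ii) is topological automatically, because both summands are closed and the open mapping theorem applies to the addition map $C_h(\Bplus)\times\Bminus\to\BMO$.

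For the formula \eqref{eq:Qh}, assume (i) and set $Q_h=C_hA_h^{-1}P_+$, which is bounded as a composition of bounded maps.

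\emph{$Q_h$ fixes $C_h(\Bplus)$.} For $u=C_h\varphi$ with $\varphi\in\Bplus$, we have $P_+u=A_h\varphi$, so $Q_hu=C_h\varphi=u$.

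\emph{$Q_h$ kills $\Bminus$.} For $w\in\Bminus$, $P_+w=0$, so $Q_hw=0$.

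\emph{$Q_h$ maps into $C_h(\Bplus)$.} This is clear from the definition.

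Hence $Q_h$ is the projection onto $C_h(\Bplus)$ along $\Bminus$.

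There is no real obstacle here. The only points needing care are the closedness of $C_h(\Bplus)$, which follows from Jones's theorem and the injectivity of $C_h$, and consistency modulo constants.
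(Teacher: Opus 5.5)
Your proof is correct and follows essentially the same route as the paper. Both rest on the factorization $A_h=(P_{+}|_{C_h(\Bplus)})\circ(C_h|_{\Bplus})$, on $\Ker P_{+}=\Bminus$, and on a direct check that $Q_h=C_hA_h^{-1}P_{+}$ is the projection. The one small difference is how you get the topological directness in (ii): you apply the open mapping theorem, using that $C_h(\Bplus)$ is closed by Jones's theorem, whereas the paper reads it off the boundedness of the explicit projection $Q_h$.
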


\begin{proof}
Assume first that $A_h$ is invertible.  Given $u\in\BMO$, define
\[
 \varphi=A_h^{-1}P_{+}u,
 \qquad
 v_{-}=u-C_h\varphi.
\]
Then
\[
 P_{+}v_{-}=P_{+}u-P_{+}C_hA_h^{-1}P_{+}u=0,
\]
so $v_{-}\in \Bminus$ and
\[
 u=C_h\varphi+v_{-}.
\]
If $C_h\varphi+v_{-}=0$, applying $P_{+}$ gives $A_h\varphi=0$, hence $\varphi=v_{-}=0$.  Thus the sum is algebraically direct.  Formula \eqref{eq:Qh} shows that its coordinate projection is bounded, so it is a topological direct sum.

The equivalence of (ii) and (iii) follows because $P_{+}$ has kernel $\Bminus$.  More explicitly, under \eqref{eq:transversality}, $P_{+}|_{C_h(\Bplus)}$ is injective.  Given $a\in \Bplus$, write $a=C_h\varphi+v_{-}$; then $a=P_{+}C_h\varphi$, so the restriction is surjective.  Its inverse is $Q_h|_{\Bplus}$.

Finally,
\[
 A_h=\bigl(P_{+}|_{C_h(\Bplus)}\bigr)\circ\bigl(C_h|_{\Bplus}\bigr),
\]
which proves the remaining implication.
\end{proof}

\begin{proposition}\label{prop:inverse-symmetry}
For $h\in\SQS(\T)$,
\[
 A_h\text{ is invertible}\quad\Longleftrightarrow\quad
 A_{h^{-1}}\text{ is invertible}.
\]
More generally, their kernels have the same dimension, their ranges have
the same codimension, and one range is closed if and only if the other is
closed. In particular, Fredholmness and upper semi-Fredholmness are
unchanged when $h$ is replaced by $h^{-1}$.
\end{proposition}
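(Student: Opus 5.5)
The plan is to express every quantity attached to $A_h$ through the pair of subspaces $C_h(\Bplus)$ and $\Bminus$ of $\BMO(\T)$, as in the proof of Theorem~\ref{thm:transversality}. We then move this pair to the corresponding pair for $h^{-1}$ using two symmetries of $\BMO$. The first symmetry is $C_{h^{-1}}=C_h^{-1}$, which is a bounded isomorphism of $\BMO$ by Jones's theorem. The second is complex conjugation $J u=\overline u$. It is an antilinear isometric bijection of $\BMO$ that commutes with every $C_h$. Modulo constants it exchanges $\Bplus$ and $\Bminus$, since $\widehat{\overline u}(n)=\overline{\widehat u(-n)}$.

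\textbf{Kernels.} First, $\varphi\in\Ker A_h$ if and only if $C_h\varphi\in\Bminus$. Because $C_h$ is injective, $C_h$ maps $\Ker A_h$ bijectively onto $C_h(\Bplus)\cap\Bminus$. Applying $C_{h^{-1}}$ identifies this space with $\Bplus\cap C_{h^{-1}}(\Bminus)$. Next we apply $J$, using $J\Bplus=\Bminus$, $J\Bminus=\Bplus$ and $JC_{h^{-1}}=C_{h^{-1}}J$. This gives an antilinear bijection onto $\Bminus\cap C_{h^{-1}}(\Bplus)=C_{h^{-1}}(\Ker A_{h^{-1}})$. Linear and antilinear bijections preserve complex dimension, so $\dim\Ker A_h=\dim\Ker A_{h^{-1}}$.

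\textbf{Ranges.} The map $P_{+}$ is a bounded surjection $\BMO\to\Bplus$ with kernel $\Bminus$. It therefore induces a topological isomorphism $\BMO/\Bminus\cong\Bplus$. Under this isomorphism $\Ran A_h=P_{+}C_h(\Bplus)$ corresponds to the image of $C_h(\Bplus)+\Bminus$. Hence $\Ran A_h$ is closed exactly when $C_h(\Bplus)+\Bminus$ is closed in $\BMO$, and
\[
\operatorname{codim}_{\Bplus}\Ran A_h=\operatorname{codim}_{\BMO}\bigl(C_h(\Bplus)+\Bminus\bigr).
\]
We now apply the bijection $JC_{h^{-1}}$, which is antilinear and bicontinuous. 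It carries $C_h(\Bplus)+\Bminus$ onto $\Bminus+C_{h^{-1}}(\Bplus)$. This map preserves closedness and codimension, so the closedness and codimension of $\Ran A_h$ coincide with those of $\Ran A_{h^{-1}}$. Combining this with the kernel statement, and noting that $A_h$ is invertible exactly when its kernel is trivial and its range is all of $\Bplus$, gives the equivalence of invertibility. The same data also decide Fredholmness and upper semi-Fredholmness, so these are symmetric under $h\mapsto h^{-1}$ as well.

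\textbf{Main obstacle.} The step needing the most care is the bookkeeping of constants. $\Bplus$ is taken modulo $\C$ while $\Bminus$ has no constant term, so $J\Bplus=\Bminus$ holds only in $\BMO/\C$. I would therefore carry out the whole argument in $\BMO(\T)$ understood modulo constants, using the convention fixed in Section~\ref{sec:spaces}. In that setting, $C_h$, $C_{h^{-1}}$ and $J$ all preserve constants, and \eqref{eq:BMO-direct-sum} is a genuine topological direct sum. The intersections and sums above are then computed in the quotient, where every identity used is exact. A secondary point is to check that antilinear maps preserve complex dimension and codimension. They do, because $J$ maps complex subspaces to complex subspaces and sends a complex basis of a complement to a complex basis of a complement.
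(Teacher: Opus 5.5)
Your proposal is correct and is essentially the paper's proof. Both arguments identify $\Ker A_h$ with $C_h(\Bplus)\cap\Bminus$, and tie the closedness and codimension of $\Ran A_h$ to those of $C_h(\Bplus)+\Bminus$ via $\Ker P_+=\Bminus$; both then transport the pair $(C_h(\Bplus),\Bminus)$ to $(\Bminus,C_{h^{-1}}(\Bplus))$ by the conjugate-linear isomorphism $JC_{h^{-1}}$ on $\BMO(\T)/\C$. The only cosmetic difference is that you deduce invertibility from trivial kernel, zero codimension and the open mapping theorem, whereas the paper uses Theorem~\ref{thm:transversality} for that step.
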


\begin{proof}
Write $Z=\BMO(\T)/\C$, $N=\Bminus$, and $E_h=C_h(\Bplus)$.
The isomorphism $C_h:\Bplus\to E_h$ identifies
\[
 \Ker A_h\simeq E_h\cap N,
 \qquad
 \Ran A_h=P_+(E_h).
\]
Since $\Ker P_+=N$, one also has the algebraic direct-sum identity
\begin{equation}\label{eq:defect-sum}
 E_h+N=\Ran A_h\dotplus N.
\end{equation}
Thus $E_h+N$ is closed in $Z$ if and only if $\Ran A_h$ is closed in
$\Bplus$: one implication follows by intersection with $\Bplus$, and
the other from the fixed topological splitting $Z=\Bplus\dotplus N$.
Moreover, $P_+$ induces an isomorphism of the quotient vector spaces
\[
 Z/(E_h+N)\simeq\Bplus/\Ran A_h.
\]

Let $\mathcal J u=\overline u$ on $Z$. Complex conjugation interchanges
$\Bplus$ and $N$ and commutes with every real circle composition operator.
The bounded conjugate-linear isomorphism $\mathcal J C_{h^{-1}}$ sends
$E_h$ to $N$ and $N$ to $E_{h^{-1}}$. It therefore sends the intersection
$E_h\cap N$ onto $N\cap E_{h^{-1}}$ and the sum $E_h+N$ onto
$N+E_{h^{-1}}$. This preserves their dimensions, codimensions, and
closedness, proving the assertions about kernels and ranges.

For invertibility, the same argument sends the topological direct sum
$Z=E_h\dotplus N$ to $Z=E_{h^{-1}}\dotplus N$.
Theorem~\ref{thm:transversality} gives the stated equivalence.
\end{proof}

\subsection{BMO space on chord-arc curve}
Assume throughout this section that $\Gamma$ is chord-arc.  Let $ds$ denote arclength measure and define
\[
 \|u\|_{\BMO(\Gamma)}
 =\sup_{E\subset\Gamma}\frac1{|E|}\int_E|u-u_E|\,ds,
 \qquad
 u_E=\frac1{|E|}\int_Eu\,ds,
\]
where the supremum is taken over connected subarcs $E$ and $|E|$ denotes arclength.  As above, $\BMO(\Gamma)$ is understood modulo constants.

Let $L=\length\Gamma$, and let
\[
 \sigma:\T\longrightarrow\Gamma
\]
be the positively oriented arclength parametrization.  Thus, writing $\zeta=e^{it}$,
\[
 \left|\frac{d}{dt}\sigma(e^{it})\right|=\frac{L}{2\pi}
 \quad\text{for a.e. }t.
\]
It is easy to show that
\begin{equation}\label{eq:Usigma-isometry-new}
 U_\sigma:\BMO(\Gamma)\longrightarrow\BMO(\T),
 \qquad U_\sigma u=u\circ\sigma,
\end{equation}
is an isometric isomorphism.

There are unique orientation-preserving circle homeomorphisms $\alpha,\beta$ such that
\begin{equation}\label{eq:gf-sigma-alpha-beta}
 g|_{\T}=\sigma\circ\alpha,
 \qquad
 f|_{\T}=\sigma\circ\beta.
\end{equation}
Lavrentiev's theorem for chord-arc curves \cite{Pommerenke,Semmes} gives absolute continuity of the boundary extensions and
\begin{equation}\label{eq:Lavrentiev-both}
 |g'|\in A_\infty(\T),
 \qquad
 |f'|\in A_\infty(\T).
\end{equation}
Let $A,B:\R\to\R$ be increasing lifts of $\alpha,\beta$.  Differentiating \eqref{eq:gf-sigma-alpha-beta} at almost every point gives
\begin{equation}\label{eq:alpha-beta-derivatives-new}
 A'(t)=\frac{2\pi}{L}|g'(e^{it})|,
 \qquad
 B'(t)=\frac{2\pi}{L}|f'(e^{it})|.
\end{equation}
Consequently $A'$ and $B'$ are $A_\infty$ weights, so
\begin{equation}\label{eq:alpha-beta-SQS-new}
 \alpha,\beta\in\SQS(\T).
\end{equation}
The welding relation $g=f\circ h$ and \eqref{eq:gf-sigma-alpha-beta} imply
\[
 \sigma\circ\alpha=\sigma\circ\beta\circ h,
\]
and therefore
\begin{equation}\label{eq:h-beta-alpha}
 h=\beta^{-1}\circ\alpha,
 \qquad h^{-1}=\alpha^{-1}\circ\beta.
\end{equation}
In particular a chord-arc welding is strongly quasisymmetric.

Define the exterior conformal pull-back
\begin{equation}\label{eq:Uf-def-new}
 U_f:\BMO(\Gamma)\longrightarrow\BMO(\T),
 \qquad U_fu=u\circ f|_{\T}.
\end{equation}
By \eqref{eq:gf-sigma-alpha-beta},
\[
 U_fu=(u\circ\sigma)\circ\beta=C_\beta U_\sigma u.
\]
Since $C_\beta$ and $C_{\beta^{-1}}$ are bounded BMO automorphisms, $U_f$ is a bounded isomorphism. More precisely,
\begin{equation}\label{eq:BMO-curve-pullback-equivalence}
 \|u\circ f\|_{\BMO(\T)}
 =\|C_\beta U_\sigma u\|_{\BMO(\T)}
 \asymp\|U_\sigma u\|_{\BMO(\T)}
 =\|u\|_{\BMO(\Gamma)}.
\end{equation}
Thus intrinsic BMO on $\Gamma$ and BMO pulled back by the exterior conformal parameter are equivalent as Banach spaces, not merely as sets. The factorization and its inverse are
\begin{equation}\label{eq:Uf-factorization-new}
 U_f=C_\beta U_\sigma,
 \qquad
 U_f^{-1}=U_\sigma^{-1}C_{\beta^{-1}}.
\end{equation}

Define
\[
 \BMOA(\Omega_{+})
 =\{F\in\mathcal O(\Omega_{+}):F\circ g\in\BMOA(\D)\},
\]
and
\[
 \BMOA_0(\Omega_{-})
 =\{G\in\mathcal O(\Omega_{-}):G\circ f\in\BMOA_0(\Ds),\ G(\infty)=0\}.
\] Here, $\mathcal{O}(\Omega)$ denotes the space of all analytic functions on $\Omega$. Then these spaces have non-tangential limit values almost everywhere in $\Gamma$ with respect to the arc-length
measure. 
Denote their boundary trace spaces by $\BplusGamma$ and $\BminusGamma$, respectively. 
Then $\BplusGamma$ and $\BminusGamma$ are subspaces of $\BMO(\Gamma)$; more details can be found in our paper \cite{LiuShenFaber}. 

\subsection{The BMO jump theorem}
 The following result is a consequence of the boundedness of the Cauchy singular integral on $\BMO(\Gamma)$ (see  \cite{LiuShenCauchy}.)  
\begin{theorem}
If $\Gamma$ is chord-arc, then
\begin{equation}\label{eq:curve-BMO-direct-sum-new}
 \BMO(\Gamma)=\BplusGamma\dotplus\BminusGamma
\end{equation}
as a topological direct sum.  Equivalently, there is a bounded projection
\[
 Q_\Gamma^{+}:\BMO(\Gamma)\longrightarrow\BplusGamma
\]
whose kernel is $\BminusGamma$.
\end{theorem}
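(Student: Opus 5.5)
The plan is to construct the projection $Q_\Gamma^{+}$ explicitly by the Cauchy integral on $\Gamma$. We then read off the direct sum from the classical Plemelj jump formula together with the boundedness of the Cauchy singular integral $S_\Gamma$ on $\BMO(\Gamma)$. That boundedness is the result cited from \cite{LiuShenCauchy}, which we take as given.

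For $u\in\BMO(\Gamma)$, note that $\BMO(\Gamma)\subset L^1(\Gamma,ds)$ because $\Gamma$ is rectifiable. Define
\[
 \mathcal C u(z)=\frac{1}{2\pi i}\int_\Gamma\frac{u(w)}{w-z}\,dw,\qquad z\notin\Gamma .
\]
Then $\mathcal C u$ is holomorphic on $\Omega_+$ and on $\Omega_-$, and it vanishes at $\infty$. Since $\Gamma$ is chord-arc, it is in particular a Lavrentiev (Ahlfors-regular) curve. The Coifman--McIntosh--Meyer theorem then gives $L^p$-boundedness of $S_\Gamma$ and of the nontangential maximal Cauchy integral. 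Hence the nontangential limits $(\mathcal Cu)_\pm$ exist a.e. on $\Gamma$ and satisfy the Plemelj formulas
\[
 (\mathcal Cu)_+ = \tfrac12(u+S_\Gamma u),\qquad (\mathcal Cu)_-=-\tfrac12(u-S_\Gamma u),
\]
so that $u=(\mathcal Cu)_+-(\mathcal Cu)_-$. For BMO data we would first apply this to bounded truncations of $u$, or to a localized version of $u$ minus a constant, and then pass to the limit. For this we would use that $S_\Gamma$ preserves constants modulo constants. We define $Q_\Gamma^+u=(\mathcal C u)_+$. This map is bounded on $\BMO(\Gamma)$ modulo constants, because $S_\Gamma$ is.

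Next we check that $Q_\Gamma^+$ maps into $\BplusGamma$ and that $-(\mathcal Cu)_-\in\BminusGamma$. This is the main obstacle. Knowing that the trace lies in $\BMO(\Gamma)$ does not immediately give $F=\mathcal Cu|_{\Omega_+}\in\BMOA(\Omega_+)$, that is, $F\circ g\in\BMOA(\D)$. To handle it, we transfer the problem to the circle. By \eqref{eq:gf-sigma-alpha-beta} and Lavrentiev's theorem \eqref{eq:Lavrentiev-both}, the trace of $F\circ g$ is $(Q_\Gamma^+u)\circ\sigma\circ\alpha=C_\alpha U_\sigma Q_\Gamma^+u$, which lies in $\BMO(\T)$ since $\alpha\in\SQS(\T)$ by Jones's theorem. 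It remains to know that $F\circ g$ is the Poisson (equivalently Cauchy) extension of its boundary values, i.e.\ that $F\circ g\in H^1(\D)$. We would get this from the fact that $F$ lies in the Smirnov class $E^p(\Omega_+)$ for $p<\infty$. That membership follows from the $L^p$ boundedness of the maximal Cauchy integral on chord-arc curves together with the John--Nirenberg inequality for $u$. Then $F\circ g\,(g')^{1/p}\in H^p$, and since $1/g'$ lies in $H^{\epsilon}$ for small $\epsilon$ (as $\log g'\in\BMOA$), Hölder gives $F\circ g\in H^{q}$ for some $q>0$. An $H^q$ function with BMO boundary values lies in $\BMOA$. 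The exterior piece is handled the same way using $f$ and $\beta$; the normalization $G(\infty)=0$ holds by construction.

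Finally we verify directness. Let $u\in\BplusGamma\cap\BminusGamma$, so $u$ is the trace of some $F\in\BMOA(\Omega_+)$ and of some $G\in\BMOA_0(\Omega_-)$. Both lie in the respective Smirnov classes by the argument above. Then the function equal to $F$ on $\Omega_+$ and $G$ on $\Omega_-$ has no jump across $\Gamma$. By Morera's theorem on rectifiable curves, with Smirnov boundary behaviour, it is entire and vanishes at $\infty$, hence $u$ is constant, i.e.\ zero modulo constants. Similarly, if $u\in\BplusGamma$, then Cauchy's theorem for $E^1(\Omega_+)$ gives $\mathcal C u=F$ on $\Omega_+$ and $\mathcal Cu=0$ on $\Omega_-$, so $Q_\Gamma^+$ restricts to the identity on $\BplusGamma$ and has kernel $\BminusGamma$. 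Thus $Q_\Gamma^+$ is a bounded projection onto $\BplusGamma$ with kernel $\BminusGamma$. This gives the topological direct sum \eqref{eq:curve-BMO-direct-sum-new}, and closedness of both summands comes for free as range and kernel of a bounded projection.
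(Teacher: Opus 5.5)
Your proposal is correct and follows essentially the paper's route. The paper gives no separate argument; it records the theorem as a consequence of the boundedness of the Cauchy singular integral $S_\Gamma$ on $\BMO(\Gamma)$ from \cite{LiuShenCauchy}, deferring the trace-space facts to \cite{LiuShenFaber}, and your projection $Q_\Gamma^{+}=\tfrac12(I+S_\Gamma)$, the Smirnov-class/Lavrentiev argument placing the two Cauchy pieces in $\BplusGamma$ and $\BminusGamma$, and Cauchy's theorem for the reproducing property unpack exactly that citation (your truncation step can be dropped, since $\BMO(\Gamma)\subset L^p(\Gamma)$ for every $p<\infty$ on a bounded chord-arc curve by John--Nirenberg, so the $L^p$ Plemelj theory applies directly).
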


\subsection{Pull-back of the two trace spaces}
\begin{proposition}\label{prop:pullback-curve-spaces-new}
Under the exterior pull-back $U_f$,
\begin{equation}\label{eq:Uf-Aplus-new}
 U_f(\BplusGamma)=C_{h^{-1}}(\Bplus),
\end{equation}
and
\begin{equation}\label{eq:Uf-Aminus-new}
 U_f(\BminusGamma)=\Bminus.
\end{equation}
\end{proposition}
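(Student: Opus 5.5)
The plan is to prove both identities by unwinding definitions: the trace spaces $\BplusGamma$ and $\BminusGamma$ are defined by conformal pull-back through $g$ and $f$, and the welding relation $f|_{\T}=g\circ h^{-1}$ converts a pull-back through $g$ into one through $f$. Each identity will be obtained as a pair of inclusions. The analytic input is a single boundary-correspondence fact, treated in the last paragraph.

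For \eqref{eq:Uf-Aplus-new}, start with $u\in\BplusGamma$. Then $u$ is the nontangential trace on $\Gamma$ of some $F\in\BMOA(\Omega_+)$, so $\Phi=F\circ g\in\BMOA(\D)$. Let $\varphi\in\Bplus$ be the radial boundary function of $\Phi$.
\begin{enumerate}[label=\textup{(\alph*)}]
\item First I would show that $\varphi=u\circ g$ a.e. on $\T$. This is where the boundary-correspondence fact below is needed.
\item Using $g=f\circ h$ on $\T$, i.e. $f|_{\T}=g\circ h^{-1}$, this gives
\[
 U_fu=u\circ f=(u\circ g)\circ h^{-1}=C_{h^{-1}}\varphi .
\]
So $U_f(\BplusGamma)\subset C_{h^{-1}}(\Bplus)$.
\item Conversely, given $\varphi\in\Bplus$, let $\Phi$ be its analytic extension in $\BMOA(\D)$ and put $F=\Phi\circ g^{-1}\in\BMOA(\Omega_+)$. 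By step (a), its trace is $u=\varphi\circ g^{-1}$ on $\Gamma$.
\item This $u$ lies in $\BMO(\Gamma)$: since $g|_{\T}=\sigma\circ\alpha$, we have $U_\sigma u=\varphi\circ\alpha^{-1}=C_{\alpha^{-1}}\varphi$. This is in $\BMO(\T)$ because $\alpha^{-1}\in\SQS(\T)$ and Jones' theorem applies.
\item Hence $u\in\BplusGamma$ and $U_fu=C_{h^{-1}}\varphi$, which gives the reverse inclusion.
\end{enumerate}

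The identity \eqref{eq:Uf-Aminus-new} is easier, since no welding is involved. If $G\in\BMOA_0(\Omega_-)$ has trace $v$, then by the same correspondence applied to $f$ the exterior trace of $G\circ f$ is $v\circ f=U_fv\in\Bminus$. Conversely, for $\psi\in\Bminus$, let $\widetilde G$ be its extension in $\BMOA_0(\Ds)$. Then $G=\widetilde G\circ f^{-1}$ vanishes at $\infty$ and has trace $\psi\circ f^{-1}$. This trace lies in $\BMO(\Gamma)$ by \eqref{eq:BMO-curve-pullback-equivalence}, or equivalently because $U_f^{-1}=U_\sigma^{-1}C_{\beta^{-1}}$ is bounded.

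The main obstacle is the identification in step (a): radial limits of $F\circ g$ at $\zeta$ must correspond to nontangential limits of $F$ at $g(\zeta)$, for a.e. $\zeta$ in the sense relevant on each side. Two ingredients are needed.
\begin{itemize}
\item \emph{Null sets correspond.} By Lavrentiev's theorem \eqref{eq:Lavrentiev-both} and \eqref{eq:alpha-beta-SQS-new}, $g|_{\T}$ and $f|_{\T}$ are absolutely continuous with $A_\infty$ derivatives, and so are their inverses. Hence arclength-null sets on $\Gamma$ correspond to null sets on $\T$.
\item \emph{Cones correspond.} Since $\Gamma$ is rectifiable, the F.~and M.~Riesz/Privalov theory (McMillan's twist-point theorem) shows that $g'$ has a finite nonzero angular limit a.e. on $\T$. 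At such points $g$ is conformal at the boundary, so Stolz angles at $\zeta$ and nontangential cones at $g(\zeta)$ correspond in both directions.
\end{itemize}
Combining these with Lindelöf's theorem, a radial limit of $F\circ g$ at $\zeta$ is a nontangential limit of $F$ at $g(\zeta)$ off a null set, and conversely. The same reasoning applies to $f$ on $\Ds$. I would cite \cite{Pommerenke} and \cite{LiuShenFaber} for these facts rather than reprove them.
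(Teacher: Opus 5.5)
Your proposal is correct and follows the same route as the paper. Both arguments write $F=\varphi\circ g^{-1}$ (respectively $G=\Psi\circ f^{-1}$) and use $f|_{\T}=g\circ h^{-1}$ to get $U_fF^*=C_{h^{-1}}\varphi$ and $U_fG^*=\Psi$. The paper takes for granted the a.e. boundary correspondence $F^*\circ g=\varphi$ and the inclusions $\BplusGamma,\BminusGamma\subset\BMO(\Gamma)$. You justify these explicitly, through F.~and M.~Riesz and a.e. boundary conformality of $g$ and $f$, and through Jones's theorem for $C_{\alpha^{-1}}$; this is a harmless and welcome addition.
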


\begin{proof}
Let $F\in\BMOA(\Omega_{+})$.  Write $F=\varphi\circ g^{-1}$ with $\varphi\in\BMOA(\D)$.  Since $f=g\circ h^{-1}$ on $\T$, for almost every $\zeta\in\T$,
\begin{align*}
 (U_fF^*)(\zeta)
 &=F^*(f(\zeta))\\
 &=\varphi(g^{-1}(f(\zeta)))\\
 &=\varphi(h^{-1}(\zeta))\\
 &=(C_{h^{-1}}\varphi)(\zeta).
\end{align*}
This proves $U_f(\BplusGamma)\subset C_{h^{-1}}(\Bplus)$.  Conversely, for every $\varphi\in\BMOA(\D)$ the function $F=\varphi\circ g^{-1}$ belongs to $\BMOA(\Omega_{+})$ and the same calculation gives $U_fF^*=C_{h^{-1}}\varphi$.  Hence \eqref{eq:Uf-Aplus-new} holds.

If $G\in\BMOA_0(\Omega_{-})$, write $G=\Psi\circ f^{-1}$ with $\Psi\in\BMOA_0(\Ds)$.  Then
\[
 (U_fG^*)(\zeta)=G^*(f(\zeta))=\Psi(\zeta)
\]
for almost every $\zeta$, so $U_f(\BminusGamma)\subset \Bminus$.  Conversely every element of $\Bminus$ is the trace of a unique $\Psi\in\BMOA_0(\Ds)$, and $G=\Psi\circ f^{-1}$ gives the reverse inclusion.  This proves \eqref{eq:Uf-Aminus-new}.
\end{proof}

Applying $U_f$ to \eqref{eq:curve-BMO-direct-sum-new} yields
\begin{equation}\label{eq:circle-direct-sum-hinv-new}
 \boxed{\quad
 \BMO(\T)=C_{h^{-1}}(\Bplus)\dotplus \Bminus.
 \quad}
\end{equation}
By Theorem~\ref{thm:transversality}, this is equivalent to invertibility of $A_{h^{-1}}$.  Proposition~\ref{prop:inverse-symmetry} then gives invertibility of $A_h$.

For later reference, define
\begin{equation}\label{eq:Qhinv-geometric}
 Q_{h^{-1}}=U_fQ_\Gamma^{+}U_f^{-1}.
\end{equation}
It is the bounded projection onto $C_{h^{-1}}(\Bplus)$ along $\Bminus$.  The proof of Theorem~\ref{thm:transversality} gives the explicit inverse
\begin{equation}\label{eq:Ah-inverse-geometric}
 A_{h^{-1}}^{-1}=C_hQ_{h^{-1}}\big|_{\Bplus}.
\end{equation}
In particular,
\begin{equation}\label{eq:quantitative-direct-chordarc}
 \|\varphi\|_{\BMOA}
 \le \|C_h\|\,\|Q_{h^{-1}}\|\,
 \|A_{h^{-1}}\varphi\|_{\BMOA}.
\end{equation}
We have proved the following theorem.

\begin{theorem}\label{thm:chordarc-implies-projection-direct}
If $\Gamma$ is chord-arc, then $A_h$ and $A_{h^{-1}}$ are bounded isomorphisms on $\Bplus$, equivalently on BMOA modulo constants.
\end{theorem}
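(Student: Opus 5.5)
The argument chains together results already established in this section. The route is: first pass from the curve to the circle via the exterior pull-back $U_f$, then use the transversality criterion, and finally apply the inverse symmetry.

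\emph{Step 1 (strong quasisymmetry).} Since $\Gamma$ is chord-arc, Lavrentiev's theorem gives \eqref{eq:Lavrentiev-both}. Hence $\alpha,\beta\in\SQS(\T)$ by \eqref{eq:alpha-beta-SQS-new}. From \eqref{eq:h-beta-alpha}, $h=\beta^{-1}\circ\alpha$ and $h^{-1}=\alpha^{-1}\circ\beta$. The class $\SQS(\T)$ is a group, because compositions and inverses of $A_\infty$-type homeomorphisms remain strongly quasisymmetric. Therefore $h,h^{-1}\in\SQS(\T)$, and by Jones's theorem $C_h$ and $C_{h^{-1}}$ are bounded on $\BMO$. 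Consequently $A_h$ and $A_{h^{-1}}$ are bounded operators on $\Bplus$.

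\emph{Step 2 (transfer of the jump decomposition).} By \eqref{eq:BMO-curve-pullback-equivalence} and \eqref{eq:Uf-factorization-new}, the map $U_f=C_\beta U_\sigma$ is a bounded isomorphism $\BMO(\Gamma)\to\BMO(\T)$. Bounded isomorphisms carry topological direct sums to topological direct sums. So applying $U_f$ to the BMO jump theorem \eqref{eq:curve-BMO-direct-sum-new} and using Proposition~\ref{prop:pullback-curve-spaces-new}, we obtain \eqref{eq:circle-direct-sum-hinv-new}, namely $\BMO(\T)=C_{h^{-1}}(\Bplus)\dotplus\Bminus$. The associated bounded projection is $Q_{h^{-1}}=U_fQ_\Gamma^{+}U_f^{-1}$.

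\emph{Step 3 (conclusion).} Theorem~\ref{thm:transversality} applies because $h^{-1}\in\SQS(\T)$. Condition (ii) there, with $h$ replaced by $h^{-1}$, is exactly the splitting from Step~2, so $A_{h^{-1}}$ is a bounded isomorphism. Its inverse is given explicitly by \eqref{eq:Ah-inverse-geometric}. Proposition~\ref{prop:inverse-symmetry} then gives that $A_h$ is invertible as well. Since $\Bplus$ is $\BMOA(\D)$ modulo constants, both operators are isomorphisms of $\BMOA$ modulo constants.

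The main point to check is the identification in Proposition~\ref{prop:pullback-curve-spaces-new}. One must confirm that the pull-back identity $U_fF^*=C_{h^{-1}}\varphi$ holds almost everywhere, which requires knowing that nontangential boundary values on $\Gamma$ correspond under $g$ and $f$. This is where absolute continuity from Lavrentiev's theorem is used. Everything else is formal once the jump theorem is granted.
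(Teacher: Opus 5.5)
Your proposal is correct and follows essentially the same route as the paper. Lavrentiev's theorem gives $\alpha,\beta\in\SQS(\T)$ and hence $h,h^{-1}\in\SQS(\T)$; the jump decomposition $\BMO(\Gamma)=\BplusGamma\dotplus\BminusGamma$ is then transported by $U_f=C_\beta U_\sigma$ via Proposition~\ref{prop:pullback-curve-spaces-new} to $\BMO(\T)=C_{h^{-1}}(\Bplus)\dotplus\Bminus$, and Theorem~\ref{thm:transversality} together with Proposition~\ref{prop:inverse-symmetry} finishes the argument, with your closing remark on the a.e.\ boundary identification making explicit a point the paper leaves implicit (deferring to \cite{LiuShenFaber}).
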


\section{Logarithmic jumps and rectifiability}\label{sec:logarithmic-jumps}

\subsection{A two-point logarithmic jump}
We now prove the converse without assuming that $\Gamma$ is rectifiable. Let $a,b\in\Gamma$, $a\ne b$, and let $E(a,b)$ be the closed boundary subarc traversed from $a$ to $b$ in the orientation induced by $f|_{\T}$.

\begin{lemma}[Two-point logarithmic jump]\label{lem:two-point-jump-audited}
The domain $\widehat\C\setminus E(a,b)$ admits a single-valued analytic branch
\begin{equation}\label{eq:two-point-log-definition}
 l_{a,b}(w)=\log\frac{w-b}{w-a},
 \qquad l_{a,b}(\infty)=0.
\end{equation}
Write $l_{a,b}^{+}$ and $l_{a,b}^{-}$ for its restrictions to $\Omega_{+}$ and $\Omega_{-}$. With the orientation convention above, their non-tangential boundary values satisfy
\begin{equation}\label{eq:two-point-jump-audited}
 (l_{a,b}^{+})^*-(l_{a,b}^{-})^*
 =2\pi i\,\chi_{E(a,b)}
\end{equation}
away from $a$ and $b$, while
\begin{equation}\label{eq:two-point-derivative-audited}
 (l_{a,b}^{+})'(w)
 =\frac{b-a}{(w-b)(w-a)}.
\end{equation}
Moreover,
\[
 l_{a,b}^{+}\circ g\in\BMOA(\D),
 \qquad
 l_{a,b}^{-}\circ f\in\BMOA_0(\Ds).
\]
\end{lemma}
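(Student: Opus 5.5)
The plan is to handle the three assertions separately: existence of the branch, the jump relation, and membership in BMOA.

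\emph{Single-valued branch.} The domain $\widehat\C\setminus E(a,b)$ is simply connected in the Riemann sphere, since the complement of a Jordan arc in $\widehat\C$ is connected and simply connected. The function $(w-b)/(w-a)$ is holomorphic and nonvanishing there, including near $\infty$, where it tends to $1$. Hence a holomorphic logarithm exists, and we normalize it by $l_{a,b}(\infty)=0$. Formula \eqref{eq:two-point-derivative-audited} then follows from direct differentiation: $1/(w-b)-1/(w-a)=(b-a)/((w-b)(w-a))$.

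\emph{Jump relation.} Away from $a,b$, each point of $\Gamma\setminus E(a,b)$ lies in the domain of $l_{a,b}$. The function is therefore continuous across it, and the two boundary values agree. For a point $w_0$ in the interior of $E(a,b)$, I will compute the increment of $\arg\frac{w-b}{w-a}$ along a small loop. The loop starts in $\Omega_+$ near $w_0$, crosses $E(a,b)$ once, and is then closed through the complement of $E(a,b)$ by going around the endpoint $b$, say. Equivalently, the difference of the two boundary values is $2\pi i$ times the winding number of a closed curve formed from the arc $E(a,b)$ and a path from $b$ back to $a$ inside $\Omega_-$. That winding number is $\pm1$ around points near $w_0$, and the sign is fixed by the positive orientation: $\Omega_+$ lies to the left when $\Gamma$ is traversed as the image of $f|_\T$. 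Since $l_{a,b}$ extends continuously from each side of the open arc, the non-tangential limits exist and equal these one-sided values. Their difference is the constant $2\pi i$ on $E(a,b)$ and $0$ off it. The orientation check, namely confirming $+2\pi i$ rather than $-2\pi i$ with $E(a,b)$ defined via $f$, is the main bookkeeping point.

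\emph{BMOA membership.} This is the substantive step. Write $l^+_{a,b}=\log(w-b)-\log(w-a)$ locally in $\Omega_+$. On $\Omega_+$ a single-valued branch of $\log(w-a)$ exists, because $a\notin\Omega_+$ and $\Omega_+$ is simply connected. It therefore suffices to show that $\log(g(z)-a)\in\BMOA(\D)$ for $a=g(\zeta_0)\in\Gamma$. The function $\log\bigl((g(z)-a)/(z-\zeta_0)\bigr)$ is a well-known BMOA (indeed Bloch-plus) object for quasidisks. Rather than rely on that, I will use the Carleson criterion \eqref{eq:Carleson-BMOA-audited} with
\[
F'(z)=\frac{g'(z)}{g(z)-a}.
\]
By Koebe distortion, $|g'(z)|(1-|z|)\asymp\dist(g(z),\Gamma)\le|g(z)-a|$, so $F$ is Bloch. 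For the Carleson estimate, I will use the quasidisk property. The preimage under $g$ of the annuli $\{2^{-k-1}d\le|w-a|<2^{-k}d\}$ gives a decomposition of $\D$ into regions accumulating at $\zeta_0$, in Whitney-like fashion. On each such region,
\[
\int|F'|^2(1-|z|^2)\,dA\lesssim\int|g'|^2/|g-a|^2\,dA\cdot\text{(size)}\lesssim\text{area}(\text{annulus})/(2^{-k}d)^2\cdot(\text{scale}),
\]
which sums geometrically over the regions meeting a box $S(I)$. Alternatively, and more cleanly: $(\log(g-a))\circ\phi$ composed with disk automorphisms is uniformly in $H^2$, because $\log$ of a quasiconformally extendable univalent map centered at a boundary point is uniformly controlled, by the quasisymmetric (doubling) behaviour of $|g-a|$ along $\T$. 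The boundary function $\log|g(\zeta)-a|$ is BMO on $\T$ since $|g(\zeta)-a|$ is comparable to a power-type doubling weight, by quasisymmetry of $g|_\T$ viewed as a map into a quasicircle. The argument $\arg(g-a)$ is bounded on $\T$ up to the jump at $\zeta_0$, because quasicircles have bounded turning and hence bounded spiraling is excluded. I expect that winding bound to be the main obstacle: quasicircles can spiral logarithmically, but the spiraling is $O(\log)$, which is still BMO. The exterior case $l^-_{a,b}\circ f\in\BMOA_0(\Ds)$ follows by the same argument applied to $f$ after inversion, using $l_{a,b}(\infty)=0$.
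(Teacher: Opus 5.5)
Your treatment of the branch, of \eqref{eq:two-point-derivative-audited}, and of the jump relation agrees with the paper's: simple connectivity of $\widehat\C\setminus E(a,b)$, normalization at $\infty$, and a one-turn winding count whose sign is fixed by the orientation. The gap is the BMOA membership. You rightly call it the substantive step, but you do not actually prove it.

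Your Carleson sketch treats the sets $g^{-1}(\{2^{-k-1}d\le|w-a|<2^{-k}d\})$ as ``Whitney-like'' and asserts that their contributions ``sum geometrically.'' These sets reach $\T$ away from $\zeta_0$. The estimate only closes once you prove two things, and neither is supplied:
\begin{enumerate}
\item each such set is comparable to a half-annulus $\{c s_k\le|z-\zeta_0|\le C s_k\}$ with $s_k$ decaying geometrically, which is a two-sided quasisymmetry statement for a quasiconformal extension of $g$;
\item the boxes $S(I)$ with $\dist(\zeta_0,I)\gg|I|$ are handled separately, using $|g-a|\gtrsim\diam g(S(I))$ there instead.
\end{enumerate}

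Your second sketch contains a false step. Bounded turning does \emph{not} exclude spiraling: quasicircles can wind infinitely often around a boundary point, for example the image of a line under the quasiconformal map $z\mapsto z|z|^{i\beta}$. You concede this a sentence later. Moreover, suppose $\log|g-a|$ and $\arg(g-a)$ do have BMO boundary values. This gives $\log(g-a)\in\BMOA(\D)$ only after you know that $\log(g-a)$ is recovered from its boundary values, for instance that it lies in $H^1$. The Bloch property does not give this, and you never address it.

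The missing idea is that neither quasidisk geometry nor the splitting into $\log(w-b)-\log(w-a)$ is needed. You even mention the relevant known fact and then set it aside. The paper observes that $(g-b)/(g-a)$ is univalent on $\D$, being a M\"obius image of $g$, and zero-free, because $b\notin\Omega_+$. Its logarithm therefore belongs to $\BMOA$ by the classical theorem on logarithms of zero-free univalent functions, which the paper quotes from Girela.

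The exterior case is the same. The function $z\mapsto (f(1/z)-b)/(f(1/z)-a)$ is univalent and zero-free on $\D$ with value $1$ at $0$, and the reflection in the definition of $\BMOA_0(\Ds)$ does the rest. This also repairs a weak point in your reduction. The function $\log(w-a)$ has no single-valued branch on $\Omega_-$, so ``the same argument applied to $f$ after inversion'' needs an extra M\"obius change of variables, such as $w\mapsto 1/(w-c)$ with $c\in\Omega_+$, which you do not specify.

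If you want to keep your own route instead, you must supply the quasisymmetry estimates above, including a logarithmic bound on spiraling, together with a Hardy-space membership argument.
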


\begin{proof}
The complement in the Riemann sphere of a closed Jordan arc is simply connected. The function $(w-b)/(w-a)$ has neither a zero nor a pole there and tends to $1$ at infinity, so it has a unique analytic logarithm normalized by $l_{a,b}(\infty)=0$. Its restrictions to the two complementary domains are therefore the two branches required above.

Across the cut $E(a,b)$, continuation from the exterior side to the interior side makes one positive turn around the origin in the image of $(w-b)/(w-a)$. Hence the argument increases by $2\pi$, which gives \eqref{eq:two-point-jump-audited}. On the other component of $\Gamma\setminus\{a,b\}$ no cut is crossed, so the two boundary values agree. Differentiating \eqref{eq:two-point-log-definition} gives \eqref{eq:two-point-derivative-audited}.

Only the BMOA membership requires an analytic input. The functions
\[
 \frac{g-b}{g-a}\quad\text{and}\quad
 \frac{f-b}{f-a}
\]
are zero-free univalent functions in the corresponding disk charts. Girela's theorem on logarithms of zero-free univalent functions \cite{Girela} implies that their analytic logarithms belong to BMOA. For the exterior function one uses the analytic disk chart obtained by composing with $z\mapsto 1/\bar z$ and then taking complex conjugates, together with the normalization at infinity.   This proves the final assertion.
\end{proof}

\subsection{Finite jump data}\label{sec:discrete-jumps}
Fix a positively oriented exterior parameter arc $I\subset\T$ and a finite ordered partition
\[
 \zeta_0,\zeta_1,\ldots,\zeta_N.
\]
Put
\[
 z_j=f(\zeta_j),
 \qquad
 \tau_j=\frac{z_{j+1}-z_j}{|z_{j+1}-z_j|},
 \qquad 0\le j<N.
\]
For this fixed partition, define
\begin{equation}\label{eq:uP-audited}
 u(\zeta)=
 \begin{cases}
 \overline{\tau_j},&\zeta\in[\zeta_j,\zeta_{j+1}],\\
 0,&\zeta\notin I.
 \end{cases}
\end{equation}
Then $|u|\le1$, and therefore
\begin{equation}\label{eq:uP-bound-audited}
 \|u\|_{\BMO}\le2,
 \qquad
 \|P_{+}u\|_{\BMOA}\le C.
\end{equation}

Let $l_j^{\pm}=l_{z_j,z_{j+1}}^{\pm}$ be the logarithms from Lemma~\ref{lem:two-point-jump-audited}, and set
\begin{equation}\label{eq:LambdaP-audited}
 L^{\pm}(w)
 =\frac1{2\pi i}\sum_{j=0}^{N-1}\overline{\tau_j}\,l_j^{\pm}(w),
 \qquad
 \psi=L^{+}\circ g,
 \qquad
 v=L^{-}\circ f.
\end{equation}
The sum is finite, so $\psi\in\Bplus$ and $v\in\Bminus$. Summing the jump relations gives, for almost every $\zeta\in\T$,
\[
 \psi(h^{-1}(\zeta))-v(\zeta)=u(\zeta).
\]
Applying $P_{+}$ yields the projected composition equation
\begin{equation}\label{eq:projection-equation-audited}
 A_{h^{-1}}\psi=P_{+}u.
\end{equation}
Consequently, the lower bound \eqref{eq:lower-Ahinv} gives the partition-independent estimate
\begin{equation}\label{eq:uniform-Psi-audited}
 \|\psi\|_{\BMOA}\le Cc^{-1}.
\end{equation}

 By \eqref{eq:two-point-derivative-audited},
\begin{align}
 (L^{+})'(w)
 &=\frac1{2\pi i}\sum_{j=0}^{N-1}
 \overline{\tau_j}\,
 \frac{z_{j+1}-z_j}{(w-z_{j+1})(w-z_j)}\notag\\
 &=\frac1{2\pi i}\sum_{j=0}^{N-1}
 \frac{|z_{j+1}-z_j|}{(w-z_{j+1})(w-z_j)}.
 \label{eq:positive-derivative-audited}
\end{align}

\subsection{Rectifiability}\label{sec:rectifiability-discrete}
We use a single observation point to control polygonal variation on a finite cover, thereby creating arclength.
\begin{lemma}[Fixed-point cone estimate]\label{lem:fixed-cone-audited}
Fix $w_0\in\Omega_{+}$ and write
\[
 d_0=\dist(w_0,\Gamma),\qquad
 R_0=\max_{z\in\Gamma}|w_0-z|.
\]
There are $\varepsilon_0>0$ and $c_0>0$, depending only on $d_0/R_0$, such that whenever a connected subarc $E\subset\Gamma$ satisfies
\[
 \diam E\le\varepsilon_0d_0,
\]
there is a unimodular number $\lambda_E$ for which
\begin{equation}\label{eq:fixed-cone-audited}
 \operatorname{Re}\frac{\lambda_E}{(w_0-z')(w_0-z)}
 \ge\frac{c_0}{R_0^2}
\end{equation}
for all $z,z'\in E$.
\end{lemma}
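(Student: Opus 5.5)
Fix a base point $z_E\in E$ and choose $\lambda_E$ to cancel the argument of the model quantity $(w_0-z_E)^{-2}$:
\[
 \lambda_E=\frac{(w_0-z_E)^2}{|w_0-z_E|^2}.
\]
This choice gives
\[
 \frac{\lambda_E}{(w_0-z_E)^2}=\frac{1}{|w_0-z_E|^2}\ge\frac1{R_0^2}.
\]
The point is that the map $(z,z')\mapsto 1/((w_0-z')(w_0-z))$ varies little on $E\times E$. The reason is that $E$ is tiny compared with the distance from $w_0$ to $\Gamma$. So it suffices to control the relative perturbation.

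For $z,z'\in E$ write
\[
 \frac{(w_0-z_E)^2}{(w_0-z)(w_0-z')}
 =\Bigl(1-\frac{z-z_E}{w_0-z_E}\Bigr)^{-1}\Bigl(1-\frac{z'-z_E}{w_0-z_E}\Bigr)^{-1}.
\]
We have $|z-z_E|\le\diam E\le\varepsilon_0 d_0$ and $|w_0-z_E|\ge d_0$. Hence each quotient $\delta=(z-z_E)/(w_0-z_E)$ satisfies $|\delta|\le\varepsilon_0$. Take $\varepsilon_0=1/10$; in fact an absolute constant suffices. Then the elementary bound $|(1-\delta)^{-1}-1|\le|\delta|/(1-|\delta|)$ shows that the product above equals $1+\eta$ with $|\eta|\le 3\varepsilon_0\le 1/2$. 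Therefore
\[
 \operatorname{Re}\frac{\lambda_E}{(w_0-z')(w_0-z)}
 =\frac{\operatorname{Re}(1+\eta)}{|w_0-z_E|^2}
 \ge\frac{1}{2R_0^2},
\]
using $|w_0-z_E|\le R_0$. This gives \eqref{eq:fixed-cone-audited} with $c_0=1/2$.

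There is no real obstacle here. The only care needed is in the choice of $\varepsilon_0$, so that the two relative perturbations stay in a half-plane cone around $1$. Note that with this argument the constants are in fact absolute. The dependence on $d_0/R_0$ allowed in the statement is only needed if one normalizes the lower bound in terms of $d_0$ rather than $R_0$, and the proof above already delivers the stronger form.
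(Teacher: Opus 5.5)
Your proof is correct and follows essentially the same route as the paper: the same base point $z_E$, the same unimodular choice $\lambda_E=(w_0-z_E)^2/|w_0-z_E|^2$, and the same bounds $d_0\le|w_0-z_E|\le R_0$. The only difference is cosmetic: you control the perturbation algebraically through $(1-\delta)^{-1}(1-\delta')^{-1}=1+\eta$, whereas the paper uses cone apertures ($\pi/12$ for the factors, $\pi/6$ for the products), and your remark that the constants are absolute also holds for the paper's own choice $\varepsilon_0<1/20$, $c_0\approx\cos(\pi/6)/(1+\varepsilon_0)^2$.
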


\begin{proof}
Choose $z_E\in E$ and put $v=w_0-z_E$. Then $d_0\le|v|\le R_0$, and
\[
 |(w_0-z)-v|\le\diam E\le\varepsilon_0d_0\le\varepsilon_0|v|,
 \qquad z\in E.
\]
Take $\varepsilon_0<1/20$. All vectors $w_0-z$, $z\in E$, then lie in a cone of aperture less than $\pi/12$ about $v$, and their moduli lie between $(1-\varepsilon_0)|v|$ and $(1+\varepsilon_0)|v|$. Their pairwise products lie in a cone of aperture less than $\pi/6$ about $v^2$. With
\[
 \lambda_E=\frac{v^2}{|v|^2},
\]
we obtain
\[
 \operatorname{Re}\frac{\lambda_E}{(w_0-z')(w_0-z)}
 \ge\frac{\cos(\pi/6)}{(1+\varepsilon_0)^2|v|^2}
 \ge\frac{c_0}{R_0^2}.
\]
\end{proof}

\begin{theorem}[Rectifiability from the lower bound]\label{thm:rectifiability-discrete}
If \eqref{eq:lower-Ahinv} holds, then $f|_{\T}$ has bounded variation and $\Gamma$ is rectifiable.
\end{theorem}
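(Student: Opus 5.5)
Fix $w_0\in\Omega_+$ and take $\varepsilon_0,c_0$ from Lemma~\ref{lem:fixed-cone-audited}. Since $f|_{\T}$ is uniformly continuous, I will cover $\T$ by finitely many closed parameter arcs $I_1,\dots,I_K$ whose images $E_k=f(I_k)$ satisfy $\diam E_k\le\varepsilon_0d_0$. It then suffices to bound the variation of $f$ on each $I_k$ separately. That is, for every finite ordered partition $\zeta_0,\dots,\zeta_N$ of $I=I_k$, I want
\[
\sum_{j}|z_{j+1}-z_j|\le C(c,w_0,\Gamma),
\]
with a constant independent of the partition. Summing over $k$ then shows that $f|_{\T}$ has bounded variation, so $\Gamma=f(\T)$ is rectifiable.

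For a given partition of $I$, I form $u$, $L^\pm$ and $\psi=L^+\circ g$ as in Section~\ref{sec:discrete-jumps}. By \eqref{eq:uniform-Psi-audited}, $\|\psi\|_{\BMOA}\le Cc^{-1}$, uniformly in the partition. Put $a_0=g^{-1}(w_0)$. The Bloch embedding \eqref{eq:Bloch-BMOA-audited} gives
\[
(1-|a_0|^2)|\psi'(a_0)|\le C\|\psi\|_{\BMOA}\le C'c^{-1}.
\]
Since $\psi'(a_0)=(L^+)'(w_0)\,g'(a_0)$ and $g'(a_0)\neq0$ is a fixed quantity depending only on $w_0$ and $g$, this yields $|(L^+)'(w_0)|\le C''$, again independent of the partition.

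Next I evaluate \eqref{eq:positive-derivative-audited} at $w_0$:
\[
2\pi i\,(L^+)'(w_0)=\sum_j\frac{|z_{j+1}-z_j|}{(w_0-z_{j+1})(w_0-z_j)}.
\]
All $z_j$ lie in $E_k$, so Lemma~\ref{lem:fixed-cone-audited} supplies a unimodular $\lambda=\lambda_{E_k}$ such that every term, after multiplication by $\lambda$, has real part at least $c_0R_0^{-2}|z_{j+1}-z_j|$. Multiplying by $\lambda$ and taking real parts therefore gives
\[
\frac{c_0}{R_0^2}\sum_j|z_{j+1}-z_j|\le 2\pi|(L^+)'(w_0)|\le 2\pi C'',
\]
which is the desired bound on the variation.

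The step I expect to need the most care is checking that the constants are genuinely independent of the partition. The bound on $\|P_+u\|_{\BMOA}$ in \eqref{eq:uP-bound-audited} uses only $|u|\le1$, and the lower bound \eqref{eq:lower-Ahinv} then controls $\|\psi\|_{\BMOA}$. The projection identity \eqref{eq:projection-equation-audited} requires $\psi\in\Bplus$, which holds because each $l_j^+\circ g$ is in BMOA by Lemma~\ref{lem:two-point-jump-audited} and the sum is finite. Since BMOA is taken modulo constants, I only ever use the derivative $\psi'$, so the estimate is insensitive to the constant ambiguity in $\psi$. Finally, the orientation convention in the jump relation fixes the sign of $u$, but only the modulus $|(L^+)'(w_0)|$ enters the argument, so the sign does not matter.
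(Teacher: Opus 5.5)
Your proposal is correct and follows the paper's proof essentially step for step. Both use a fixed interior point $w_0=g(z_0)$ and a finite cover of $\T$ by parameter arcs whose images have diameter at most $\varepsilon_0 d_0$. Both apply the common-cone lemma to \eqref{eq:positive-derivative-audited} to bound the polygonal length by $2\pi R_0^2 c_0^{-1}|(L^+)'(w_0)|$. Both then use the Bloch embedding, \eqref{eq:uniform-Psi-audited} and the chain rule to bound $|(L^+)'(w_0)|$ independently of the partition.
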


\begin{proof}
Fix $z_0\in\D$ and put $w_0=g(z_0)$. Let $d_0$ and $R_0$ be as in Lemma~\ref{lem:fixed-cone-audited}. By uniform continuity of $f|_{\T}$, there are finitely many closed arcs $I_1,\ldots,I_M$ whose interiors cover $\T$ and such that
\[
 \diam f(I_k)\le\varepsilon_0d_0,
 \qquad 1\le k\le M.
\]
Fix $k$ and an arbitrary ordered partition of $I_k$. In the notation of Section~\ref{sec:discrete-jumps}, put
\[
 L=\sum_j|z_{j+1}-z_j|.
\]
The cone lemma gives a unimodular number $\lambda_k$ for $f(I_k)$. Multiplying \eqref{eq:positive-derivative-audited} by $2\pi i\lambda_k$ and taking real parts yields
\[
 |(L^{+})'(w_0)|
 \ge \frac{c_0}{2\pi R_0^2}\sum_j|z_{j+1}-z_j|
 =\frac{c_0}{2\pi R_0^2}L.
\]
On the other hand, $\psi=L^{+}\circ g$, and hence
\[
 \psi'(z_0)=(L^{+})'(w_0)g'(z_0).
\]
The Bloch estimate \eqref{eq:Bloch-BMOA-audited} and the uniform bound \eqref{eq:uniform-Psi-audited} give
\[
 |(L^{+})'(w_0)|
 \le\frac{C}{(1-|z_0|^2)|g'(z_0)|}\,c^{-1}.
\]
Therefore
\[
 \sum_j|z_{j+1}-z_j|\le C_k,
\]
where $C_k$ is independent of the partition. Thus, $f\vert{}_{I_k}$ is of bounded variation, which implies that $f\vert{}_{\mathbb{T}}$ has finite total variation. Since $f$ is continuous and injective on $\mathbb{T}$, its image $\Gamma$ is a rectifiable Jordan curve, and its total variation is equal to the length of $\Gamma$.
\end{proof}

\section{Localization and the chord-arc estimate}\label{sec:alternative-whitney}

The fixed-point argument above detects finite total variation but is not scale invariant. We now place the observation region at the scale of an arbitrary boundary subarc. A comparison ball in $\Omega_{+}$ replaces the fixed point, while the Bloch estimate is replaced by the local Dirichlet-energy estimate in Lemma~\ref{lem:hyperbolic-energy-audited}. The resulting bound is
\[
 \length g(J)\lesssim\diam g(J),
\]
uniformly over all sufficiently short arcs $J\subset\T$.

\subsection{Comparison balls in the quasidisk}

We use the standard fact that every quasidisk is a uniform domain \cite{GehringOsgood}. Recall that a domain $\Omega$ is $C_u$-uniform if every pair $x,y\in\Omega$ can be joined by a rectifiable curve $\gamma\subset\Omega$ such that
\[
 \length\gamma\le C_u|x-y|
\]
and
\[
 \dist(\omega,\partial\Omega)
 \ge C_u^{-1}\min\{\length\gamma[x,\omega],
                    \length\gamma[\omega,y]\},
 \qquad \omega\in\gamma.
\]

\begin{lemma}[Interior corkscrew]\label{lem:corkscrew-audited}
There exist constants $c_\Gamma\in(0,1)$ and $r_\Gamma>0$ such that, for every $\xi\in\Gamma$ and every $0<r<r_\Gamma$, one can find $w\in\Omega_{+}$ satisfying
\begin{equation}\label{eq:corkscrew-audited}
 |w-\xi|<r,
 \qquad
 \dist(w,\Gamma)\ge c_\Gamma r.
\end{equation}
\end{lemma}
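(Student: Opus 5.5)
The plan is to use the uniformity of the quasidisk $\Omega_{+}$ recalled above, which is available because $\Gamma$ is a quasicircle (its welding is quasisymmetric, and in particular strongly quasisymmetric). Let $C_u$ be the uniformity constant of $\Omega_{+}$. Put $r_\Gamma=\diam\Gamma/4$ and take $c_\Gamma$ of order $C_u^{-2}$; the precise value is fixed at the end. Fix $\xi\in\Gamma$ and $0<r<r_\Gamma$.

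\emph{Step 1: two interior points near $\xi$ that are far apart.} Since $\Gamma$ is a Jordan curve of diameter $>4r$, the arc of $\Gamma$ starting at $\xi$ must leave $B(\xi,r/2)$. By continuity it contains a point $\xi'\in\Gamma$ with $|\xi'-\xi|=r/(4C_u)$. Both $\xi$ and $\xi'$ are boundary points of $\Omega_{+}$, so we can choose $x,y\in\Omega_{+}$ with
\[
|x-\xi|<\delta,\qquad |y-\xi'|<\delta,\qquad \delta=\frac{r}{100C_u}.
\]
Then
\[
|x-y|\ge \frac{r}{4C_u}-2\delta\ge\frac{r}{8C_u},\qquad |x-y|\le\frac{r}{4C_u}+2\delta\le\frac{r}{2C_u}.
\]

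\emph{Step 2: the midpoint of a uniform curve.} Join $x$ to $y$ by a uniform curve $\gamma\subset\Omega_{+}$. Then
\[
\length\gamma\le C_u|x-y|\le \frac r2.
\]
Let $w\in\gamma$ be the point that splits $\gamma$ into two halves of equal length. Each half has length at least $\tfrac12|x-y|\ge r/(16C_u)$, since each half has length at least the distance from its endpoint to $w$ and these two distances sum to at least $|x-y|$. The uniformity condition then gives
\[
\dist(w,\Gamma)\ge C_u^{-1}\cdot\frac{r}{16C_u}=\frac{r}{16C_u^2}.
\]
For the position of $w$,
\[
|w-\xi|\le|w-x|+|x-\xi|\le\length\gamma+\delta\le \frac r2+\frac{r}{100}<r.
\]
Hence $c_\Gamma=1/(16C_u^2)$ works, and it clearly lies in $(0,1)$.

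\emph{Main obstacle.} The only delicate point is Step 1. A corkscrew point is obtained from the lower bound on the half-lengths, and that bound needs $|x-y|$ to be comparable to $r$ both from below and from above. The upper bound keeps $w$ inside $B(\xi,r)$, and the lower bound gives a macroscopic distance to $\Gamma$. Producing such a pair requires a boundary point $\xi'$ at a prescribed distance from $\xi$, and this is exactly where connectedness of $\Gamma$ and the restriction $r<r_\Gamma$ enter. Everything else is a direct application of the definition of a uniform domain together with the Gehring–Osgood fact that quasidisks are uniform.
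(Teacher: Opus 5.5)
Your argument is correct, but it takes a different route from the paper's.

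\textbf{The paper's route.} It fixes a deep interior point $x_*$ with $\delta_*=\dist(x_*,\Gamma)$ and takes $r_\Gamma<\delta_*/4$. It joins a point $x$ with $|x-\xi|<r/8$ to $x_*$ by a uniform curve, and stops at the first point $w$ on that curve with $|w-\xi|=r/2$. The stopping rule localizes $w$ automatically, so only the distance half of the uniformity condition is used. Both sub-arcs are long (lengths $>3r/8$ and $>r$), which gives the constant $c_\Gamma=3/(8C_u)$.

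\textbf{Your route.} You build a short uniform curve between interior points near two boundary points at mutual distance $r/(4C_u)$, and take its arclength midpoint. Here you genuinely need the length bound $\length\gamma\le C_u|x-y|$ to keep $w$ inside $B(\xi,r)$. To make that work you shrink the scale by a factor $C_u$, and the price is the weaker constant $c_\Gamma=1/(16C_u^2)$. In exchange, your construction is purely local. It needs no reference point, and the admissible range of $r$ is tied only to $\diam\Gamma$ (via connectedness of $\Gamma$), not to the depth of a chosen interior point. The later use of the lemma only needs some fixed $c_\Gamma$, so either version suffices.

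\textbf{A wording slip.} You wrote that the welding is ``quasisymmetric, and in particular strongly quasisymmetric.'' The implication runs the other way ($\SQS\subset\mathrm{QS}$). In any case, the fact you actually need is that $\Gamma$ is a quasicircle, so that $\Omega_+$ is uniform, and that is a standing assumption of the paper.
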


\begin{proof}
Fix $x_*\in\Omega_{+}$ and set $\delta_*=\dist(x_*,\Gamma)$. Choose
\[
 r_\Gamma<\delta_*/4.
\]
Given $\xi\in\Gamma$ and $0<r<r_\Gamma$, select $x\in\Omega_{+}$ with $|x-\xi|<r/8$. Let $\gamma\subset\Omega_{+}$ be a $C_u$-uniform curve joining $x$ to $x_*$. Since
\[
 |x_*-\xi|\ge\delta_*>4r,
\]
there is a first point $w\in\gamma$, measured from $x$, for which $|w-\xi|=r/2$. The initial portion of $\gamma$ has length at least
\[
 |w-x|\ge |w-\xi|-|x-\xi|>3r/8,
\]
whereas the terminal portion has length at least
\[
 |x_*-w|\ge |x_*-\xi|-|w-\xi|
 >\delta_*-r/2>r.
\]
The uniform-domain condition therefore gives
\[
 \dist(w,\Gamma)\ge \frac{3r}{8C_u}.
\]
Thus \eqref{eq:corkscrew-audited} holds with $c_\Gamma=3/(8C_u)$.
\end{proof}

\begin{lemma}[Pull-back of an interior Whitney ball]\label{lem:pullback-ball-audited}
Let $w\in\Omega_{+}$, put
\[
 z=g^{-1}(w),
 \qquad
 \delta=\dist(w,\Gamma),
\]
and fix $0<\varepsilon<1/4$. Then
\begin{equation}\label{eq:pullback-hyperbolic-ball}
 g^{-1}\bigl(B(w,\varepsilon\delta)\bigr)
 \subset B_{\D}^{\mathrm{hyp}}(z,R_\varepsilon),
 \qquad
 R_\varepsilon=\frac{2\varepsilon}{1-\varepsilon}.
\end{equation}
Consequently, if $\Lambda$ is holomorphic in $\Omega_{+}$ and $\Lambda\circ g\in\BMOA(\D)$, then
\begin{equation}\label{eq:physical-energy-audited}
 \int_{B(w,\varepsilon\delta)}
 |\Lambda'(\omega)|^2\,dA(\omega)
 \le C_\varepsilon\|\Lambda\circ g\|_{\BMOA}^{\,2}.
\end{equation}
\end{lemma}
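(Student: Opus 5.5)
The first claim, \eqref{eq:pullback-hyperbolic-ball}, will follow from the Koebe-type comparison between the hyperbolic metric of $\Omega_{+}$ and the distance to the boundary. The second claim, \eqref{eq:physical-energy-audited}, will then follow from conformal invariance of the Dirichlet integral together with Lemma~\ref{lem:hyperbolic-energy-audited}.

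\emph{Step 1.} Let $\rho_{\Omega_+}(\omega)|d\omega|$ denote the hyperbolic metric of $\Omega_{+}$, normalized compatibly with the metric on $\D$ used in Lemma~\ref{lem:hyperbolic-energy-audited}. Since $g$ is an isometry onto $\Omega_{+}$, the pull-back of $B(w,\varepsilon\delta)$ lies in the hyperbolic disk of radius equal to the hyperbolic diameter-from-$w$ of $B(w,\varepsilon\delta)$. For this I use the standard upper bound for simply connected domains,
\[
 \rho_{\Omega_+}(\omega)\le\frac{2}{\dist(\omega,\Gamma)}
\]
in the normalization $\rho_\D(z)=2/(1-|z|^2)$. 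This comes from comparing $\Omega_{+}$ with the disk $B(\omega,\dist(\omega,\Gamma))\subset\Omega_{+}$ via the Schwarz--Pick lemma.

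\emph{Step 2.} For $\omega\in B(w,\varepsilon\delta)$, integrate along the segment from $w$ to $\omega$. On this segment the distance to $\Gamma$ is at least $\delta-t$, where $t$ is the distance from $w$. The hyperbolic distance is therefore at most
\[
 \int_0^{\varepsilon\delta}\frac{2\,dt}{\delta-t}
 =2\log\frac{1}{1-\varepsilon}.
\]
Using $\log\frac1{1-\varepsilon}\le\frac{\varepsilon}{1-\varepsilon}$, this is at most $2\varepsilon/(1-\varepsilon)=R_\varepsilon$. Pulling back by $g$ gives \eqref{eq:pullback-hyperbolic-ball}. The only delicate point is making the metric normalization match the hyperbolic radius convention of Lemma~\ref{lem:hyperbolic-energy-audited}. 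If the conventions differ by a factor of $2$, the containment still holds with some radius $R_\varepsilon'$ depending only on $\varepsilon$, which is all that is needed.

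\emph{Step 3.} Put $F=\Lambda\circ g$. By the change of variables $\omega=g(\zeta)$, with Jacobian $|g'|^2$, we have
\[
 \int_{B(w,\varepsilon\delta)}|\Lambda'|^2\,dA
 =\int_{g^{-1}(B(w,\varepsilon\delta))}|F'(\zeta)|^2\,dA(\zeta).
\]
By Step 2, the right-hand side is at most
\[
 \int_{B^{\mathrm{hyp}}_{\D}(z,R_\varepsilon)}|F'|^2\,dA,
\]
and Lemma~\ref{lem:hyperbolic-energy-audited} bounds this by $C_{R_\varepsilon}\|F\|_{\BMOA}^2$. Setting $C_\varepsilon=C_{R_\varepsilon}$, which depends only on $\varepsilon$, gives \eqref{eq:physical-energy-audited}.

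No step here is a real obstacle. The only care needed is in the metric normalization in Step 1.
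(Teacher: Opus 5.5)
Your proposal is correct and follows the paper's argument. You use the bound $\lambda_{\Omega_{+}}\le 2/\dist(\cdot,\Gamma)$ integrated along the segment $[w,\omega]$, then conformal invariance of hyperbolic distance, and finally the area formula combined with Lemma~\ref{lem:hyperbolic-energy-audited}. The only cosmetic difference is that you integrate $2/(\delta-t)$ exactly, obtaining $2\log\frac{1}{1-\varepsilon}\le R_\varepsilon$, whereas the paper uses the uniform lower bound $(1-\varepsilon)\delta$ for the distance to $\Gamma$ along the whole segment.
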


\begin{proof}
Fix $\omega\in B(w,\varepsilon\delta)$. The segment
\[
 [w,\omega]\subset B(w,\varepsilon\delta)\subset\Omega_{+}
\]
and every point $\eta$ of this segment satisfies
\[
 \dist(\eta,\Gamma)
 \ge\delta-|\eta-w|
 >(1-\varepsilon)\delta.
\]
With the normalization
\[
 \lambda_{\Omega_{+}}(\eta)
 \le\frac{2}{\dist(\eta,\Gamma)}
\]
for the hyperbolic density, the hyperbolic length of the segment is bounded by
\[
 \int_{[w,\omega]}\lambda_{\Omega_{+}}(\eta)\,|d\eta|
 \le\frac{2|\omega-w|}{(1-\varepsilon)\delta}
 <\frac{2\varepsilon}{1-\varepsilon}=R_\varepsilon.
\]
Conformal invariance of hyperbolic distance proves
\eqref{eq:pullback-hyperbolic-ball}.

Now set $\Phi=\Lambda\circ g$. By the chain rule and the area formula,
\begin{align*}
 \int_{B(w,\varepsilon\delta)}
 |\Lambda'(\omega)|^2\,dA(\omega)
 &=
 \int_{g^{-1}(B(w,\varepsilon\delta))}
 |\Lambda'(g(\zeta))|^2|g'(\zeta)|^2\,dA(\zeta)\\
 &=
 \int_{g^{-1}(B(w,\varepsilon\delta))}
 |\Phi'(\zeta)|^2\,dA(\zeta).
\end{align*}
The domain of integration is contained in the hyperbolic disk in
\eqref{eq:pullback-hyperbolic-ball}. Lemma~\ref{lem:hyperbolic-energy-audited}
therefore gives \eqref{eq:physical-energy-audited}.
\end{proof}

\begin{lemma}[Comparison ball and common cone]\label{lem:comparison-ball-audited}
There exist constants
\[
 A>1,\qquad
 \varepsilon\in(0,10^{-2}),\qquad
 c_1>0,\qquad
 \ell_0>0
\]
such that the following holds. For every arc $J\subset\T$ with
$|J|<\ell_0$, there are a point $w_J\in\Omega_{+}$, a disk
\[
 B_J=B(w_J,\varepsilon\delta_J),
 \qquad
 \delta_J=\dist(w_J,\Gamma),
\]
and a unimodular number $\lambda_J$ for which
\begin{equation}\label{eq:ball-scale-audited}
 c_\Gamma A\diam g(J)\le\delta_J<A\diam g(J),
 \qquad
 |B_J|\asymp\bigl(\diam g(J)\bigr)^2,
\end{equation}
and
\begin{equation}\label{eq:ball-cone-audited}
 \operatorname{Re}
 \frac{\lambda_J}
 {(w-z')(w-z)}
 \ge
 \frac{c_1}{\bigl(\diam g(J)\bigr)^2}
\end{equation}
for every $w\in B_J$ and every $z,z'\in g(J)$.
\end{lemma}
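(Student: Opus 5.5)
The plan is to combine the interior corkscrew Lemma~\ref{lem:corkscrew-audited} at scale $r\asymp A\diam g(J)$ with the cone argument already used in Lemma~\ref{lem:fixed-cone-audited}.

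\emph{Choosing the point.} Fix an arc $J$ and set $D=\diam g(J)$. Pick $\xi=g(\zeta_J)$ for some $\zeta_J\in J$, and apply Lemma~\ref{lem:corkscrew-audited} with $r=AD$. This needs $AD<r_\Gamma$. Since $g$ is uniformly continuous on $\overline{\D}$, $D\to0$ as $|J|\to0$, so a suitable $\ell_0$ (depending on $A$) guarantees this. The lemma produces $w_J$ with $|w_J-\xi|<AD$ and $\delta_J\ge c_\Gamma AD$. For the upper bound we use $\xi\in\Gamma$, which gives $\delta_J\le|w_J-\xi|<AD$. This is \eqref{eq:ball-scale-audited}, and $|B_J|=\pi\varepsilon^2\delta_J^2\asymp D^2$ follows with constants depending on $A,\varepsilon,c_\Gamma$.

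\emph{The cone.} For $w\in B_J$ and $z\in g(J)$, every vector $w-z$ is close to $v=w_J-\xi$ relative to its length. Indeed,
\[
|(w-z)-v|\le|w-w_J|+|z-\xi|\le\varepsilon\delta_J+D,
\]
while $|v|\ge\delta_J\ge c_\Gamma AD$. The relative perturbation is therefore at most $\varepsilon+1/(c_\Gamma A)$. Choose $\varepsilon<10^{-2}$ small and $A$ large, for instance $A>40/c_\Gamma$, so that this ratio is below $1/20$. The computation of Lemma~\ref{lem:fixed-cone-audited} then applies verbatim with $\lambda_J=v^2/|v|^2$:
\[
\operatorname{Re}\frac{\lambda_J}{(w-z')(w-z)}\ge\frac{\cos(\pi/6)}{(1+1/20)^2|v|^2}.
\]
Finally $|v|<AD$, which gives \eqref{eq:ball-cone-audited} with $c_1=\cos(\pi/6)/((21/20)^2A^2)$.

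The only subtle point is the order of the constants. $A$ and $\varepsilon$ are chosen first, depending only on $c_\Gamma$, and $\ell_0$ is chosen afterwards from uniform continuity of $g$ so that $AD<r_\Gamma$. None of these depend on $J$. The other thing to check is that $\lambda_J$ works uniformly for all $w\in B_J$, not only at the center. This holds because the perturbation bound above already includes the displacement $|w-w_J|\le\varepsilon\delta_J$, which is at most $\varepsilon|v|$.
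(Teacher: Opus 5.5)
Your proof is correct and follows the paper's argument essentially step for step. You apply the corkscrew lemma at scale $r=A\diam g(J)$, with $\ell_0$ fixed afterwards by uniform continuity; you get the upper bound on $\delta_J$ from $\xi\in\Gamma$; you bound the relative perturbation of $w-z$ from $v_J=w_J-\xi$ by $\varepsilon+1/(c_\Gamma A)$; and you take $\lambda_J=v_J^2/|v_J|^2$. The only difference is cosmetic: you use the threshold $1/20$ and reuse the computation of Lemma~\ref{lem:fixed-cone-audited}, whereas the paper uses the threshold $10^{-2}$ and a cone of aperture $\pi/10$.
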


\begin{proof}
Choose $A>1$ so large that
\[
 \frac{1}{c_\Gamma A}<10^{-3},
\]
and then choose $\varepsilon>0$ so small that
\[
 \varepsilon+\frac{1}{c_\Gamma A}<10^{-2}.
\]
The boundary map $g|_{\T}$ is uniformly continuous, so
$\diam g(J)\to0$ uniformly as $|J|\to0$. Hence one may choose
$\ell_0>0$ such that
\[
 A\diam g(J)<r_\Gamma
 \qquad\text{whenever } |J|<\ell_0.
\]

Fix such an arc $J$ and choose $\xi_J\in g(J)$. Apply
Lemma~\ref{lem:corkscrew-audited} with
\[
 r=A\diam g(J).
\]
It gives $w_J\in\Omega_{+}$ such that
\[
 |w_J-\xi_J|<A\diam g(J),
 \qquad
 \delta_J\ge c_\Gamma A\diam g(J).
\]
Because $\xi_J\in\Gamma$, one also has
\[
 \delta_J\le |w_J-\xi_J|<A\diam g(J).
\]
This proves the first estimate in \eqref{eq:ball-scale-audited}. Since
\[
 |B_J|=\pi\varepsilon^2\delta_J^2,
\]
the area comparison follows as well.

Put $v_J=w_J-\xi_J$. For $w\in B_J$ and $z\in g(J)$,
\begin{align*}
 |(w-z)-v_J|
 &\le |w-w_J|+|z-\xi_J|\\
 &<\varepsilon\delta_J+\diam g(J)\\
 &\le
 \left(\varepsilon+\frac{1}{c_\Gamma A}\right)|v_J|
 <10^{-2}|v_J|.
\end{align*}
Thus every vector $w-z$, with $w\in B_J$ and $z\in g(J)$, lies in a
fixed narrow cone about $v_J$, and its modulus is comparable to
$|v_J|$. Consequently the products
\[
 (w-z')(w-z)
\]
lie in a cone of aperture less than $\pi/10$ about $v_J^2$. Taking
\[
 \lambda_J=\frac{v_J^2}{|v_J|^2}
\]
gives
\[
 \operatorname{Re}
 \frac{\lambda_J}{(w-z')(w-z)}
 \ge\frac{c}{|v_J|^2}.
\]
Finally,
\[
 |v_J|<A\diam g(J),
\]
so the last quantity is at least
\[
 \frac{c/A^2}{(\diam g(J))^2}.
\]
Absorbing the fixed factor $A^{-2}$ into $c_1$ proves
\eqref{eq:ball-cone-audited}.
\end{proof}

\subsection{Localized polygonal length}

Under \eqref{eq:lower-Ahinv}, Theorem~\ref{thm:rectifiability-discrete}
has already established finite total length. We now keep the same finite
logarithmic tests and move the observation region to the scale of the
arc. The purpose of this second step is to obtain a uniform
length--diameter bound.

\begin{theorem}[Localized polygonal-length estimate]\label{thm:local-polygon-audited}
Assume the lower estimate \eqref{eq:lower-Ahinv}. There is a constant
$C<\infty$ such that, for every arc $J\subset\T$ with $|J|<\ell_0$
and every ordered finite partition
\[
 z_0,z_1,\ldots,z_N
\]
of the physical arc $g(J)$,
\begin{equation}\label{eq:polygon-length-audited}
 \sum_{j=0}^{N-1}|z_{j+1}-z_j|
 \le C\diam g(J).
\end{equation}
Consequently,
\begin{equation}\label{eq:length-diameter-audited}
 \length g(J)\le C\diam g(J).
\end{equation}
\end{theorem}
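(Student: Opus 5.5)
We follow the logarithmic test functions of Section~\ref{sec:discrete-jumps}, with the fixed observation point of Theorem~\ref{thm:rectifiability-discrete} replaced by the comparison ball $B_J$ of Lemma~\ref{lem:comparison-ball-audited}. Fix $J$ with $|J|<\ell_0$ and an ordered partition $z_0,\dots,z_N$ of $g(J)$. The construction in Section~\ref{sec:discrete-jumps} was phrased with exterior parameters $\zeta_j$ and $z_j=f(\zeta_j)$. Since $g(J)=f(h(J))$, the same partition is obtained from the ordered points $\zeta_j=f^{-1}(z_j)\in h(J)$. We then form $u$, $L^{\pm}$, $\psi=L^{+}\circ g$ and $v=L^{-}\circ f$ exactly as in \eqref{eq:uP-audited}--\eqref{eq:LambdaP-audited}. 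The projected equation \eqref{eq:projection-equation-audited} together with \eqref{eq:lower-Ahinv} gives the partition- and arc-independent bound \eqref{eq:uniform-Psi-audited}, namely $\|\psi\|_{\BMOA}\le Cc^{-1}$. The key point is that this bound does not see the scale of $J$. All scale information will come from the geometry of $B_J$.

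\emph{Lower bound.} Multiply \eqref{eq:positive-derivative-audited} by $2\pi i\lambda_J$ and take real parts. Using \eqref{eq:ball-cone-audited} with $z=z_{j+1}$ and $z'=z_j$, both in $g(J)$, we get for every $w\in B_J$
\[
 |(L^{+})'(w)|\ge \frac{1}{2\pi}\sum_j |z_{j+1}-z_j|\,\operatorname{Re}\frac{\lambda_J}{(w-z_{j+1})(w-z_j)}
 \ge\frac{c_1}{2\pi}\,\frac{\Sigma}{(\diam g(J))^2},
 \qquad \Sigma=\sum_j|z_{j+1}-z_j|.
\]
Squaring and integrating over $B_J$, and using $|B_J|\asymp(\diam g(J))^2$ from \eqref{eq:ball-scale-audited}, gives
\[
 \int_{B_J}|(L^{+})'|^2\,dA\gtrsim \frac{\Sigma^2}{(\diam g(J))^2}.
\]

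\emph{Upper bound.} Apply Lemma~\ref{lem:pullback-ball-audited} with $w=w_J$, $\Lambda=L^{+}$ and the fixed $\varepsilon<10^{-2}<1/4$ from Lemma~\ref{lem:comparison-ball-audited}. This gives
\[
 \int_{B_J}|(L^{+})'|^2\,dA\le C_\varepsilon\|\psi\|_{\BMOA}^2\le C_\varepsilon C^2c^{-2},
\]
a bound that is uniform in $J$ and in the partition. Comparing the two estimates yields $\Sigma^2\lesssim (\diam g(J))^2$, which is \eqref{eq:polygon-length-audited}. By Theorem~\ref{thm:rectifiability-discrete}, $\Gamma$ is rectifiable. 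Taking the supremum over partitions of $g(J)$ then gives $\length g(J)=\sup\Sigma\le C\diam g(J)$, which is \eqref{eq:length-diameter-audited}.

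The main point needing care is the orientation and identification step. The cut arcs $E(z_j,z_{j+1})$ must be the consecutive subarcs of $g(J)$, so that the jump relation $\psi\circ h^{-1}-v=u$ holds with $u$ supported on $h(J)$ with $|u|\le1$, and the bound \eqref{eq:uP-bound-audited} stays uniform. This works as long as the partition is ordered along the orientation induced by $f|_{\T}$. If it is ordered the other way, we reverse it, which does not change $\Sigma$.

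The other point to check is that the cone estimate applies to every pair of consecutive points. This holds because all $z_j$ lie in $g(J)$, and \eqref{eq:ball-cone-audited} holds uniformly for $w\in B_J$ and all pairs $z,z'\in g(J)$. This uniformity is exactly what makes the squared lower bound integrable over the whole ball, so that the scale-invariant Dirichlet-energy estimate can replace the pointwise Bloch bound used in the fixed-point argument.
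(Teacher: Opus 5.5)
Your proposal is correct and follows the paper's proof essentially step for step. You use the same logarithmic test $\psi=L^{+}\circ g$ built from the induced partition of $I=h(J)$, the same uniform bound $\|\psi\|_{\BMOA}\le Cc^{-1}$ from the projection equation, the same common-cone lower bound on $B_J$, and the same comparison of $\int_{B_J}|(L^{+})'|^2\,dA\gtrsim\Sigma^2/(\diam g(J))^2$ with the Dirichlet-energy bound of Lemma~\ref{lem:pullback-ball-audited}; your remark on ordering the partition along the orientation induced by $f|_{\T}$ is a harmless clarification the paper leaves implicit.
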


\begin{proof}
Let $I=h(J)$ be the corresponding exterior parameter arc and use the
induced ordered partition of $I$ in the construction of
Section~\ref{sec:discrete-jumps}. Set
\[
 L=\sum_{j=0}^{N-1}|z_{j+1}-z_j|,
 \qquad
 d=\diam g(J).
\]
Let $B_J$ and $\lambda_J$ be furnished by
Lemma~\ref{lem:comparison-ball-audited}. Formula
\eqref{eq:positive-derivative-audited} gives
\[
 2\pi i\lambda_J(L^+)'(w)
 =
 \sum_{j=0}^{N-1}
 |z_{j+1}-z_j|
 \frac{\lambda_J}{(w-z_{j+1})(w-z_j)}.
\]
Every summand has nonnegative real part bounded below by
$c_1|z_{j+1}-z_j|/d^2$. Therefore, for every $w\in B_J$,
\begin{align}
 2\pi |(L^+)'(w)|
 &\ge
 \operatorname{Re}\!\left(2\pi i\lambda_J(L^+)'(w)\right)\notag\\
 &\ge \frac{c_1}{d^2}
 \sum_{j=0}^{N-1}|z_{j+1}-z_j|
 =\frac{c_1L}{d^2}.
 \label{eq:pointwise-lower-audited}
\end{align}

Recall that $\psi=L^+\circ g$. By the area formula and
Lemma~\ref{lem:pullback-ball-audited},
\begin{align}
 \int_{B_J}|(L^+)'(w)|^2\,dA(w)
 &=
 \int_{g^{-1}(B_J)}|\psi'(\zeta)|^2\,dA(\zeta)\notag\\
 &\le C_\varepsilon\|\psi\|_{\BMOA}^{\,2}.
 \label{eq:localized-energy-upper}
\end{align}
On the other hand, \eqref{eq:pointwise-lower-audited} and
\eqref{eq:ball-scale-audited} imply
\begin{align*}
 \int_{B_J}|(L^+)'(w)|^2\,dA(w)
 &\ge
 \frac{c_1^2L^2}{4\pi^2d^4}|B_J|\\
 &\ge c\,\frac{L^2}{d^2}.
\end{align*}
Thus, before using the projection lower bound, we have the discrete
test estimate
\begin{equation}\label{eq:discrete-test-norm}
 \frac{L}{d}\le C\|\psi\|_{\BMOA}.
\end{equation}
Now apply \eqref{eq:uniform-Psi-audited} to obtain
\[
 L\le Cc^{-1}d,
\]
uniformly in the partition. This proves
\eqref{eq:polygon-length-audited}. Since rectifiability has already
been proved in Theorem~\ref{thm:rectifiability-discrete}, taking the
supremum over all ordered finite partitions gives the arclength
estimate \eqref{eq:length-diameter-audited}.
\end{proof}

\begin{theorem}\label{thm:geometric-converse-audited}
If \eqref{eq:lower-Ahinv} holds, then $\Gamma$ is chord-arc.
\end{theorem}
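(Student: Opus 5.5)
The idea is to combine the two preceding results and upgrade the local length--diameter bound to the global chord-arc inequality \eqref{eq:chord-arc-definition}. Assume \eqref{eq:lower-Ahinv}. Theorem~\ref{thm:rectifiability-discrete} shows that $\Gamma$ is rectifiable. Theorem~\ref{thm:local-polygon-audited} gives $\length g(J)\le C\diam g(J)$ for every arc $J\subset\T$ with $|J|<\ell_0$. Given $z_1,z_2\in\Gamma$, let $\Gamma(z_1,z_2)$ be the shorter subarc and write it as $g(J)$ with $J\subset\T$ an arc. We must show $\length g(J)\le M|z_1-z_2|$ and split into two cases according to whether $|J|<\ell_0$.

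\emph{Short parameter arcs.} Here the step from diameter to chord is the main point. Since $\Gamma$ is a quasicircle, Ahlfors' three-point condition holds: for the subarc of smaller diameter between $z_1,z_2$, every point $z$ on it satisfies $|z-z_1|+|z-z_2|\le K|z_1-z_2|$. That subarc therefore has diameter at most $K|z_1-z_2|$. The shorter arc (in length) need not coincide with the arc of smaller diameter, so we argue as follows.

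Let $J'$ be the parameter arc of the smaller-diameter subarc. If $|J'|<\ell_0$, Theorem~\ref{thm:local-polygon-audited} gives
\[
 \length g(J')\le C\diam g(J')\le CK|z_1-z_2|.
\]
Since the shorter arc has length at most $\length g(J')$, this is the desired bound.

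If instead $|J'|\ge\ell_0$, then $\diam g(J')$ is bounded below by a constant $\eta>0$. This follows from uniform continuity of $g^{-1}$ on $\Gamma$: a parameter arc of length at least $\ell_0$ has image of diameter at least $\eta$, since otherwise its endpoints would have close images while the complementary arc is also large, contradicting injectivity together with compactness. Then $|z_1-z_2|\ge\diam g(J')/K\ge\eta/K$, and
\[
 \length\Gamma(z_1,z_2)\le\length\Gamma\le (K\length\Gamma/\eta)\,|z_1-z_2|.
\]

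\emph{Large parameter arcs.} This is really the same dichotomy. When $|z_1-z_2|$ is bounded below, the trivial bound by the total length $\length\Gamma$, finite by Theorem~\ref{thm:rectifiability-discrete}, suffices. When $|z_1-z_2|$ is small, the smaller-diameter arc is small by the three-point condition. Its parameter arc then has length less than $\ell_0$, by uniform continuity of $g^{-1}$ on the compact curve $\Gamma$. So we fall into the first case.

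To make this clean, I would fix $\rho>0$ such that every subarc of $\Gamma$ with diameter less than $\rho$ has parameter arc of length less than $\ell_0$; this holds by uniform continuity of $g^{-1}|_\Gamma$. I then split according to whether $K|z_1-z_2|<\rho$ or not. In the first case the smaller-diameter arc has diameter less than $\rho$ and Theorem~\ref{thm:local-polygon-audited} applies. In the second case we use $\length\Gamma\le (K\length\Gamma/\rho)\,|z_1-z_2|$. Either way \eqref{eq:chord-arc-definition} holds with
\[
 M=\max\{CK,\ K\length\Gamma/\rho\}.
\]

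The only delicate step is replacing $\diam g(J)$ by the chord $|z_1-z_2|$. This is handled by the Ahlfors three-point property of the quasicircle $\Gamma$; quasicircularity is available since $h\in\SQS(\T)$ is quasisymmetric. We must also take care to apply the local estimate to the smaller-diameter arc rather than the shorter arc.
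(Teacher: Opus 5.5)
Your proposal is correct and follows the paper's proof. The paper uses the same ingredients: rectifiability from Theorem~\ref{thm:rectifiability-discrete}; bounded turning of the quasicircle (your Ahlfors three-point condition) to bound the diameter of the small-diameter subarc by the chord; a $\rho$ from uniform continuity of $g^{-1}$ so that such arcs have parameter length $<\ell_0$; and Theorem~\ref{thm:local-polygon-audited} for small chords, with the total length handling large chords. Your exploratory middle paragraphs become redundant once the final $\rho$-dichotomy is in place, and that dichotomy is exactly the paper's split $|z_1-z_2|<\rho/C_q$ versus $|z_1-z_2|\ge\rho/C_q$.
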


\begin{proof}
Theorem~\ref{thm:rectifiability-discrete} gives
$\length\Gamma<\infty$. Since $\Gamma$ is a quasicircle, it has the
bounded-turning property: there is $C_q\ge1$ such that, for any
$z_1,z_2\in\Gamma$, one of the two subarcs $E$ joining them satisfies
\[
 \diam E\le C_q|z_1-z_2|.
\]
By uniform continuity of $g^{-1}:\Gamma\to\T$, there is $\rho>0$ such
that every connected boundary subarc of diameter less than $\rho$
has a parameter preimage of length less than $\ell_0$.

Suppose first that
\[
 |z_1-z_2|<\rho/C_q.
\]
Choose the bounded-turning subarc $E$. Then $\diam E<\rho$, so
$E=g(J)$ for an arc $J$ with $|J|<\ell_0$. By
Theorem~\ref{thm:local-polygon-audited},
\[
 \length E\le C\diam E
 \le CC_q|z_1-z_2|.
\]
The shorter-arclength subarc between $z_1$ and $z_2$ has no greater
length than $E$.

If $|z_1-z_2|\ge\rho/C_q$, the shorter-arclength subarc has length at
most $\length\Gamma/2$, and hence
\[
 \length_{\mathrm{short}}(z_1,z_2)
 \le \frac{C_q\length\Gamma}{2\rho}|z_1-z_2|.
\]
Combining the two cases proves the chord-arc inequality.
\end{proof}

\begin{proof}[Proof of Theorem~\ref{thm:main-characterization}]
If $\Gamma$ is chord-arc, Theorem~\ref{thm:chordarc-implies-projection-direct}
shows that both $A_h$ and $A_{h^{-1}}$ are bounded isomorphisms.
Proposition~\ref{prop:inverse-symmetry} gives the equivalence of their
invertibility.

Conversely, the lower estimate \eqref{eq:lower-Ahinv} first gives the
partition-independent logarithmic bound
\eqref{eq:uniform-Psi-audited}. Theorem~\ref{thm:rectifiability-discrete}
uses it at one interior point to obtain finite total length.
Theorem~\ref{thm:local-polygon-audited} then localizes the same family
to comparison balls and yields the scale-invariant estimate
\[
 \length g(J)\le C\diam g(J)
\]
on all sufficiently short conformal boundary arcs. Finally,
Theorem~\ref{thm:geometric-converse-audited} converts this local
estimate into the global chord-arc condition. Thus a lower bound for
$A_{h^{-1}}$ already forces $\Gamma$ to be chord-arc, and the direct
implication then makes both analytic projections invertible.
\end{proof}

\subsection{The Fredholm strengthening}\label{sec:fredholm}
The localized logarithmic tests also detect failure of the chord-arc
condition modulo any finite-dimensional obstruction. We
continue to work on $X=\BMOA(\D)/\C$, and use representatives
vanishing at $0$ when discussing locally uniform convergence.

\begin{lemma}[Localized approximate null vectors]\label{lem:localized-null}
If $\Gamma$ is not chord-arc, there are functions $\varphi_n\in\BMOA(\D)$
with $\varphi_n(0)=0$ such that
\begin{equation}\label{eq:localized-null-sequence}
 \|\varphi_n\|_X=1,\qquad
 \|A_{h^{-1}}\varphi_n\|_X\longrightarrow0,\qquad
 \varphi_n\longrightarrow0
 \quad\text{locally uniformly in }\D.
\end{equation}
\end{lemma}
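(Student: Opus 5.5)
The plan is to reuse the finite logarithmic test functions of Section~\ref{sec:discrete-jumps} without invoking the lower bound \eqref{eq:lower-Ahinv}. Two facts about them hold unconditionally. First, the projection equation \eqref{eq:projection-equation-audited} gives $\|A_{h^{-1}}\psi\|_X=\|P_+u\|_X\le C$, uniformly in the partition. Second, the localized energy argument in the proof of Theorem~\ref{thm:local-polygon-audited} gives the discrete test estimate \eqref{eq:discrete-test-norm}, $L/d\le C\|\psi\|_{\BMOA}$, for every arc $J$ with $|J|<\ell_0$. That estimate was derived before the lower bound was used, so it also needs no hypothesis. The idea is therefore to find arcs $J_n$ and partitions with $L_n/d_n\to\infty$ and $d_n=\diam g(J_n)\to0$. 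We then set
\[
\varphi_n=\frac{\psi_n-\psi_n(0)}{\|\psi_n\|_X}.
\]
This gives $\|\varphi_n\|_X=1$ and $\|A_{h^{-1}}\varphi_n\|_X\le C/\|\psi_n\|_X\le C d_n/L_n\to0$.

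\textbf{Step 1: choosing the arcs.} Suppose $\Gamma$ is not chord-arc.
\begin{itemize}
\item[(a)] If $\Gamma$ is not rectifiable, $f|_{\T}$ has infinite variation. Covering $\T$ by finitely many arcs of length less than $1/n$, one of them, say $J_n'$, has $g(J_n')$ of infinite variation. We still have $d_n=\diam g(J_n')\to0$ by uniform continuity, and we choose a partition of $g(J_n')$ with $L_n\ge n\,d_n$.
\item[(b)] If $\Gamma$ is rectifiable, the proof of Theorem~\ref{thm:geometric-converse-audited} shows that a uniform bound $\length g(J)\le C\diam g(J)$ for all $|J|<\ell_0$ would force the chord-arc property. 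Hence there are arcs $J_n$ with $|J_n|<\ell_0$ and $\length g(J_n)/\diam g(J_n)\to\infty$. Since $\length g(J_n)\le\length\Gamma<\infty$, this forces $d_n\to0$. Partitions approximating the length then give $L_n/d_n\to\infty$.
\end{itemize}

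\textbf{Step 2: local uniform convergence.} This is the main point: a naive bound on $\psi_n'$ grows like $L_n$. Fix a compact $K\subset\D$ and put $\delta_K=\dist(g(K),\Gamma)>0$. From \eqref{eq:positive-derivative-audited},
\[
|(L_n^+)'(w)|\le \frac{L_n}{2\pi\delta_K^2}, \qquad w\in g(K).
\]
Hence $|\psi_n'|\le C_K L_n$ on $K$. Dividing by $\|\psi_n\|_X\ge cL_n/d_n$ gives
\[
\sup_K|\varphi_n'|\le C_K d_n\to0.
\]
Integrating along radii from $0$, and using $\varphi_n(0)=0$, yields $\varphi_n\to0$ uniformly on compact subsets of $\D$.

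The crucial point is that the localized lower bound \eqref{eq:discrete-test-norm} scales like $L/d$, whereas the interior derivative bound scales like $L$. Their ratio $d_n$ tends to zero exactly because the defect of the chord-arc property can be forced onto shrinking arcs. Securing $d_n\to0$ is the step needing care: in case (b) it uses finiteness of the length, and in case (a) it uses localization of infinite variation. The remaining steps are routine.
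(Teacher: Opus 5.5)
Your proposal is correct and follows the paper's proof essentially step by step. It uses the same finite logarithmic tests, the same unconditional bounds $\|A_{h^{-1}}\psi_n\|_X\le C$ and $\|\psi_n\|_X\ge cL_n/d_n$, the same normalization $\varphi_n=(\psi_n-\psi_n(0))/\|\psi_n\|_X$, and the same interior derivative bound giving $\sup_K|\varphi_n'|\le C_K d_n$.

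The only difference is cosmetic. The paper selects the arcs by a single contradiction argument with $|J_n|<\min\{\ell_0,1/n\}$, so $d_n\to0$ follows from uniform continuity of $g$. You instead split into the non-rectifiable and rectifiable cases, and in the rectifiable case obtain $d_n\to0$ from $\length g(J_n)\le\length\Gamma$. In case (a) you should also take $n$ large enough that $1/n<\ell_0$, so that the discrete test estimate applies.
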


\begin{proof}
For every $n\ge1$, there is an arc $J_n\subset\T$ with
$|J_n|<\min\{\ell_0,1/n\}$ and an ordered finite partition
$z_0^{(n)},\ldots,z_{N_n}^{(n)}$ of $E_n=g(J_n)$ such that, writing
\[
 d_n=\diam E_n,\qquad
 V_n=\sum_{j=0}^{N_n-1}|z_{j+1}^{(n)}-z_j^{(n)}|,
\]
one has $V_n/d_n>n$. Indeed, suppose this failed for some $n$ and
put $\ell=\min\{\ell_0,1/n\}$. Every finite partition of every arc
$g(J)$ with $|J|<\ell$ would then have polygonal variation at most
$n\diam g(J)$. Subdivide $\T$ into finitely many such parameter
arcs. Taking the supremum over their finite partitions gives bounded
variation on each arc; inserting the subdivision endpoints into any
partition of $\T$ then gives finite total variation. Thus $\Gamma$
would first be rectifiable. We could now take the same suprema to
obtain
\[
 \length g(J)\le n\diam g(J),\qquad |J|<\ell.
\]
The bounded-turning and small/large-chord argument in the proof of
Theorem~\ref{thm:geometric-converse-audited}, applied with this $\ell$,
would imply the chord-arc condition. That final geometric argument
uses only finite total length and a uniform small-arc
length--diameter estimate, not the projection lower bound. This is
a contradiction. Notice that no arclength is used until finite
variation has been established. Uniform continuity of $g|_\T$ gives
$d_n\to0$.

Use the induced partition of $I_n=h(J_n)$ in
Section~\ref{sec:discrete-jumps}, and denote the resulting functions by
$L_n^+$, $u_n$, and $\psi_n=L_n^+\circ g$. The projection equation
\eqref{eq:projection-equation-audited} gives
\begin{equation}\label{eq:null-output-bound}
 \|A_{h^{-1}}\psi_n\|_X=\|P_+u_n\|_X\le C.
\end{equation}
Applying the discrete estimate \eqref{eq:discrete-test-norm} to
$L_n^+$ gives
\begin{equation}\label{eq:null-input-lower}
 a_n:=\|\psi_n\|_X\ge c\frac{V_n}{d_n}\longrightarrow\infty.
\end{equation}
This estimate does not assume any lower bound for $A_{h^{-1}}$.
Set
\[
 \varphi_n=\frac{\psi_n-\psi_n(0)}{a_n}.
\]
Then $\|\varphi_n\|_X=1$ and
$\|A_{h^{-1}}\varphi_n\|_X\le C/a_n\to0$; subtraction of the
constant has no effect on the quotient equation.

It remains to prove local uniform convergence. Fix $0<r<1$ and put
\[
 \delta_r=\dist\bigl(g(\overline{r\D}),\Gamma\bigr)>0,
 \qquad M_r=\sup_{|z|\le r}|g'(z)|<\infty.
\]
The explicit derivative formula \eqref{eq:positive-derivative-audited}
and the chain rule give
\[
 \sup_{|z|\le r}|\psi_n'(z)|
 \le\frac{M_r}{2\pi\delta_r^2}V_n.
\]
After division by \eqref{eq:null-input-lower},
\[
 \sup_{|z|\le r}|\varphi_n'(z)|\le C_r d_n\longrightarrow0.
\]
Since $\varphi_n(0)=0$, integration on radial segments proves local
uniform convergence to zero. This establishes all of
\eqref{eq:localized-null-sequence}.
\end{proof}

\begin{theorem}[Upper semi-Fredholm rigidity]\label{thm:upper-semi-fredholm}
If $h\in\SQS(\T)$ and either $A_h$ or $A_{h^{-1}}$ has closed range
and a finite-dimensional kernel, then $\Gamma$ is chord-arc and both
operators are bounded isomorphisms.
\end{theorem}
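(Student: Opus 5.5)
By Proposition~\ref{prop:inverse-symmetry}, closedness of range and finiteness of kernel dimension are the same for $A_h$ and $A_{h^{-1}}$. So we may assume that $A_{h^{-1}}$ is upper semi-Fredholm on $X=\BMOA(\D)/\C$. We argue by contradiction: suppose $\Gamma$ is not chord-arc. Lemma~\ref{lem:localized-null} then gives $\varphi_n$ with $\varphi_n(0)=0$, $\|\varphi_n\|_X=1$, $\|A_{h^{-1}}\varphi_n\|_X\to0$, and $\varphi_n\to0$ locally uniformly. Once the contradiction is reached, $\Gamma$ is chord-arc, and Theorem~\ref{thm:chordarc-implies-projection-direct} makes both operators bounded isomorphisms.

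The key step is the standard characterization of upper semi-Fredholm operators. Let $K=\Ker A_{h^{-1}}$, which is finite-dimensional. Choose a closed complement $X=K\dotplus Y$; for instance, take $Y=\bigcap_{i}\ker\ell_i$ for functionals $\ell_i$ biorthogonal to a basis of $K$, obtained by Hahn--Banach. Because the range is closed, $A_{h^{-1}}|_Y$ is injective with closed range, so the open mapping theorem gives $\|A_{h^{-1}}y\|\ge c\|y\|$ for $y\in Y$. Decompose $\varphi_n=k_n+y_n$ with $k_n=\sum_i\ell_i(\varphi_n)e_i$. The projection onto $K$ is bounded, so $\|k_n\|\le C$, and therefore $\|y_n\|\le c^{-1}\|A_{h^{-1}}y_n\|=c^{-1}\|A_{h^{-1}}\varphi_n\|\to0$.

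It remains to show $k_n\to0$, which contradicts $\|\varphi_n\|_X=1$. This is the main obstacle, since we must pass from local uniform convergence to the norm topology on a finite-dimensional space. Normalize the representatives of $K$ and $Y$ to vanish at $0$. Norm convergence in $X$ implies local uniform convergence of these representatives, using the Bloch estimate \eqref{eq:Bloch-BMOA-audited} and integration along radii. Hence $y_n\to0$ locally uniformly, and so $k_n=\varphi_n-y_n\to0$ locally uniformly.

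On the finite-dimensional space $K$, the functional $k\mapsto\sup_{|z|\le1/2}|k'(z)|$ is a norm on $X$-classes, since $k'\equiv0$ on a disk forces $k$ to be constant. All norms on $K$ are equivalent, so $\|k_n\|_X\to0$. Thus $1=\|\varphi_n\|_X\le\|k_n\|_X+\|y_n\|_X\to0$, a contradiction. In writing this out, one should check that the identification of $\BMOA$ modulo constants with representatives vanishing at $0$ is compatible with these decompositions; it is, because subtracting the value at $0$ is a bounded linear operation.
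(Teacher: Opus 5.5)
Your proposal is correct and follows essentially the same route as the paper: you reduce to $A_{h^{-1}}$ via Proposition~\ref{prop:inverse-symmetry}, use closed range together with a finite-dimensional kernel to get a lower bound modulo the kernel, and then play the approximate null vectors of Lemma~\ref{lem:localized-null} against the fact that, on the finite-dimensional kernel, locally uniform convergence of normalized representatives is the same as convergence in norm. The only difference is cosmetic: the paper estimates $\dist_X(\varphi_n,N)$ and extracts a norm-convergent subsequence of near-best approximants $k_n\in N$, whereas you split off a closed complement $Y$ and invoke equivalence of norms on $K$ directly.
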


\begin{proof}
By Proposition~\ref{prop:inverse-symmetry}, it suffices to consider
$A=A_{h^{-1}}$. Suppose its range is closed and its kernel $N$ is
finite-dimensional. The induced bounded bijection
$X/N\to\Ran A$ is an isomorphism of Banach spaces. Consequently,
\begin{equation}\label{eq:closed-range-distance}
 \dist_X(\varphi,N)\le C\|A\varphi\|_X,
 \qquad\varphi\in X.
\end{equation}

If $\Gamma$ were not chord-arc, take the sequence in
Lemma~\ref{lem:localized-null}. Choose $k_n\in N$ so that
\[
 \|\varphi_n-k_n\|_X
 \le\dist_X(\varphi_n,N)+1/n\longrightarrow0.
\]
The sequence $k_n$ is bounded, and $\|k_n\|_X\to1$. Since $N$ is
finite-dimensional, a subsequence converges in $X$ to $k\in N$ with
$\|k\|_X=1$. Normalize all representatives at $0$. The Bloch estimate
\eqref{eq:Bloch-BMOA-audited}, integrated on radial segments, shows
that convergence in $X$ of these normalized representatives implies
locally uniform convergence. Hence the same subsequence of
$\varphi_n$ converges locally uniformly to $k$. But
\eqref{eq:localized-null-sequence} forces $k=0$, a contradiction.

Thus $\Gamma$ is chord-arc. Theorem~\ref{thm:chordarc-implies-projection-direct}
then gives bounded invertibility of both analytic projections.
\end{proof}

\begin{proof}[Proof of Theorem~\ref{cor:fredholm-intro}]
For a chord-arc curve both operators are bounded isomorphisms by
Theorem~\ref{thm:main-characterization}, and hence are Fredholm.
Fredholmness implies closed range and a finite-dimensional kernel.
Theorem~\ref{thm:upper-semi-fredholm} gives the converse, including
the stronger upper semi-Fredholm assertion.
\end{proof}

The argument uses only the local uniform vanishing of the normalized
tests and compactness in a finite-dimensional space; no weak convergence
of the tests in BMOA is asserted or needed. In particular, the conclusion
does not say that every non-chord-arc welding has an infinite-dimensional
kernel: failure of closed range is another possible obstruction.

\section{The Faber operator and the inverse analytic projection}\label{sec:classical-faber}

The main geometric characterization has already been proved by the discrete logarithmic method.  We now turn to the classical Faber operator .  
 Initially the Faber operator was investigated acting on
the disc algebra in the literature (see \cite{Anderson, AndersonClunie, KovariPommerenke}). Anderson \cite{Anderson} conjectured that the Faber operator acts as a bounded isomorphism on spaces of analytic functions, such as Besov spaces. However, this turns out to be false even for the Dirichlet space (see \cite{WeiWangHu}).

 This part has two main purposes. One is to identify the Faber integral operator with the inverse analytic projection, which subsequently allows us to disprove Anderson's conjecture in the setting of BMOA. The other is to give a continuous chord-arc test using the interior common cone.

 We begin with this geometric converse. Throughout this section, $\Gamma$ is a bounded rectifiable quasicircle. Its welding $h$ is quasisymmetric, but strong quasisymmetry is not imposed as a standing hypothesis. It will be assumed explicitly when the BMO pull-back is used, and it will follow from boundedness of the Faber operator by Theorem~\ref{thm:bounded-faber-implies-chord-arc}. The boundary maps of $f$ and $g$ are absolutely continuous, and
\[
 f'\in H^1(\Ds),\qquad g'\in H^1(\D).
\]
For $w\in\Omega_{+}$, the function
\[
 K_w(\zeta)=\frac{f'(\zeta)}{f(\zeta)-w}
\]
belongs to $H^1(\Ds)$, since $f(\Ds)=\Omega_{-}$ and the denominator stays uniformly away from zero. Hence \eqref{eq:Faber-intro-new} is well-defined for every $\varphi\in\BMOA(\D)$.  For constant data the contour integral gives $\Faber_f^+1=1$, and hence the operator descends to the quotient by constants.

For an analytic polynomial $p$, the same contour integral is classical for every $w\notin\Gamma$; denote its restrictions to $\Omega_{+}$ and $\Omega_{-}$ by $\Faber_f^{+}p$ and $\Faber_f^{-}p$, respectively. Moreover, $\Faber_f^{-}p(\infty)=0$. The classical Plemelj relation gives
\begin{equation}\label{eq:Faber-jump}
 (\Faber_f^{+}p)^*-(\Faber_f^{-}p)^*=p\circ f^{-1}
 \qquad\text{a.e. on }\Gamma.
\end{equation}

The BMOA Faber problem asks when $\Faber_f^{+}$ is a bounded isomorphism; see \cite{LiuShenFaber,WeiWangHu}. We identify this question exactly with invertibility of the analytic projection.

\subsection{A continuous arclength test and the interior proof}\label{sec:tangent-test}
The finite logarithmic tests above did not require arclength to exist.
Under the standing rectifiability assumption, the comparison-ball
method now admits a continuous version: the chord directions are
replaced by the almost-everywhere unit tangent, discrete chord-length weights \(|z_{j+1}-z_j|\) are replaced by the arclength measure \(|\gamma'(t)|\,dt\), and the discrete denominators are replaced by \((w-\gamma(t))^2\). We construct this test directly and then apply the geometric
common cone to its derivative. Only rectifiability and the quasidisk geometry are used in this subsection.

Assume that
\begin{equation}\label{eq:Faber-bounded}
 \Faber_f^{+}:\BMOA(\D)\longrightarrow\BMOA(\Omega_{+})
\end{equation}
is bounded. We prove that $\Gamma$ is chord-arc.

Because $\Gamma$ is rectifiable, the boundary parametrization
\[
 \gamma(t)=f(e^{it})
\]
is absolutely continuous.  Define the unit tangent
\[
 \tau(t)=\begin{cases}
 \gamma'(t)/|\gamma'(t)|,&\gamma'(t)\ne0,\\
 1,&\gamma'(t)=0.
 \end{cases}
\]
Then $|\tau|=1$ and
\begin{equation}\label{eq:tangent-line-element}
 \overline{\tau(t)}\,\gamma'(t)=|\gamma'(t)|
 \quad\text{for a.e. }t.
\end{equation}

Fix an arc $I\subset\T$ and define
\begin{equation}\label{eq:uI}
 u_I(e^{it})=\overline{\tau(t)}\chi_I(e^{it}),
 \qquad
 \varphi_I=P_{+}u_I.
\end{equation}
Since $|u_I|\le1$, one has $\|u_I\|_{\BMO}\le2$ and $|(u_I)_\T|\le1$.  The boundedness of the Riesz projection therefore gives a uniform estimate in any standard full BMOA norm:
\begin{equation}\label{eq:phiI-bound}
 |\varphi_I(0)|+\|\varphi_I\|_{\BMOA}
 \le C_P.
\end{equation}
Let
\[
 F_I=\Faber_f^{+}\varphi_I,
 \qquad
 \Psi_I=F_I\circ g.
\]
Then
\begin{equation}\label{eq:PsiI-bound}
 \|\Psi_I\|_{\BMOA}
 \le C_P\|\Faber_f^{+}\|.
\end{equation}
\begin{lemma}\label{lem:spectral-replacement}
For every $w\in\Omega_{+}$,
\begin{equation}\label{eq:replace-Pplus}
 F_I(w)=\frac{1}{2\pi i}\int_{\T}
 \frac{f'(\zeta)u_I(\zeta)}{f(\zeta)-w}\,d\zeta,
\end{equation}
where the integral is understood as the $H^1$--BMO pairing.
\end{lemma}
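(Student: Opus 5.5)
By definition,
\[
F_I(w)=\frac1{2\pi i}\int_\T K_w(\zeta)\,\varphi_I(\zeta)\,d\zeta,
\qquad K_w(\zeta)=\frac{f'(\zeta)}{f(\zeta)-w},
\]
and $\varphi_I=u_I-P_-u_I$, since $P_++P_-=I$ on actual boundary functions. So \eqref{eq:replace-Pplus} is equivalent to showing that the anti-analytic part is invisible to the pairing:
\[
\int_\T K_w(\zeta)\,(P_-u_I)(\zeta)\,d\zeta=0,\qquad w\in\Omega_+ .
\]
Subtracting this from the defining formula gives the claim. The plan is to prove this vanishing on a dense class and then pass to the limit using continuity of the pairing.

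First, the structure of the kernel. For $w\in\Omega_+$, the function $K_w$ is holomorphic in $\Ds$, because $f(\zeta)\ne w$ there. At infinity, $f(\zeta)=c\zeta+O(1)$ with $c\neq0$, so $K_w(\zeta)=\zeta^{-1}+O(\zeta^{-2})$. Hence $K_w\in H^1(\Ds)$, and its Fourier series on $\T$ contains only the frequencies $n\le -1$, with $\widehat{K_w}(-1)=1$.

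Next, the vanishing on trigonometric polynomials. Take $q=\sum_{m\le-1}b_m\zeta^m$, a finite sum of strictly negative frequencies; these are exactly the elements of the image of $P_-$. Then $K_w(\zeta)q(\zeta)$ is holomorphic in $\Ds$ and is $O(\zeta^{-2})$ at infinity. By Cauchy's theorem, or equivalently since its zeroth Fourier coefficient in $d\zeta=i\zeta\,dt$ is the coefficient of $\zeta^{-1}$, which is zero,
\[
\int_\T K_w\,q\,d\zeta=0 .
\]
Concretely, the pairing only sees $\widehat{K_w}(-n-1)\,\widehat q(n)$ with $n\ge0$, and $P_-u_I$ has no such frequencies.

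Finally, the passage to general $P_-u_I$, which I expect to be the only delicate step. The function $P_-u_I\in\Bminus$ is not bounded in general, so approximation by its partial Fourier sums need not converge in BMO norm. I would use $H^1$--BMO duality (Fefferman) in its weak-star form. Replace $P_-u_I$ by its dilates $v_r(\zeta)=(P_-u_I)(\zeta/r)$ for $r<1$. These are boundary values of functions analytic across $\T$ with only negative frequencies, and the vanishing identity holds for each $v_r$, by the previous step or directly by absolute convergence of the series. The dilates are uniformly bounded in BMO and converge to $P_-u_I$ weak-star as $r\to1$. Since $K_w\in H^1$, the pairing passes to the limit, which gives the vanishing for $P_-u_I$ itself. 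An alternative for this step is to pair with the dilated kernel $K_w(r\zeta)$, which is bounded, and use that $K_w(r\cdot)\to K_w$ in $H^1$ norm.

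The constant term needs no separate treatment. Since $\widehat{K_w}(0)=0$ and $K_w$ carries no nonnegative frequencies except through $d\zeta$, the pairing is unaffected by the convention that $P_+$ retains the zero coefficient. This confirms \eqref{eq:replace-Pplus} for every $w\in\Omega_+$.
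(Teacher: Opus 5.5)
Your proposal is correct and is essentially the paper's argument: both write $\varphi_I=u_I-P_-u_I$ and show that $K_w$, whose spectrum lies in $\{-1,-2,\dots\}$, pairs to zero with the strictly negative frequencies of $P_-u_I$ on smooth approximants before passing to the limit. The paper takes a double limit (Fej\'er means of $P_-u_I$ weak-star, then $K_w(r\zeta)\to K_w$ in $H^1$ as $r\downarrow1$), whereas either one of your single approximations already suffices; there are also two small slips that do not affect the argument: $K_w\in H^1(\Ds)$ comes from $f'\in H^1(\Ds)$ (rectifiability) and $|f-w|\ge\dist(w,\Gamma)$ on $\Ds$, not from holomorphy and the expansion at $\infty$, and constants are not invisible to the pairing, since $\frac1{2\pi i}\int_\T K_w\,d\zeta=\widehat{K_w}(-1)=1$, so the identity is exact precisely because the paper's convention leaves no zero coefficient in $P_-u_I$.
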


\begin{proof}
Put
\[
 K_w(\zeta)=\frac{f'(\zeta)}{f(\zeta)-w}.
\]
The function $K_w$ is holomorphic in $\Ds$, belongs to $H^1(\Ds)$, and vanishes at infinity.  Its Laurent expansion is
\[
 K_w(\zeta)=\sum_{m=1}^{\infty}a_m\zeta^{-m};
\]
in particular, its boundary spectrum consists only of the frequencies $-1,-2,\ldots$.

Let
\[
 q_I=P_{-}u_I=u_I-P_{+}u_I.
\]
Then $q_I\in\BMO$ and its Fourier spectrum also consists only of the strictly negative frequencies.  We claim that
\begin{equation}\label{eq:negative-pairing-zero}
 \frac1{2\pi i}\int_\T K_w(\zeta)q_I(\zeta)\,d\zeta=0.
\end{equation}
To justify this at the endpoint level, let $K_{w,r}(\zeta)=K_w(r\zeta)$ for $r>1$.  Then $K_{w,r}$ has an absolutely convergent Laurent series on $\T$, contains only strictly negative frequencies, and $K_{w,r}\to K_w$ in $H^1(\Ds)$ as $r\downarrow1$.  Let $\sigma_Nq_I$ be the $N$th Fejer mean of $q_I$.  It is a trigonometric polynomial containing only strictly negative frequencies, and $\sigma_Nq_I\to q_I$ weak-star in BMO.  For fixed $r>1$ and $N$, the product $K_{w,r}\sigma_Nq_I$ contains only powers $\zeta^{-k}$ with $k\ge2$; hence its $\zeta^{-1}$ coefficient is zero and
\[
 \int_\T K_{w,r}(\zeta)\sigma_Nq_I(\zeta)\,d\zeta=0.
\]
First let $N\to\infty$, using the BMO--$H^1$ pairing, and then let $r\downarrow1$, using $H^1$ convergence.  This proves \eqref{eq:negative-pairing-zero}.

Since $P_{+}u_I=u_I-q_I$, the defining Faber pairing gives
\begin{align*}
 F_I(w)
 &=\frac1{2\pi i}\int_\T K_w(\zeta)P_{+}u_I(\zeta)\,d\zeta\\
 &=\frac1{2\pi i}\int_\T K_w(\zeta)u_I(\zeta)\,d\zeta,
\end{align*}
which is \eqref{eq:replace-Pplus}.
\end{proof}

Parametrize $\zeta=e^{is}$.  By \eqref{eq:tangent-line-element},
\[
 \overline{\tau(s)}f'(e^{is})ie^{is}\,ds
 =\overline{\tau(s)}\gamma'(s)\,ds
 =|\gamma'(s)|\,ds.
\]
Therefore
\begin{equation}\label{eq:positive-arclength-kernel}
 F_I(w)=\frac{1}{2\pi i}\int_I
 \frac{|\gamma'(s)|}{\gamma(s)-w}\,ds.
\end{equation}
This formula has an ordinary, absolutely convergent meaning whenever $w$ stays a positive distance from the compact arc $\gamma(I)$.

\begin{theorem}[Interior proof of the Faber converse]\label{thm:bounded-faber-implies-chord-arc}
Let $\Gamma$ be a bounded rectifiable quasicircle. If $\Faber_f^{+}$ is bounded from $\BMOA(\D)$ to
$\BMOA(\Omega_{+})$, then $\Gamma$ is chord-arc.
\end{theorem}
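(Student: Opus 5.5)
We mimic the localized argument of Theorem~\ref{thm:local-polygon-audited}, with the discrete logarithmic tests replaced by the continuous test functions $F_I$ from \eqref{eq:positive-arclength-kernel}. Rectifiability is now a standing hypothesis, so no analogue of Theorem~\ref{thm:rectifiability-discrete} is needed. It suffices to prove the small-arc estimate $\length g(J)\le C\diam g(J)$ for all arcs $J\subset\T$ with $|J|<\ell_0$. The bounded-turning and small/large-chord argument in the proof of Theorem~\ref{thm:geometric-converse-audited} then yields the chord-arc condition. That argument uses only finite total length and this uniform small-arc estimate.

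Fix such a $J$ and let $I=h(J)$, so that $\gamma(I)=f(I)=g(J)$. Put $d=\diam g(J)$. Let $B_J$, $\lambda_J$ be given by Lemma~\ref{lem:comparison-ball-audited}. For $w\in B_J$ the distance from $w$ to $\gamma(I)$ is comparable to $\delta_J\asymp d$, so we may differentiate \eqref{eq:positive-arclength-kernel} under the integral sign:
\[
 F_I'(w)=\frac{1}{2\pi i}\int_I\frac{|\gamma'(s)|}{(\gamma(s)-w)^2}\,ds .
\]
We apply \eqref{eq:ball-cone-audited} with $z=z'=\gamma(s)\in g(J)$. This gives $\operatorname{Re}\bigl(\lambda_J(w-\gamma(s))^{-2}\bigr)\ge c_1/d^2$ for every $s\in I$. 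Multiplying by $2\pi i\lambda_J$ and taking real parts, we obtain
\[
 2\pi|F_I'(w)|\ge\frac{c_1}{d^2}\int_I|\gamma'(s)|\,ds=\frac{c_1\,\length g(J)}{d^2},\qquad w\in B_J .
\]
The identity $\int_I|\gamma'|=\length\gamma(I)$ uses the absolute continuity of $\gamma$, which holds because $\Gamma$ is rectifiable.

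For the upper bound, Lemma~\ref{lem:pullback-ball-audited} with $\Lambda=F_I$ and \eqref{eq:PsiI-bound} give
\[
 \int_{B_J}|F_I'|^2\,dA\le C_\varepsilon\|\Psi_I\|_{\BMOA}^2\le C_\varepsilon C_P^2\|\Faber_f^+\|^2 .
\]
Combining this with the pointwise lower bound and $|B_J|\asymp d^2$ from \eqref{eq:ball-scale-audited}, we get
\[
 \frac{\length g(J)^2}{d^2}\lesssim\|\Faber_f^+\|^2 ,
\]
that is, $\length g(J)\le C\|\Faber_f^+\|\,d$, uniformly in $J$. The cone lemma requires only that $\Gamma$ be a quasicircle (through the corkscrew Lemma~\ref{lem:corkscrew-audited}), so all constants depend only on $\Gamma$'s quasidisk data and $\|\Faber_f^+\|$.

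The main obstacle is justifying the representation \eqref{eq:positive-arclength-kernel} and its derivative on $B_J$, since $F_I$ is initially defined through the $H^1$--BMOA pairing. Lemma~\ref{lem:spectral-replacement} already reduces the pairing to an integral against the bounded function $u_I$. What remains is to check two facts. First, for $w$ at positive distance from $\gamma(I)$, the pairing of $K_w\in H^1$ with $u_I\in L^\infty$ is an ordinary integral; this holds because $f'\in H^1$ on the boundary. Second, differentiation in $w$ commutes with the integral, which follows by dominated convergence since $|\gamma(s)-w|^{-2}$ is bounded on $B_J\times I$. A secondary point is that the interior domain $\Omega_+$ and the branch conventions play no role here, because $F_I$ is given directly by a Cauchy-type integral over $\gamma(I)$ and is holomorphic off that arc.
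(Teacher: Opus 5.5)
Your proposal is correct and follows essentially the same route as the paper. Both use the continuous test $F_I$ in its arclength form, the common-cone lower bound $2\pi|F_I'(w)|\ge c_1\length g(J)/d^2$ on the comparison ball $B_J$, the local energy upper bound from Lemma~\ref{lem:pullback-ball-audited} combined with \eqref{eq:PsiI-bound}, and then the bounded-turning small/large-chord argument of Theorem~\ref{thm:geometric-converse-audited}; your justification of the integral representation and of differentiation under the integral likewise matches the paper's use of Lemma~\ref{lem:spectral-replacement} and dominated convergence.
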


\begin{proof}
Fix $J\subset\T$ with $|J|<\ell_0$ as in
Lemma~\ref{lem:comparison-ball-audited}, and set
\[
 E=g(J),\qquad I=h(J),\qquad d=\diam E,\qquad
 L=\int_I|\gamma'(t)|\,dt=\length E.
\]
The identity $I=h(J)$ is used only to specify the same physical arc
$f(I)=g(J)$; no estimate for a welding pull-back or change of variables
through $h$ is used. Use the function $F_I$ defined above. By
\eqref{eq:positive-arclength-kernel},
\[
 2\pi iF_I(w)=\int_I\frac{|\gamma'(t)|}{\gamma(t)-w}\,dt.
\]
The arc $E$ is rectifiable. On any compact subset of $\Omega_+$ the
denominator stays a positive distance from zero, so the integrand
and its derivative are dominated by constant multiples of
$|\gamma'(t)|\in L^1(I)$. Differentiation under the integral is
therefore justified and gives
\begin{equation}\label{eq:continuous-derivative}
 2\pi iF_I'(w)=\int_I
       \frac{|\gamma'(t)|}{(w-\gamma(t))^2}\,dt.
\end{equation}

Let $B_J$ and $\lambda_J$ be furnished by
Lemma~\ref{lem:comparison-ball-audited}. Taking
$z=z'=\gamma(t)$ in \eqref{eq:ball-cone-audited} gives
\[
 \operatorname{Re}\frac{\lambda_J}{(w-\gamma(t))^2}
 \ge\frac{c_1}{d^2},\qquad w\in B_J,\quad e^{it}\in I.
\]
The weight $|\gamma'(t)|$ is nonnegative. Thus
\eqref{eq:continuous-derivative} yields the pointwise lower bound
\begin{align}
 2\pi|F_I'(w)|
 &\ge\operatorname{Re}\bigl(2\pi i\lambda_J F_I'(w)\bigr)\notag\\
 &=\int_I |\gamma'(t)|\,
       \operatorname{Re}\frac{\lambda_J}{(w-\gamma(t))^2}\,dt\notag\\
 &\ge\frac{c_1L}{d^2},\qquad w\in B_J.
 \label{eq:continuous-pointwise-lower}
\end{align}
Since $|B_J|\asymp d^2$ by \eqref{eq:ball-scale-audited},
\begin{equation}\label{eq:continuous-energy-lower}
 \int_{B_J}|F_I'(w)|^2\,dA(w)
 \ge\frac{c_1^2L^2}{4\pi^2d^4}|B_J|
 \ge c\frac{L^2}{d^2}.
\end{equation}
On the other hand, the local energy estimate in
Lemma~\ref{lem:pullback-ball-audited}, followed by the Faber bound
\eqref{eq:PsiI-bound}, gives
\begin{align}
 \int_{B_J}|F_I'(w)|^2\,dA(w)
 &\le C_\varepsilon\|F_I\circ g\|_{\BMOA}^{\,2}\notag\\
 &\le C\|\Faber_f^+\|^2.
 \label{eq:continuous-energy-upper}
\end{align}
Comparing \eqref{eq:continuous-energy-lower} and
\eqref{eq:continuous-energy-upper}, we obtain
\begin{equation}\label{eq:interior-faber-length}
 \length g(J)\le C\|\Faber_f^+\|\diam g(J),
 \qquad |J|<\ell_0,
\end{equation}
with a constant independent of $J$. The geometric lemmas used here require only a quasidisk; the energy lemma requires only a conformal pull-back.

Rectifiability is a standing assumption here. Consequently the final
geometric argument in the proof of
Theorem~\ref{thm:geometric-converse-audited} applies directly to
\eqref{eq:interior-faber-length}: bounded turning supplies the joining
subarc for small chords, and $\length\Gamma/2$ controls the shorter
subarc for large chords. This proves the chord-arc condition.
\end{proof}

\subsection{Weak-star approximation}\label{sec:weak-star-faber}
Rectifiability already makes the classical Faber integral well defined on BMOA. The weak-star topology is used only to extend identities and uniform estimates from analytic polynomials, which are not norm dense in BMOA, to the whole space. We use the standard $H^1$--BMO weak-star topology; pointwise statements are made for fixed representatives, while operator identities on $\Bplus$ are understood modulo constants.

\begin{lemma}\label{lem:weak-star}
Parts \textup{(a)} and \textup{(b)} are statements about BMOA alone.
For parts \textup{(c)} and \textup{(d)}, assume $h\in\SQS(\T)$.
\begin{enumerate}[label=\textup{(\alph*)}]
\item For every $\varphi\in\BMOA(\D)$ there is a sequence of analytic polynomials $p_n$ such that
\[
 \sup_n\|p_n\|_{\BMOA}\le C\|\varphi\|_{\BMOA},
 \qquad p_n\longrightarrow\varphi
\]
locally uniformly in $\D$ and weak-star in BMOA.
\item If $\varphi_n$ is bounded in BMOA and converges locally uniformly to a holomorphic function $\varphi$, then $\varphi\in\BMOA$ and $\varphi_n\to\varphi$ weak-star in BMOA.
\item The pull-back $C_h$ on BMO and the analytic projection $A_h=P_+C_h|_{\Bplus}$ are weak-star continuous.
\item If $A_h$ is a bounded isomorphism, then $A_h^{-1}$ is weak-star continuous.
\end{enumerate}
\end{lemma}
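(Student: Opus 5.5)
Parts (a) and (b) rest on the duality $(H^1)^*=\BMOA$, realized through the Cauchy pairing $\langle \varphi,k\rangle=\lim_{r\uparrow1}\frac1{2\pi}\int_\T\varphi(r\zeta)\overline{k(\zeta)}\,|d\zeta|$. For (a) I take $p_n=\sigma_n\varphi$, the Fej\'er means of the boundary function. Since $\varphi$ is analytic, these are analytic polynomials. Fej\'er means are convolutions with a positive kernel of mass one, and BMO is translation invariant, so
\[
 \|\sigma_n\varphi\|_{\BMO}\le\|\varphi\|_{\BMO}.
\]
Local uniform convergence in $\D$ follows from the Taylor coefficients: $\sigma_n\varphi(z)=\sum_{k\le n}(1-\tfrac{k}{n+1})a_kz^k$, and on compact subsets the tails are dominated geometrically because $|a_k|\le\|\varphi\|_{H^2}$. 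For weak-star convergence I pair against $k\in H^1$. The pairing $\langle\sigma_n\varphi,k\rangle$ equals $\langle\varphi,\sigma_nk\rangle$, and $\sigma_nk\to k$ in $H^1$, so weak-star convergence follows.

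For (b), I first use Banach--Alaoglu together with separability of $H^1$ to extract a weak-star convergent subsequence. Any weak-star limit $\chi$ must coincide with $\varphi$. To see this, note that the Taylor coefficients are weak-star continuous functionals (pairing with $z^k\in H^1$), and locally uniform convergence determines these coefficients. Hence every subsequence has a further subsequence converging to the same limit, so the whole sequence converges weak-star. Moreover $\|\varphi\|\le\liminf\|\varphi_n\|$, so $\varphi\in\BMOA$. Throughout, constants are handled by working modulo constants or by adding the zeroth coefficient.

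For (c), my approach is to exhibit $C_h$ as the adjoint of a bounded operator on $H^1$, or on the real space $H^1_{\mathrm{at}}(\T)$, whose dual is $\BMO$. The natural predual operator is
\[
 T a=(a\circ h^{-1})\,(h^{-1})'.
\]
Then $\int (u\circ h)\,a=\int u\,Ta$ by change of variables, which is legitimate since $h\in\SQS$ is absolutely continuous. I must show $T$ is bounded on $H^1_{\mathrm{at}}$, and I expect this to be the main obstacle. My plan is to argue by duality: Jones's theorem gives boundedness of $C_h$ on BMO, so the adjoint of $C_h$ on $\BMO^*$ restricts to $H^1$ as $T$. This requires checking that $T$ maps atoms into $H^1$. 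I would verify this via the $A_\infty$ property: for an atom $a$ supported on $I$, $Ta$ has mean zero, is supported on $h(I)$, and satisfies an $L^p$ bound coming from the reverse H\"older inequality for $(h^{-1})'$. This makes $Ta$ a multiple of a $p$-atom for some $p>1$, which suffices for membership in $H^1$.

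The Riesz projection $P_+$ is self-adjoint with respect to the pairing (it is the adjoint of the Riesz projection on $H^1$), so it is weak-star continuous. Composing, $A_h$ is weak-star continuous.

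For (d), I use that a bounded isomorphism which is the adjoint of a bounded operator $S$ on the predual has an inverse that is also an adjoint: $S$ is itself an isomorphism, by the closed range theorem, and $(A_h)^{-1}=(S^{-1})^*$. This inverse is therefore weak-star continuous.

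If identifying $S$ on the predual of $\Bplus$ proves awkward, I would fall back on the Krein--Smulian theorem. It suffices to check that $A_h^{-1}$ is weak-star continuous on bounded sets, and bounded sets are metrizable. Take a bounded sequence $y_n=A_hx_n\to y$ weak-star. Then $x_n$ is bounded, and any weak-star cluster point $x$ satisfies $A_hx=y$ by (c), so $x=A_h^{-1}y$. Since all cluster points agree, $x_n\to A_h^{-1}y$ weak-star.
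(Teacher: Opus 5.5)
Your proposal is correct. Parts (c) and (d) follow the paper's argument. You use the same transfer operator $(a\circ h^{-1})(h^{-1})'$, the same reverse H\"older estimate showing that the image of an atom on $I$ is a fixed multiple of a $p$-atom on $h(I)$, and the same closed-range argument for the preadjoint. Two small points to make explicit: the reverse H\"older bound needs $h^{-1}\in\SQS$, and on $\T$ you must also treat the constant atom, whose image $(h^{-1})'$ lies in $L^p$.

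Your duality wrapper in (c) is valid but redundant. It uses Jones's theorem and only qualitative membership $Ta\in H^1$ on finite atomic sums, reading off $\|Ta\|_{H^1}\lesssim\|C_h\|\,\|a\|_{H^1}$ from $\langle u,Ta\rangle=\langle C_hu,a\rangle$. The reverse H\"older bound already gives this. Your Krein--Smulian fallback in (d) is also sound.

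The genuine differences are in (a) and (b).

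In (a), the paper dilates $\varphi$ and quotes two facts: uniform boundedness of $\varphi\mapsto\varphi_r$ on BMOA, and the weak-star convergence $\varphi_r\to\varphi$. It then truncates Taylor series with a diagonal choice of $r_n$. Your Fej\'er means produce polynomials in one step. Your bound $\|\sigma_n\varphi\|_{\BMO}\le\|\varphi\|_{\BMO}$ is self-contained: $(\sigma_n\varphi)_I$ is exactly the Fej\'er-kernel average of the means of the rotated function, so Fubini gives the estimate with constant $1$. Your weak-star convergence is proved rather than quoted, via $\langle\sigma_n\varphi,k\rangle=\langle\varphi,\sigma_nk\rangle$ and $\sigma_nk\to k$ in $H^1$.

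In (b), the paper is constructive. Fatou's lemma on the Carleson measures puts $\varphi$ in BMOA, and convergence is checked by approximating each $H^1$ test function by polynomials. Your route is soft: Banach--Alaoglu with separability of $H^1$, then identification of every subsequential limit through the weak-star continuous Taylor-coefficient functionals, then weak-star lower semicontinuity of the dual norm. It is shorter and avoids the Carleson-measure characterization. The cost is that it rests on metrizability of bounded sets rather than an explicit estimate.
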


\begin{proof}
For (a), put $\varphi_r(z)=\varphi(rz)$.  Radial dilation is uniformly bounded on BMOA, and $\varphi_r\to\varphi$ weak-star and locally uniformly as $r\uparrow1$.  For fixed $r<1$, the Taylor series of $\varphi_r$ converges uniformly on $\overline\D$; hence its partial sums converge to $\varphi_r$ in BMOA norm.  Choose $r_n\uparrow1$ and then a Taylor polynomial $p_n$ of $\varphi_{r_n}$ so that
\[
 \|p_n-\varphi_{r_n}\|_{\BMOA}<2^{-n}\|\varphi\|_{\BMOA}.
\]
If $\varphi$ is constant, take $p_n=\varphi$. Otherwise the preceding choice gives the required bounded polynomial sequence, with $p_n(0)=\varphi(0)$.

For (b), apply Fatou's lemma in the Carleson-measure characterization to the locally convergent derivatives. This first gives $\varphi\in\BMOA$ with a bound by $C\sup_n\|\varphi_n\|_{\BMOA}$. The values $\varphi_n(0)$ also converge, so the full norms of the chosen representatives are bounded. Now let $a$ be an analytic $H^1$ test function.  Choose analytic polynomials $q_m$ with $q_m(0)=a(0)$ and $q_m\to a$ in $H^1$.  Pairing against a fixed $q_m$ depends on only finitely many Taylor coefficients and therefore converges under local uniform convergence.  The uniform BMOA bound gives
\[
 |\langle\varphi_n-\varphi,a-q_m\rangle|
 \le C\sup_k\|\varphi_k\|_{\BMOA}\,\|a-q_m\|_{H^1}.
\]
First let $n\to\infty$ and then $m\to\infty$.

For (c), consider the transfer operator
\[
 \mathcal L_h a=(a\circ h^{-1})(h^{-1})',
\]
where derivatives of circle homeomorphisms mean angular derivatives
of their increasing lifts. We spell out its boundedness on real $H^1$.
Since $h^{-1}\in\SQS(\T)$, the weight $w=(h^{-1})'$ belongs to
$A_\infty$ and satisfies a reverse H\"older inequality for some $p>1$;
see \cite{CF}. If $a$ is a mean-zero $L^\infty$ atom supported on an
arc $I$, with $\|a\|_\infty\le |I|^{-1}$, then $\mathcal L_h a$
is supported on $h(I)$, has integral zero, and
\[
 \|\mathcal L_h a\|_{L^p}
 \le |I|^{-1}\left(\int_{h(I)}w^p\right)^{1/p}
 \le C|h(I)|^{1/p-1},
\]
because $\int_{h(I)}w=|I|$. The $L^p$-atomic characterization of real
$H^1$ therefore gives a uniform $H^1$ bound for $\mathcal L_h a$.
On the circle the constant atom is handled by $w\in L^p(\T)$.
The atomic decomposition extends $\mathcal L_h$ to a bounded
operator on real $H^1$; see \cite{Garnett} for these Hardy-space facts.

For an atom $a$ and $u\in\BMO$, change of variables gives
\[
 \int_\T u(h(\zeta))a(\zeta)\,|d\zeta|
 =\int_\T u(\eta)(a\circ h^{-1})(\eta)(h^{-1})'(\eta)\,|d\eta|.
\]
These integrals are legitimate: on the left $u\circ h\in\BMO$ is
locally integrable, and on the right $\mathcal L_h a\in L^p$ pairs
with the local $L^{p/(p-1)}$ representative of $u$. Atomic density
and BMO--$H^1$ duality extend the identity to the full pairing. Thus
$C_h=\mathcal L_h^*$ is weak-star continuous. The Riesz projections
are adjoints of bounded projections on the real Hardy predual, so
$A_h=P_+C_h|_{\Bplus}$ is weak-star continuous as well. The same
argument applies on the quotient by constants, whose predual
consists of the mean-zero Hardy functions.

For (d), write $A_h=L^*$ for its bounded preadjoint $L$ on the analytic $H^1$ predual.  If $A_h$ is bijective, then $L^*$ is bijective.  The closed range theorem implies that $L$ has closed range; $\ker L^*=\{0\}$ implies that $\operatorname{Ran}L$ is dense, hence $L$ is onto; and $\operatorname{Ran}L^*=H^1{}^*$ implies $\ker L=\{0\}$.  Therefore $L$ is a bounded isomorphism and
\[
 A_h^{-1}=(L^{-1})^*,
\]
which is weak-star continuous.
\end{proof}

\subsection{The exact operator identity}\label{sec:faber-equivalence}

Throughout this subsection we assume, in addition to rectifiability,
that $h\in\SQS(\T)$. This is automatic when $\Gamma$ is chord-arc;
it is also a consequence of Faber boundedness by
Theorem~\ref{thm:bounded-faber-implies-chord-arc}. In particular, the
uses of $C_h$, $C_{h^{-1}}$ and Lemma~\ref{lem:weak-star}\textup{(c)--(d)}
below occur only under their stated hypotheses.

\begin{lemma}\label{lem:polynomial-faber-cauchy}
For every analytic polynomial $p$,
\[
 \Faber_fp=C_\Gamma(p\circ f^{-1})
 \qquad\text{on }\widehat\C\setminus\Gamma.
\]
Consequently,
\[
 (\Faber_f^+p)^*-(\Faber_f^-p)^*=p\circ f^{-1}
\]
for arclength almost every point of $\Gamma$.
\end{lemma}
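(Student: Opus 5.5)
The identity is a change of variables in the contour integral, followed by the classical Plemelj formula for Cauchy integrals on rectifiable curves. Here $C_\Gamma$ denotes the Cauchy integral
\[
 C_\Gamma u(w)=\frac1{2\pi i}\int_\Gamma\frac{u(z)}{z-w}\,dz,\qquad w\notin\Gamma.
\]

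For the first assertion, fix a polynomial $p$ and $w\notin\Gamma$. The boundary map $\zeta\mapsto f(\zeta)$ is an absolutely continuous, orientation-preserving parametrization of $\Gamma$, because $\Gamma$ is rectifiable and $f'\in H^1(\Ds)$. Substituting $z=f(\zeta)$, so that $dz=f'(\zeta)\,d\zeta$ a.e., gives
\[
 \frac{1}{2\pi i}\int_\T\frac{f'(\zeta)p(\zeta)}{f(\zeta)-w}\,d\zeta
 =\frac1{2\pi i}\int_\Gamma\frac{(p\circ f^{-1})(z)}{z-w}\,dz
 =C_\Gamma(p\circ f^{-1})(w).
\]
Both integrals are absolutely convergent, since $|f(\zeta)-w|\ge\dist(w,\Gamma)>0$ and $f'\in L^1(\T)$. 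The change of variables for absolutely continuous parametrizations is routine. The orientation convention is the one inherited from $f|_\T$, which matches the convention of Lemma~\ref{lem:two-point-jump-audited}.

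For the jump relation, set $u=p\circ f^{-1}$. This is a continuous function on $\Gamma$; more precisely, it is the restriction to $\Gamma$ of a function of the form $p(f^{-1})$, where $f^{-1}$ extends to a quasiconformal homeomorphism. The plan is to invoke the Plemelj--Privalov theorem for rectifiable Jordan curves: for $u\in L^1(\Gamma,ds)$, the Cauchy integral $C_\Gamma u$ has non-tangential limits from $\Omega_+$ and from $\Omega_-$ at arclength-a.e. point of $\Gamma$, and the difference of the two limits is $u$ (Privalov's theorem). Combined with the first part, this gives $(\Faber_f^+p)^*-(\Faber_f^-p)^*=p\circ f^{-1}$ arclength-a.e.

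The main obstacle is that the existence of non-tangential boundary values of Cauchy integrals on a merely rectifiable curve is genuinely nontrivial. If one prefers not to cite Privalov's theorem, there is a more self-contained route: approximate $u$ uniformly on $\Gamma$ by restrictions of rational functions with poles off $\Gamma$, using Mergelyan's theorem, or Walsh's theorem since $\Gamma$ is a Jordan curve. For such functions the jump is computed directly by the residue theorem. One would then pass to the limit using the a.e. existence of non-tangential limits, which follows from the general theory of Cauchy integrals with $L^1$ densities (Privalov). Either way, I would simply cite the classical theorem at this step; the only care needed is checking that the orientation sign convention matches.
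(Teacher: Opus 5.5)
Your proposal is correct and is essentially the paper's proof. The substitution $z=f(\zeta)$ along the absolutely continuous boundary parametrization identifies the Faber integral with $C_\Gamma(p\circ f^{-1})$, and the classical Plemelj--Privalov jump formula for rectifiable Jordan curves then gives the boundary relation.

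Your optional Mergelyan route would not close as written. The limit passage needs quantitative control of the boundary values of $C_\Gamma(u-r_n)$ in terms of $u-r_n$, and mere a.e.\ existence of those limits does not provide it. If you want to avoid Privalov, a cleaner route is to deform the contour to $|\zeta|=r\downarrow 1$ and compute residues. This gives $\Faber_f^{+}p=P$ on $\Omega_{+}$ and $\Faber_f^{-}p=P-p\circ f^{-1}$ on $\Omega_{-}$, with $P$ a polynomial, so the jump holds at every point of $\Gamma$.
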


\begin{proof}
The boundary function $p\circ f^{-1}$ is continuous on $\Gamma$.  Using the absolutely continuous boundary parametrization $\zeta=f(\eta)$ gives, for $w\notin\Gamma$,
\begin{align*}
 C_\Gamma(p\circ f^{-1})(w)
 &=\frac1{2\pi i}\int_\Gamma
 \frac{p(f^{-1}(\zeta))}{\zeta-w}\,d\zeta\\
 &=\frac1{2\pi i}\int_\T
 \frac{p(\eta)f'(\eta)}{f(\eta)-w}\,d\eta\\
 &=\Faber_fp(w).
\end{align*}
The Plemelj jump formula on a rectifiable Jordan curve now gives the boundary identity.  Since the exterior Cauchy integral vanishes at infinity, $\Faber_f^-p(\infty)=0$.
\end{proof}

\begin{lemma}\label{lem:polynomial-identity}
For every analytic polynomial $p$,
\begin{equation}\label{eq:poly-identity}
 A_{h^{-1}}\bigl(C_g\Faber_f^{+}p\bigr)=p.
\end{equation}
\end{lemma}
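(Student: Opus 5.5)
This mirrors the finite-jump argument of Section~\ref{sec:discrete-jumps}, with the jump relation of Lemma~\ref{lem:polynomial-faber-cauchy} replacing the logarithmic jumps. Fix an analytic polynomial $p$ and set
\[
 \psi=C_g\Faber_f^{+}p=(\Faber_f^{+}p)\circ g,
 \qquad
 v=(\Faber_f^{-}p)\circ f .
\]
The plan is to show three things: $\psi\in\Bplus$, $v\in\Bminus$, and
\[
 \psi\circ h^{-1}-v=p \quad\text{a.e. on }\T .
\]
Once these hold, applying $P_{+}$ kills $v$ and fixes $p$, which gives $A_{h^{-1}}\psi=p$ modulo constants.

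The boundary identity is obtained by composing the jump relation of Lemma~\ref{lem:polynomial-faber-cauchy} with $f$ on $\T$. On the $\Omega_+$ side, $(\Faber_f^+p)^*\circ f=(\Faber_f^+p)^*\circ g\circ h^{-1}=\psi^*\circ h^{-1}$. On the $\Omega_-$ side, $(\Faber_f^-p)^*\circ f=v^*$. Finally, $p\circ f^{-1}\circ f=p$. Two measure-theoretic points need care here.

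First, the relation on $\Gamma$ holds arclength-a.e. It must transfer to Lebesgue-a.e. on $\T$ through $f$ and through $g\circ h^{-1}$. Since $\Gamma$ is rectifiable, the F.~and M.~Riesz theorem makes harmonic measure from each side mutually absolutely continuous with arclength, so null sets correspond correctly.

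Second, the non-tangential limits on $\Gamma$ must agree with the radial limits of $\psi$ and $v$ on $\T$. This follows because conformal maps of quasidisks, or just Jordan domains with rectifiable boundary, carry Stolz angles into nontangential approach regions a.e. Alternatively, one can invoke Lindelöf's theorem, since the functions have limits along the image curves.

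Membership in the trace spaces is the main obstacle. Here are the steps I would try first.

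(a) For $w$ in compact subsets, the kernel $f'(\zeta)/(f(\zeta)-w)$ is controlled, and $p\circ f^{-1}$ is a Lipschitz-type continuous function on $\Gamma$. The Cauchy integral of a smooth function on a rectifiable quasicircle should be bounded. To see this, write $\Faber_f p$ as a finite sum of Faber polynomials $\Faber_f(\zeta^n)$.

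(b) For the interior side, $\Faber_f^+(\zeta^n)$ is the $n$-th Faber polynomial $\Phi_n$, an honest polynomial in $w$. Hence $\psi=\Phi\circ g$ with $\Phi$ a polynomial. Then $\psi$ is bounded analytic, so it lies in $H^\infty\subset\BMOA$.

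(c) For the exterior side, the classical Faber relation gives $\Faber_f^-(\zeta^n)=\Phi_n-f^{-1}(w)^n$ in $\Omega_-$. Since $\Faber_f^-$ vanishes at $\infty$, this says $v(\zeta)=\Phi(f(\zeta))-p(\zeta)$. This is precisely the content of the jump. The function $v$ is holomorphic in $\Ds$, vanishes at $\infty$, and is bounded near $\T$ because $\Phi$ is bounded on $\overline{\Omega_-}\cap\{|w|\le R\}$. So $v$ is bounded in $\Ds$ and $v\in\BMOA_0(\Ds)$.

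With this explicit formula the identity becomes nearly algebraic. On $\T$ we have $\psi\circ h^{-1}=\Phi\circ g\circ h^{-1}=\Phi\circ f$. Therefore $\psi\circ h^{-1}-v=\Phi\circ f-(\Phi\circ f-p)=p$, with continuous boundary values, so no a.e.\ subtleties remain. Applying $P_+$ then yields \eqref{eq:poly-identity}. The only nonroutine check is the Faber-polynomial identity $\frac1{2\pi i}\int_\T\frac{\zeta^nf'(\zeta)}{f(\zeta)-w}d\zeta=\Phi_n(w)$ for $w\in\Omega_+$. I would verify it by deforming $\T$ to $|\zeta|=R$ large and applying the residue theorem at $\zeta=\infty$ in the Laurent expansion of $f$.
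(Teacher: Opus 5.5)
Your proof is correct and follows the paper's argument. Both proofs write $\Faber_f^+p$ as a Faber-polynomial combination $\Phi$, so that $\psi=\Phi\circ g$ is bounded, and both identify the exterior branch as $\Phi-p\circ f^{-1}$ by contour deformation, so that $v\in H^\infty_0(\Ds)$ lies in $\Bminus$; applying $P_+$ to $\psi\circ h^{-1}-v=p$ then finishes. One small improvement over the paper: your observation that $\psi\circ h^{-1}=\Phi\circ f$ on $\T$ makes this identity hold everywhere with continuous boundary values, so the a.e.\ transfer issues you first raised, and the Plemelj step the paper cites, are not actually needed.
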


\begin{proof}
Let
\[
 F_{+}=\Faber_f^{+}p,
 \qquad
 F_{-}=\Faber_f^{-}p,
 \qquad
 \psi=F_{+}\circ g,
 \qquad
 v_{-}=F_{-}\circ f.
\]
Since $p$ is a polynomial, $F_{+}$ is a finite linear combination of Faber polynomials and $\psi\in \Bplus$.  Also $F_{-}$ is holomorphic in $\Omega_{-}$ and vanishes at infinity.
More explicitly, if $P=F_+$ is the corresponding Faber polynomial
combination, the classical polynomial-part identity gives
\[
 F_-(w)=P(w)-p(f^{-1}(w)),\qquad w\in\Omega_-.
\]
Indeed, $P$ is the polynomial part at infinity of $p\circ f^{-1}$;
contour deformation gives the displayed exterior branch. Both terms
extend continuously to $\Gamma$, their difference is bounded near
$\Gamma$, and it vanishes at infinity. Thus $F_-\circ f\in H_0^\infty(\Ds)$.

Compose the jump relation of Lemma~\ref{lem:polynomial-faber-cauchy} with the exterior boundary map $f$.  Using
\[
 f=g\circ h^{-1}\quad\text{on }\T,
\]
we obtain
\[
 p=F_{+}\circ f-F_{-}\circ f
 =\psi\circ h^{-1}-v_{-}.
\]
The exterior Hardy-class assertion proved above gives $v_{-}\in\Bminus$. Applying $P_{+}$ gives
\[
 p=P_{+}(\psi\circ h^{-1})=A_{h^{-1}}\psi,
\]
which is \eqref{eq:poly-identity}.
\end{proof}

\begin{proposition}[A lower projection estimate produces the bounded Faber operator]\label{prop:lower-bound-produces-faber}
Assume that $\Gamma$ is rectifiable and that, for some $c>0$,
\begin{equation}\label{eq:lower-bound-faber-section}
 \|A_{h^{-1}}\psi\|_{\BMOA}\ge c\|\psi\|_{\BMOA},\qquad \psi\in \Bplus.
\end{equation}
Then the classical interior Faber operator maps $\BMOA(\D)$ boundedly into $\BMOA(\Omega_+)$ and, with
\[
 T_f=C_g\Faber_f^+,
\]
one has
\begin{equation}\label{eq:right-inverse-from-lower-bound}
 A_{h^{-1}}T_f=I_{\Bplus}.
\end{equation}
Consequently $A_{h^{-1}}$ is surjective.  Since \eqref{eq:lower-bound-faber-section} already implies injectivity and closed range, $A_{h^{-1}}$ is a bounded isomorphism and
\begin{equation}\label{eq:inverse-from-lower-bound-faber}
 T_f=(A_{h^{-1}})^{-1}.
\end{equation}
In particular,
\[
 \|\Faber_f^+\varphi\|_{\BMOA(\Omega_+)}\le Cc^{-1}\|\varphi\|_{\BMOA(\D)}.
\]
\end{proposition}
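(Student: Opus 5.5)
The plan is to define $T_f$ on polynomials, use the lower bound to get uniform control there, and then extend to all of BMOA by weak-star approximation. Throughout, Lemma~\ref{lem:weak-star} needs $h\in\SQS(\T)$, which is the standing assumption of this subsection.

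\emph{Polynomial bound.} For an analytic polynomial $p$, Lemma~\ref{lem:polynomial-identity} gives $A_{h^{-1}}(T_fp)=p$, where $T_fp=C_g\Faber_f^+p\in\Bplus$. Combining this with \eqref{eq:lower-bound-faber-section} gives
\[
 \|T_fp\|_{\BMOA}\le c^{-1}\|p\|_{\BMOA}.
\]
So $T_f$ is uniformly bounded on polynomials.

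\emph{Extension to general $\varphi$.} Fix $\varphi\in\BMOA(\D)$. Lemma~\ref{lem:weak-star}(a) supplies polynomials $p_n\to\varphi$ locally uniformly and weak-star, with $\sup_n\|p_n\|_{\BMOA}\le C\|\varphi\|_{\BMOA}$. The main step is to show that $T_fp_n\to T_f\varphi$ locally uniformly in $\D$, where $T_f\varphi=C_g\Faber_f^+\varphi$ is given by the $H^1$--BMOA pairing. This amounts to showing $\Faber_f^+p_n(w)\to\Faber_f^+\varphi(w)$ locally uniformly in $w\in\Omega_+$.

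\begin{itemize}
\item Pointwise convergence holds because $K_w\in H^1(\Ds)$ and $p_n\to\varphi$ weak-star in BMO, and the pairing is exactly the $H^1$--BMO pairing.
\item To upgrade to local uniform convergence, note that $w\mapsto K_w$ is continuous into $H^1$ on compact subsets of $\Omega_+$, since the denominator stays away from zero. With the uniform bound on $\|p_n\|$, this gives equicontinuity.
\item Alternatively, the uniform BMOA bound on $T_fp_n$ together with normalization gives a normal family, and pointwise convergence then identifies the limit.
\end{itemize}

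Lemma~\ref{lem:weak-star}(b) now yields $T_f\varphi\in\BMOA$ with
\[
 \|T_f\varphi\|\le Cc^{-1}\|\varphi\|,
\]
and also $T_fp_n\to T_f\varphi$ weak-star. I expect this to be the main obstacle: the polynomials are not norm dense, so both the representative-level normalization (modulo constants) and the $H^1$ continuity in $w$ have to be handled carefully.

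\emph{Right inverse and conclusion.} Lemma~\ref{lem:weak-star}(c) says $A_{h^{-1}}$ is weak-star continuous, which applies because $h^{-1}\in\SQS$. Passing to the limit in $A_{h^{-1}}T_fp_n=p_n$ then gives $A_{h^{-1}}T_f\varphi=\varphi$. This is \eqref{eq:right-inverse-from-lower-bound}, and it shows $A_{h^{-1}}$ is surjective.

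The lower bound gives injectivity and closed range, so $A_{h^{-1}}$ is a bounded bijection and hence an isomorphism. Multiplying the right-inverse identity on the left by $(A_{h^{-1}})^{-1}$ yields \eqref{eq:inverse-from-lower-bound-faber}. The final norm estimate is the extended bound above, transported through the definition of $\BMOA(\Omega_+)$ via $C_g$.
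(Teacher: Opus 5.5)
Your proposal is correct and follows the paper's proof essentially step for step. You get the polynomial estimate from Lemma~\ref{lem:polynomial-identity} and the lower bound, approximate by polynomials via Lemma~\ref{lem:weak-star}(a), obtain pointwise convergence through the $H^1$ kernel $K_w$ and upgrade it to local uniform convergence by a normal-family argument, apply Lemma~\ref{lem:weak-star}(b) for the weak-star limit and the norm bound, and use Lemma~\ref{lem:weak-star}(c) to pass to the limit in $A_{h^{-1}}T_fp_n=p_n$. Your alternative equicontinuity route, via continuity of $w\mapsto K_w$ into $H^1$, is also valid; it needs the full norm of $p_n$, which is available since $p_n(0)=\varphi(0)$.
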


\begin{proof}
We separate the polynomial estimate, the weak-star passage, and the right-inverse identity.

\emph{Step 1: uniform estimate for polynomial data.}
For an analytic polynomial $p$, Lemma~\ref{lem:polynomial-identity} gives
\[
 A_{h^{-1}}\bigl(C_g\Faber_f^+p\bigr)=p.
\]
The lower bound \eqref{eq:lower-bound-faber-section} therefore yields
\begin{equation}\label{eq:poly-faber-lower-estimate}
 \|C_g\Faber_f^+p\|_{\BMOA}\le c^{-1}\|p\|_{\BMOA}.
\end{equation}
Thus the desired BMOA estimate is already available on the polynomial subspace, even though that subspace is not norm dense in BMOA.

\emph{Step 2: passage from polynomials to an arbitrary BMOA function.}
Fix $\varphi\in \Bplus$.  Choose the bounded polynomial sequence $p_n$ supplied by Lemma~\ref{lem:weak-star}:
\[
 \sup_n\|p_n\|_{\BMOA}\le C\|\varphi\|_{\BMOA},\qquad
 p_n\stackrel{w^*}{\longrightarrow}\varphi,
\]
and $p_n\to\varphi$ locally uniformly in $\D$.  Put
\[
 \psi_n=C_g\Faber_f^+p_n.
\]
By \eqref{eq:poly-faber-lower-estimate},
\begin{equation}\label{eq:psi-n-bounded}
 \sup_n\|\psi_n\|_{\BMOA}\le Cc^{-1}\|\varphi\|_{\BMOA}.
\end{equation}
For every fixed $z\in\D$, set $w=g(z)\in\Omega_+$.  Rectifiability gives $f'\in H^1(\Ds)$, and
\[
 K_w(\zeta)=\frac{f'(\zeta)}{f(\zeta)-w}\in H^1(\Ds).
\]
Hence the Faber point evaluation is a weak-star continuous $H^1$--BMOA pairing:
\[
 \psi_n(z)=\Faber_f^+p_n(w)
 =\frac1{2\pi i}\langle p_n,K_w\rangle
 \longrightarrow
 \frac1{2\pi i}\langle\varphi,K_w\rangle
 =\Faber_f^+\varphi(w).
\]
Thus $\psi_n$ converges pointwise on $\D$ to the analytic function
\[
 \psi(z)=\Faber_f^+\varphi(g(z)).
\]
The uniform BMOA bound \eqref{eq:psi-n-bounded}, together with boundedness of one point value (for instance at $z=0$, which follows from the displayed pointwise convergence), makes $\{\psi_n\}$ locally uniformly bounded.  Montel's theorem therefore upgrades the pointwise convergence to local uniform convergence.  Lemma~\ref{lem:weak-star}(b) gives
\[
 \psi_n\stackrel{w^*}{\longrightarrow}\psi
 \quad\text{in BMOA}.
\]
Since the BMOA norm is weak-star lower semicontinuous on bounded sets,
\[
 \|\psi\|_{\BMOA}\le \liminf_n\|\psi_n\|_{\BMOA}
 \le Cc^{-1}\|\varphi\|_{\BMOA}.
\]
Therefore $\Faber_f^+\varphi\in\BMOA(\Omega_+)$ and the Faber operator is bounded.

\emph{Step 3: the right-inverse identity.}
For every $n$,
\[
 A_{h^{-1}}\psi_n=p_n.
\]
The operator $A_{h^{-1}}$ is weak-star continuous by Lemma~\ref{lem:weak-star}(c).  Passing to the weak-star limit on both sides gives
\[
 A_{h^{-1}}\psi=\varphi.
\]
Since $\psi=T_f\varphi$, this proves \eqref{eq:right-inverse-from-lower-bound}.  Hence $A_{h^{-1}}$ is onto.  The lower bound gives injectivity, and the bounded inverse theorem now shows that $A_{h^{-1}}$ is a bounded isomorphism.  Its inverse is the right inverse $T_f$, proving \eqref{eq:inverse-from-lower-bound-faber}.
\end{proof}

\begin{proposition}\label{prop:faber-bounded-right-inverse}
Assume that
\[
 \Faber_f^{+}:\BMOA(\D)\longrightarrow\BMOA(\Omega_{+})
\]
is bounded, and set
\[
 T_f=C_g\Faber_f^{+}:\Bplus\longrightarrow \Bplus.
\]
Then
\begin{equation}\label{eq:right-inverse}
 A_{h^{-1}}T_f=I_{\Bplus}.
\end{equation}
Consequently, $T_f$ is injective and $A_{h^{-1}}$ is surjective.
\end{proposition}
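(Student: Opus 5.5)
The plan is to copy Step~3 of Proposition~\ref{prop:lower-bound-produces-faber}, now using the assumed boundedness of $\Faber_f^+$ in place of the lower estimate. Two preliminary facts are needed. First, Theorem~\ref{thm:bounded-faber-implies-chord-arc} shows that $\Gamma$ is chord-arc, and \eqref{eq:h-beta-alpha} then gives $h\in\SQS(\T)$. Consequently $A_{h^{-1}}$ is defined, and by Lemma~\ref{lem:weak-star}(c) it is weak-star continuous. Second, $T_f=C_g\Faber_f^+$ is bounded on $\Bplus$, since by definition $\|C_gF\|_{\BMOA(\D)}=\|F\|_{\BMOA(\Omega_+)}$. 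It also descends to the quotient by constants, because $\Faber_f^+1=1$.

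The identity itself is proved in three steps.

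\emph{Polynomials.} For an analytic polynomial $p$, Lemma~\ref{lem:polynomial-identity} gives $A_{h^{-1}}T_fp=p$ directly.

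\emph{Approximation of general data.} For general $\varphi\in\Bplus$, take the polynomials $p_n$ of Lemma~\ref{lem:weak-star}(a). They satisfy $\sup_n\|p_n\|_{\BMOA}\le C\|\varphi\|_{\BMOA}$ and converge to $\varphi$ weak-star and locally uniformly. Set $\psi_n=T_fp_n$. By boundedness of $\Faber_f^+$ these are uniformly bounded in BMOA.

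\emph{Convergence of $\psi_n$.} Fix $z\in\D$ and put $w=g(z)$. Since $K_w\in H^1(\Ds)$, the point evaluation
\[
 \psi_n(z)=\frac1{2\pi i}\langle p_n,K_w\rangle
\]
is a weak-star continuous functional. Hence $\psi_n(z)\to T_f\varphi(z)$ pointwise. The uniform BMOA bound, together with convergence at $z=0$, makes $\{\psi_n\}$ locally uniformly bounded, via the Bloch estimate \eqref{eq:Bloch-BMOA-audited} integrated along radii. Montel's theorem upgrades this to local uniform convergence, and Lemma~\ref{lem:weak-star}(b) then yields $\psi_n\to T_f\varphi$ weak-star. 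Applying weak-star continuity of $A_{h^{-1}}$ to $A_{h^{-1}}\psi_n=p_n$ gives $A_{h^{-1}}T_f\varphi=\varphi$ modulo constants. This is \eqref{eq:right-inverse}.

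The consequences follow formally. If $T_f\varphi=0$, then $\varphi=A_{h^{-1}}T_f\varphi=0$, so $T_f$ is injective. Every $\varphi$ equals $A_{h^{-1}}(T_f\varphi)$, so $A_{h^{-1}}$ is surjective.

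The main point requiring care is the weak-star passage: polynomials are not norm dense in BMOA, so norm continuity alone does not extend the identity. This is exactly why the pointwise identification of the limit via the $H^1$ kernel $K_w$ is needed, as is the weak-star continuity of $A_{h^{-1}}$. The latter in turn rests on the strong quasisymmetry of $h^{-1}$, obtained here from the chord-arc conclusion of Theorem~\ref{thm:bounded-faber-implies-chord-arc}. One should also check that all identities are read modulo constants and that the representatives are normalized consistently at $0$.
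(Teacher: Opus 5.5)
Your proof is correct and follows the paper's argument essentially step for step. The steps are: the polynomial identity of Lemma~\ref{lem:polynomial-identity}, bounded polynomial approximants from Lemma~\ref{lem:weak-star}(a), pointwise convergence through the $H^1$ kernel $K_w$, Montel together with Lemma~\ref{lem:weak-star}(b) for weak-star convergence, and weak-star continuity of $A_{h^{-1}}$ to pass to the limit. Your explicit appeal to Theorem~\ref{thm:bounded-faber-implies-chord-arc} to secure $h\in\SQS(\T)$ simply makes visible what the paper builds into the standing hypothesis of that subsection, and it involves no circularity.
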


\begin{proof}
Fix $\varphi\in \Bplus$.  By Lemma~\ref{lem:weak-star}(a), choose analytic polynomials $p_n$ such that
\[
 \sup_n\|p_n\|_{\BMOA}\le C\|\varphi\|_{\BMOA},
 \qquad p_n\to\varphi
\]
weak-star and locally uniformly in $\D$.

For every $w\in\Omega_+$, the kernel
\[
 K_w(\zeta)=\frac{f'(\zeta)}{f(\zeta)-w}
\]
belongs to $H^1(\Ds)$.  Hence the Faber point evaluation is a weak-star continuous functional on BMOA:
\[
 \Faber_f^+p_n(w)
 =\frac1{2\pi i}\langle p_n,K_w\rangle
 \longrightarrow
 \frac1{2\pi i}\langle\varphi,K_w\rangle
 =\Faber_f^+\varphi(w).
\]
Since $\Faber_f^+$ is bounded, the BMOA seminorms of
\[
 T_fp_n=(\Faber_f^+p_n)\circ g
\]
are uniformly bounded.  The preceding pointwise convergence, after putting $w=g(z)$, also gives boundedness of the values $T_fp_n(0)$.  A BMOA seminorm together with one point value controls the function on every compact subset of the disk.  Thus $\{T_fp_n\}$ is locally uniformly bounded.  Since it converges pointwise to $T_f\varphi$, Montel's theorem, or Vitali's theorem, upgrades the convergence to local uniform convergence.  Lemma~\ref{lem:weak-star}(b) now gives
\[
 T_fp_n\stackrel{w^*}{\longrightarrow}T_f\varphi.
\]

For polynomials, Lemma~\ref{lem:polynomial-identity} gives
\[
 A_{h^{-1}}T_fp_n=p_n.
\]
The operator $A_{h^{-1}}$ is weak-star continuous by Lemma~\ref{lem:weak-star}(c).  Passing to the weak-star limit yields
\[
 A_{h^{-1}}T_f\varphi=\varphi.
\]
This proves \eqref{eq:right-inverse}.  A right inverse is necessarily injective, while the operator admitting it is necessarily surjective.
\end{proof}

\begin{proposition}\label{prop:projection-inverse-produces-faber}
Assume that $A_{h^{-1}}$ is a bounded isomorphism, and put
\[
 S=(A_{h^{-1}})^{-1}.
\]
Then the classical interior Faber operator is bounded and
\begin{equation}\label{eq:projection-produces-faber}
 C_g\Faber_f^+=S.
\end{equation}
In particular, $\Faber_f^+$ is a bounded isomorphism from $\BMOA(\D)$ onto $\BMOA(\Omega_+)$.
\end{proposition}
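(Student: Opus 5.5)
The plan is to reduce the statement to Proposition~\ref{prop:lower-bound-produces-faber}. First I need rectifiability. Since $A_{h^{-1}}$ is a bounded isomorphism, the bounded inverse theorem gives the lower estimate \eqref{eq:lower-Ahinv} with $c=\|S\|^{-1}$. Theorem~\ref{thm:rectifiability-discrete} then shows that $\Gamma$ is rectifiable. In fact Theorem~\ref{thm:geometric-converse-audited} shows that $\Gamma$ is chord-arc, so the classical interior Faber operator \eqref{eq:Faber-intro-new} is well defined on $\BMOA(\D)$ through the $H^1$--BMOA pairing. Because $\Gamma$ is a quasicircle by standing assumption, all hypotheses of Proposition~\ref{prop:lower-bound-produces-faber} now hold. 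That proposition gives that $\Faber_f^+:\BMOA(\D)\to\BMOA(\Omega_+)$ is bounded, with $\|\Faber_f^+\varphi\|\le C\|S\|\,\|\varphi\|$, and that $T_f=C_g\Faber_f^+$ satisfies $A_{h^{-1}}T_f=I_{\Bplus}$.

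Next I would identify $T_f$ with $S$. Applying $S$ on the left of $A_{h^{-1}}T_f=I$ gives $T_f=SA_{h^{-1}}T_f=S$, which is \eqref{eq:projection-produces-faber}. Alternatively, Proposition~\ref{prop:faber-bounded-right-inverse} could be invoked once boundedness is known, and two-sided invertibility again forces the right inverse to equal $S$.

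It remains to show that $\Faber_f^+$ is an isomorphism onto $\BMOA(\Omega_+)$. By definition, $C_g:\BMOA(\Omega_+)\to\BMOA(\D)$ is an isometric isomorphism when $\BMOA(\Omega_+)$ carries the pulled-back norm. Hence $\Faber_f^+=C_g^{-1}S$ is a composition of two bounded isomorphisms, so it is a bounded bijection with bounded inverse $A_{h^{-1}}C_g$. All identities are taken modulo constants, which is consistent because $\Faber_f^+1=1$ and $S$ preserves constants.

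The main obstacle is the first step: the Faber integral has to make sense before the machinery can be applied, and that requires rectifiability. Rectifiability is not assumed here; it is deduced from invertibility through the logarithmic-jump argument. Once it is available, the weak-star extension in Proposition~\ref{prop:lower-bound-produces-faber} does the real work, and the rest is formal algebra of inverses.
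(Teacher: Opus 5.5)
Your proposal is correct and follows essentially the paper's proof. You convert invertibility into the lower bound $\|A_{h^{-1}}\psi\|\ge\|S\|^{-1}\|\psi\|$, invoke Proposition~\ref{prop:lower-bound-produces-faber}, multiply $A_{h^{-1}}C_g\Faber_f^+=I$ by $S$, and write $\Faber_f^+=C_g^{-1}S$; the extra step deriving rectifiability from Theorem~\ref{thm:rectifiability-discrete} is valid, though the paper does not need it because rectifiability is a standing assumption of that section.
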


\begin{proof}
Bounded invertibility of $A_{h^{-1}}$ gives
\[
 \|A_{h^{-1}}\psi\|_{\BMOA}
 \ge\|S\|^{-1}\|\psi\|_{\BMOA}.
\]
Proposition~\ref{prop:lower-bound-produces-faber}, whose proof already
includes the polynomial approximation and the endpoint passage, therefore
shows that the classical Faber operator is bounded and that
$A_{h^{-1}}C_g\Faber_f^+=I$. Multiplication by $S$ gives
\eqref{eq:projection-produces-faber} on BMOA modulo constants.
Finally $\Faber_f^+=C_g^{-1}S$ is a composition of bounded
isomorphisms. This proves the assertion without repeating the
weak-star approximation or identifying unnormalized point values of
quotient elements.
\end{proof}

\begin{theorem}\label{thm:faber-projection-equivalence}
Assume that $\Gamma$ is rectifiable and $h\in\SQS(\T)$. Then the following are equivalent:
\begin{enumerate}[label=\textup{(\roman*)}]
\item $A_h$ is a bounded isomorphism;
\item $A_{h^{-1}}$ is a bounded isomorphism;
\item $\Faber_f^{+}:\BMOA(\D)\to\BMOA(\Omega_{+})$ is a bounded isomorphism.
\end{enumerate}
When these conditions hold,
\begin{equation}\label{eq:faber-projection-identity}
 \boxed{\quad C_g\Faber_f^{+}=(A_{h^{-1}})^{-1}.\quad}
\end{equation}
\end{theorem}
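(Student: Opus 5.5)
The plan is to assemble the equivalences from results already established, so that no new estimates are needed. The first step is the equivalence (i)$\Leftrightarrow$(ii), which is exactly Proposition~\ref{prop:inverse-symmetry}; it uses only $h\in\SQS(\T)$.

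For (ii)$\Rightarrow$(iii), assume $A_{h^{-1}}$ is a bounded isomorphism. Rectifiability is part of the standing hypothesis, so Proposition~\ref{prop:projection-inverse-produces-faber} applies directly. It shows that $\Faber_f^+:\BMOA(\D)\to\BMOA(\Omega_+)$ is bounded, that $C_g\Faber_f^+=(A_{h^{-1}})^{-1}$ on BMOA modulo constants, and that $\Faber_f^+=C_g^{-1}(A_{h^{-1}})^{-1}$ is a bounded isomorphism. Here $C_g:\BMOA(\Omega_+)\to\BMOA(\D)$ is an isometric isomorphism by the very definition of $\BMOA(\Omega_+)$. This step also yields the boxed identity \eqref{eq:faber-projection-identity}.

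For (iii)$\Rightarrow$(ii), assume $\Faber_f^+$ is a bounded isomorphism and set $T_f=C_g\Faber_f^+$. Then $T_f$ is a bounded isomorphism of $\Bplus$ onto itself. Proposition~\ref{prop:faber-bounded-right-inverse}, which needs only boundedness of $\Faber_f^+$ together with $h\in\SQS(\T)$ for the weak-star continuity of $A_{h^{-1}}$, gives $A_{h^{-1}}T_f=I_{\Bplus}$. Since $T_f$ is bijective, I multiply on the right by $T_f^{-1}$ to get $A_{h^{-1}}=T_f^{-1}$. Hence $A_{h^{-1}}$ is a bounded isomorphism with inverse $T_f$, and the boxed identity follows again.

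An alternative route is available. Boundedness of $\Faber_f^+$ alone already forces chord-arc by Theorem~\ref{thm:bounded-faber-implies-chord-arc}, after which Theorem~\ref{thm:chordarc-implies-projection-direct} gives (i) and (ii). This shows that even the weaker assumption of mere boundedness in (iii) suffices.

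The only delicate point is the meaning of the identities modulo constants. $\Faber_f^+$ preserves constants and $A_{h^{-1}}$ maps constants to constants, so all the operator identities descend to the quotient. I also need the weak-star passage inside Proposition~\ref{prop:faber-bounded-right-inverse} to be legitimate under the present hypotheses: rectifiability makes $K_w\in H^1(\Ds)$, and $h\in\SQS(\T)$ makes Lemma~\ref{lem:weak-star}(c) applicable. Both conditions hold here, so I expect no real obstacle beyond this bookkeeping.
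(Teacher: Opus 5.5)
Your proposal is correct and follows the paper's proof step for step: (i)$\Leftrightarrow$(ii) by Proposition~\ref{prop:inverse-symmetry}, (ii)$\Rightarrow$(iii) together with the boxed identity by Proposition~\ref{prop:projection-inverse-produces-faber}, and (iii)$\Rightarrow$(ii) from the right-inverse identity $A_{h^{-1}}T_f=I$ of Proposition~\ref{prop:faber-bounded-right-inverse}. Right-multiplying by $T_f^{-1}$ is just a compressed form of the paper's injectivity argument, which writes $\psi=T_f\varphi$ using surjectivity of $T_f$; your alternative route through the chord-arc condition is also valid within the paper.
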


\begin{proof}
Proposition~\ref{prop:inverse-symmetry} gives (i)$\Leftrightarrow$(ii).  Proposition~\ref{prop:projection-inverse-produces-faber} gives (ii)$\Rightarrow$(iii) together with the identity \eqref{eq:faber-projection-identity}.

Assume (iii) and set $T_f=C_g\Faber_f^+$.  Then $T_f$ is onto $\Bplus$.  Proposition~\ref{prop:faber-bounded-right-inverse} gives
\[
 A_{h^{-1}}T_f=I.
\]
To prove injectivity of $A_{h^{-1}}$, let $\psi\in \Bplus$ satisfy $A_{h^{-1}}\psi=0$.  Since $T_f$ is onto, there is $\varphi\in \Bplus$ with $\psi=T_f\varphi$.  Then
\[
 0=A_{h^{-1}}\psi=A_{h^{-1}}T_f\varphi=\varphi,
\]
so $\psi=0$.  Thus $A_{h^{-1}}$ is bijective.  Its inverse is necessarily the already constructed right inverse $T_f$, which proves (ii) and \eqref{eq:faber-projection-identity}.
\end{proof}

\begin{proof}[Proof of Theorem~\ref{thm:Faber-characterization-intro}]
If $\Faber_f^+$ is bounded, Theorem~\ref{thm:bounded-faber-implies-chord-arc}
proves that $\Gamma$ is chord-arc under the sole initial assumption
that it is a rectifiable quasicircle. Thus \textup{(ii)} implies
\textup{(i)} without any prior strong quasisymmetry assumption.

If $\Gamma$ is chord-arc, the argument leading to
\eqref{eq:h-beta-alpha} gives $h\in\SQS(\T)$. Theorem~\ref{thm:main-characterization}
then gives bounded invertibility of $A_h$ and $A_{h^{-1}}$.
We may now apply Theorem~\ref{thm:faber-projection-equivalence} to obtain
that $\Faber_f^+$ is a bounded isomorphism and
\[
 C_g\Faber_f^+=(A_{h^{-1}})^{-1}.
\]
This proves \textup{(i)}$\Rightarrow$\textup{(iii)} and the additional
assertions. Finally, \textup{(iii)} implies \textup{(ii)} by definition.
All three conditions are therefore equivalent.
\end{proof}

\end{document}